\documentclass[onefignum,onetabnum]{siamart190516}

\usepackage[T1]{fontenc}

\allowdisplaybreaks

\usepackage{braket,amsfonts}
\usepackage{amsmath,amssymb,amsxtra}
\usepackage{bm,mathrsfs,marvosym,eurosym}
\usepackage{mathtools,enumerate}
\usepackage{caption}
\usepackage{booktabs}
\usepackage{threeparttable}
\usepackage{comment}
\usepackage{nccmath}
\usepackage{bbm}
\usepackage{array}
\usepackage{multirow}
\usepackage{subcaption} 
\usepackage[normalem]{ulem}
\usepackage{upgreek}
\usepackage{tikz}
\usetikzlibrary{arrows,calc,decorations.pathreplacing}
\usetikzlibrary{arrows.meta, positioning}
\usepackage{graphicx}
\usepackage{epstopdf}
\usepackage{algorithmic}
\usepackage{hyperref}

\def\0{{\textbf{0}}}

\def\gl{\gamma_\mathrm{lin}}

\def \fy{\varphi}
\def\<{{\langle }}
\def\>{{\rangle }}

\def\II{(\Omega)}

\definecolor{darkred}{RGB}{200,0,0}
\definecolor{darkblue}{RGB}{0,0,200}
\definecolor{darkgreen}{RGB}{0,160,0}

\usepackage{lipsum}

\newsiamremark{remark}{Remark}
\newsiamremark{hypothesis}{Hypothesis}
\crefname{hypothesis}{Hypothesis}{Hypotheses}
\newsiamthm{claim}{Claim}

\newcommand{\Tau}{\bm{\tau}}

\newtheorem{assumption}{\textit{Assumption}}

\makeatletter
\newcommand*{\addFileDependency}[1]{
  \typeout{(#1)}
  \@addtofilelist{#1}
  \IfFileExists{#1}{}{\typeout{No file #1.}}
}
\makeatother

\title{Convergence analysis of a parareal algorithm with \\
multistep fine propagator\thanks{The work of Z. Zhou is supported by by National Natural Science Foundation of China (Project 12422117),
Hong Kong Research Grants Council (15302323) and an internal grant of Hong Kong Polytechnic University (Project ID: P0053938, Work Programme: 4-ZZVA).}}
\author{ 
Georgios Akrivis\thanks{Department of Computer Science and Engineering, University of Ioannina, 451 10 Ioannina,
Greece.  (\texttt{akrivis@cse.uoi.gr}).}
\and Qingle Lin\thanks{Department of Applied Mathematics, The Hong Kong Polytechnic University, Kowloon, Hong Kong, P.R. China (\texttt{qingle.lin@connect.polyu.hk}, \texttt{zhizhou@polyu.edu.hk})}
\and Zhi Zhou\footnotemark[3] 
}

\headers{Parareal with multistep FP}{G. Akrivis, Q. Lin, and Z. Zhou}

\def\e{{\rm e}}

\ifpdf
\hypersetup{
  pdftitle={Convergence analysis of a parareal algorithm with multistep fine propagator},
  pdfauthor={Georgios Akrivis, Qingle Lin, Zhi Zhou}
}
\fi

\graphicspath{{pics/}}
\begin{document}

\maketitle
 
\begin{abstract}
The parareal algorithm is a powerful parallel-in-time integration method that accelerates the numerical solution of evolution equations
by iteratively combining a fine propagator and a coarse propagator. Although the convergence of the parareal algorithm has been extensively 
studied, most existing analyses assume that the fine propagator is either an exact solver or a single-step method. In this paper, we construct 
and analyze a parareal algorithm for solving parabolic equations, where the fine propagator is based on the two-step backward 
differentiation formula (BDF2), while the coarse propagator remains a single-step method. We propose a novel approach to design an effective 
correction for the initialization steps and establish linear convergence of the iteration. Numerical results fully support the theoretical
findings, show clear improvements over existing multistep parareal strategies, and indicate that the proposed approach extends effectively to higher-order BDF methods and to nonlinear problems.
\end{abstract}

\begin{keywords}
parareal methods, 
linear multistep method,
backward differentiation formula, 
linear convergence,  
correction for initialization
\end{keywords}


\section{Introduction}
Parallel-in-time (PinT) methods have been an active area of research for over 60 years, dating back to the pioneering work of Nievergelt in 1964~\cite{nievergelt1964parallel}.
Over the decades, a wide variety of algorithms have been developed to address the challenges of solving time-dependent problems more efficiently. Representative works include 
parareal algorithms  \cite{Bal:2005, GanderJiang:SISC2013, GanderVandewalle:2007,
MadayTurinici:2002,WuZhou:2015}, Laplace transform based methods \cite{Sheen:2013, McLeanSloanThomee:2006,  SheenSloanThomee:2000,Weideman:2007}, diagonalization based methods (ParaDiag) \cite{McDonald:2018,GanderWu:2019,Banjai:2012,GanderHalpern:2017,WuZhouZhou:2022,WuZhou:2021,MR2385067}, space-time multigrid methods
\cite{Hackbusch:book, LubichOstermann:1987, Vandewalle:1992} and so on. For a broader overview, we refer readers to the comprehensive review in \cite{Gander:2015,OngSchroder:2020,gander2025time} and references
therein. These methods are particularly advantageous for solving partial differential equations (PDEs) over long time intervals, where traditional sequential algorithms often become computationally infeasible.  

Among the various PinT approaches, the parareal algorithm \cite{lions2001resolution} has garnered significant attention due to its simplicity, flexibility, and demonstrated success across numerous applications. The parareal algorithm divides the time domain into $N$ subintervals, enabling the concurrent solution of $N$ subproblems using existing numerical methods. The algorithm operates iteratively, combining a fast but low-accuracy  coarse propagator (CP), which is applied sequentially, with a high-accuracy fine propagator (FP), which is applied in parallel. In each iteration, the CP generates an initial approximation, which the FP refines through parallel corrections, gradually improving the solution over successive iterations.

Notably, most existing studies on parareal development and convergence assume the FP is either an exact solver \cite{GanderVandewalle:2007} or a single-step method \cite{gander2023unified,Dobrev:2017,WuZhou:2015}. However, linear multistep methods, such as the backward differentiation formula (BDF) methods, offer significant advantages for solving PDEs. These methods achieve higher accuracy with fewer function evaluations compared to single-step methods and can efficiently handle implicit formulations, which are crucial for solving parabolic equations. Importantly, they do not suffer from order reduction, a common issue with high-order single-step methods when solving PDEs; see, e.g., \cite{MR875165,MR1153166,MR4316580,MR1098868} and Crouzeix’s thesis \cite{Crouzeix:1975}. However, multistep schemes require multiple previous approximations for initialization, making their integration into the parareal algorithm more complicated. To ensure the convergence, it is crucial to  design proper corrections for the initialization steps within each time window.

Only a few works address parareal-type temporal parallelism with linear multistep methods. In \cite{falgout2017multigrid,falgout2019parallel}, the authors introduce multigrid reduction in time (MGRIT) and incorporate BDF methods to achieve temporal parallelism. A key idea is to group the unknowns into packets containing as many time levels as the BDF order. This “packetization” reformulates the linear multistep scheme as an equivalent single-step method, which can then be incorporated straightforwardly into the MGRIT framework. However, the resulting approach can become unstable when high-order BDF schemes are used, as discussed in \cite[Section 4.1]{falgout2019parallel}. 
In \cite{ait2018multi}, Ait-Ameur, Maday and Tajchman propose a variant of the parareal algorithm that successfully incorporates a second-order BDF method (BDF2) as the FP, demonstrating how proper initialization corrections preserve both accuracy and convergence. 
Very recently, a rigorous convergence analysis has been established in \cite{ait2024multi}, providing a state-of-the-art contribution to the field.
However, that analysis is restricted to ordinary differential equations~(ODEs) and relies on consistency assumptions that are not generally valid for PDE flows. 
In particular, \cite[Assumption (H)(7)]{ait2024multi} requires (for a nonautonomous system) that the coarse propagator satisfies
\begin{equation*}
\|G_{\bm{\tau}}(u)-E_{\bm{\tau}}(u)\|\leqslant C\bm{\tau}\|u\|,
\end{equation*}
in some norm, where $G_{\bm{\tau}}$ denotes the coarse propagator and $E_{\bm{\tau}}$ the exact evolution operator over one step.
Such a bound is typically violated even for linear parabolic equations. As an illustration, consider the model problem $u' + Au = 0$ with $A$ an elliptic operator (so that $E_{\bm{\tau}}=\e^{-\bm{\tau} A}$).  For parabolic problems, sharp time-discretization error estimates depend on higher (graph) norms of the initial data, e.g., $\|Au\|$ (or, more generally, $\|A^s u\|$), rather than on $\|u\|$ alone; see \cite[Theorems~3.1 and~3.2]{Thomee:2006}. 
Accordingly, one typically obtains bounds of the form
\begin{equation*}
\|G_{\bm{\tau}}u - E_{\bm{\tau}}u\|\leqslant  C\bm{\tau}^s \|A^s u\|\qquad \forall\,s\in(0,1],
\end{equation*}
but not an $\mathcal{O}(\bm{\tau})\|u\|$ estimate on the natural state space. Similar restrictive consistency requirements are imposed on the fine propagator and on the error propagators; see \cite[relations (10), (11), (13)]{ait2024multi}.  These observations motivate us to study the parareal with a multistep FP (hereafter, \emph{F-multistep parareal}) applied to parabolic equations under sharp consistency assumptions.

In this work, we establish a sharp linear convergence estimate for the {F-multistep parareal} applied to linear parabolic equations, using BDF2 as the FP. 
Motivated by this estimate, we propose a more principled update scheme for the auxiliary variables introduced in the F-multistep parareal framework. 
The resulting method has the same computational cost as the approach in \cite{ait2024multi}, but converges substantially faster. 
Moreover, we prove that, as the coarsening factor $J\to\infty$, the convergence factor of the F-multistep parareal method approaches that of the plain parareal algorithm with the same single-step CP and an exact FP. 
We present several numerical experiments on both linear and nonlinear problems to demonstrate the effectiveness of the proposed approach. 
The results show that the new update scheme significantly improves performance compared with the one proposed in \cite{ait2024multi}. Finally, our analysis provides a foundation for future work on higher-order multistep FPs and on nonlinear problems \cite{linear_conv}; consistent with this, our experiments indicate that the proposed update performs well in both settings.

The paper is organized as follows. In Section \ref{sec:multistep parareal}, we describe the parareal algorithm and its multistep 
variant, followed by the introduction of our proposed update scheme. In Section \ref{sec:conv}, we derive a convergence estimate for the 
F-multistep parareal method when the stability function of the CPs is invertible and illustrate and compare our proposed scheme with the 
original one using two examples. We then demonstrate the connection between the F-multistep parareal and the plain parareal when the coarsening 
factor $J$ is large in Sections \ref{subsec:exam_inv} and \ref{sec:connection}. In Section \ref{sec:numerical}, we illustrate the F-multistep parareal on a linear parabolic problem and a semilinear 
parabolic problem. Throughout, let
$A$ be a positive definite, self-adjoint, linear operator on a Hilbert space $(H,(\cdot, \cdot))$ with domain $D(A)$, and a compact inverse 
$A^{-1}$. The notation  $(\cdot,\cdot)$ denotes the inner product of the $H$ norm.

\section{Multistep variant of the parareal algorithm}\label{sec:multistep parareal}
Let $T >0$ be a fixed terminal time. Let $f:[0,T] \rightarrow H$ be a given forcing term.
Let $\{(\lambda_p, \varphi_p)\}_{p=1}^\infty$ denote the eigenpairs of $A$ with $(\varphi_p)_{p=1}^\infty$ forming an orthonormal basis of $H$. 
For a given initial value $u_0\in H$, we consider the following initial value problem for $u \in C((0,T];D(A))\cap C([0,T];H)$:
\begin{equation}\label{eqn:pde}
\left \{
\begin{split}
&u' (t)+ A u(t) = f(t), \quad 0<t<T,\\
&u(0)=u_0 .
\end{split}
\right .
\end{equation}

Let $N,J,N_c \in \mathbb{N}$ be such that $N = N_c J$. To discretize problem \eqref{eqn:pde} in time by a parareal algorithm, we consider two 
uniform partitions of the interval $[0,T],$ namely, a coarse one, $\Tau_n = n \Tau, n=0,\dotsc,N_c,$ with the coarse time step $\Tau = 
T/N_c,$ and a fine one, $t_n = n\tau, n=0,\dotsc,N,$ with the fine time step $\tau = T/N.$ Obviously, $\Tau = J\tau.$

Now we present the parareal algorithm for the initial value problem \eqref{eqn:pde}. Typically, the CP $G$ is an 
inexpensive, low-order numerical method, whereas the FP $F$ is a high-order but time-consuming integrator. In most work on 
the parareal algorithm, both $G$ and $F$ are single-step schemes. Given the initial data $v\in H$ and $f\in C([0,T];H)$, the CP, denoted by $ 
G(t+\bm{\tau},t,v)$, evolves the initial state $v$ from time $t$ to $t+\bm{\tau}$. Similarly, the FP, represented by $F(t+\tau , t,v)$, 
evolves a starting value $v$ from time $t$ to $t + \tau $. For details on the development, analysis, and implementation of the parareal algorithm, we refer to \cite{gander2025time}.

For linear parabolic problems, the study of the parareal algorithm is well established; see, e.g., \cite{GanderVandewalle:2007,WuZhou:2015}. With the exact solver as the FP and the stability function $R$ of  the CP, we define the convergence function $\gamma$ and the convergence factor $\gl$ by
\begin{equation}\label{eqn:gamma}
\gamma(s)=\frac{\e^{-s}-R(s)}{1-|R(s)|} \quad\mbox{and}\quad \gl =\sup_{s\in \mathbb{R}^+} |\gamma(s)|.
\end{equation}

\subsection{Parareal algorithm with multistep FPs}

Next, we discuss the development of the multistep variant of the parareal algorithm. Here, we adopt the notation from \cite{ait2024multi}, 
where Ait-Ameur and Maday proposed a multistep version of the parareal algorithm and established its convergence for nonlinear ODEs. Unlike single-step methods, linear multistep methods require multiple starting values. Let $F$ denote a $q$-step FP, such
that for any given time points $t_1$ and $t_2$, and $q$ starting approximations $w^1, w^2, \dotsc, w^q \in H$ at time levels $t_2 - (q-
1)\tau,t_2 -(q-2)\tau,\ldots, t_2$, the operator $F(t_1, t_2, w^1, \dots, w^q)$ propagates these approximations to time $t_1$. Here, $\tau $ 
represents the time step of the fine propagator $F$.

Throughout, we assume that the CPs $G$ are single-step methods. Specifically, we shall develop CPs $G_{i}$ with $i=1,\dotsc,q$, where $G_i$ is 
used to update the correction of initialization at $q$ fine time grid points $\Tau_n - (i-1)\tau, i=q-1,\dotsc,0$. In particular, for any time 
grid $t$ and function $w \in H$, the operator ${G}_i(t, w)$ propagates a single starting value $w$ at time $t$ to time $t+\bm{\tau} - i\tau, i=q-1,\dotsc,0$.

Then, we introduce the following multistep variant of the parareal algorithm. In particular, we set $U_n^k := U_{n,0}^k$.
\begin{algorithm}[htbp!]
\center
\caption{F-multistep parareal algorithm}
\begin{algorithmic}[1]\label{alg:m_para}
\STATE 
\textbf{Initialization}:
Compute $U^0_{n+1,0} = G_{1}( \Tau_{n}, U^0_{n})$ with $U_{0}^0 =u_0$, $n =0,1,\dotsc,N_c - 1$.

\STATE Set $U_{n+1,-i}^0 = U_{n+1}^0$ for $0\leqslant n \leqslant N_c -1$ and $i=1,\dotsc,q-1$.

\STATE Initialize with starting approximations $U_{0,-i}^0=u_{-i}, i=1,\dotsc,q-1$.
\FOR {$k=0,1,\ldots,K$}
\STATE \textbf{Parfor}: On each subinterval $[\Tau_n,\Tau_{n+1}]$, sequentially compute for $i=q-1,\dots,1,0$:
\begin{equation*}
v_{n,-i} = F(\Tau_{n+1}-i\tau ,\Tau_n, U_{n, -(q-1)}^{k},\dots,U_n^k).
\end{equation*}
\vskip5pt
\STATE Perform sequential corrections, i.e., compute $U^{k+1}_{n+1,-i}$ for $i=0,1,\dotsc,q-1$ by
\begin{equation}\label{eqn:multi-parareal}
U^{k+1}_{n+1,-i} = G_{i+1}( \Tau_{n}, U^{k+1}_{n})  + v_{n,-i} -  G_{i+1}(\Tau_n, U^{k}_{n}), 
\end{equation}
with $U^{k+1}_{0,-i} = u_{-i}$ for $i=0,\dotsc,q-1$ and $n =  0,1,\dotsc,N_c - 1$.\vskip5pt
\STATE Check the stopping criterion.
\ENDFOR
\end{algorithmic}
\end{algorithm}

In \cite{ait2024multi,ait2018multi},  the CPs $G_i$ are chosen as 
\begin{equation}\label{eqn:CP-I}
\text{Type (I):}\quad  {G}_i(t,w) = {G}({t+\bm{\tau},t,w}),\quad \text{for}~~ i = 1, \dots, q,
\end{equation}
where $G$ is a given single-step CP. Then, each iteration in Algorithm \ref{alg:m_para} can be written as 
\begin{equation*}
\begin{cases}
\begin{aligned}
U_{n+1}^{k+1} &= G(\Tau_{n+1}, \Tau_n, U_n^{k+1}) - G(\Tau_{n+1}, \Tau_n, U_n^{k}) \\
&\quad + F(\Tau_{n+1}, \Tau_n, U_{n,-(q-1)}^{k}, \dots, U_{n,0}^{k}), && 0 \leqslant n \leqslant N_c - 1, \\[6pt]
U_{n+1,-i}^{k+1} &= F(\Tau_{n+1} - i\tau , T_n, U_{n,-(q-1)}^{k}, \dots, U_{n,0}^{k}) + U_{n+1}^{k+1} &&1 \leqslant i \leqslant q-1, \\
&\quad - F(\Tau_{n+1}, T_n, U_{n,-(q-1)}^{k}, \dots, U_{n,0}^{k}), &&  0 \leqslant n \leqslant N_c - 1.
\end{aligned}
\end{cases}
\end{equation*}
Notably, in this algorithm, the term $G(\Tau_{n+1}, \Tau_n, U_n^{k+1}) - G(\Tau_{n+1}, \Tau_n, U_n^k)$ acts as a correction for all subsequent 
values $U_{n+1,-i}^{k+1}$, $i=0 ,\dots,q-1$. However, in Section \ref{subsec:exam_inv}, we show that this correction introduces extra consistency 
error, which affects the convergence rate. This effect becomes more pronounced when high-resolution CPs are used.

To improve the performance of the parareal algorithm, we propose a novel type of CPs:
\begin{equation}\label{eqn:CP-II}
\text{Type (II):}\quad {G}_i(t,w) = {G}(t+\bm{\tau} - (i-1)\tau , t, w),\quad \text{for}~~ i = 1, \dots, q,
\end{equation}
where ${G}_i$ is a single-step scheme that solves  equation \eqref{eqn:pde} at $t+\bm{\tau} - (i-1)\tau $, starting from the initial data 
$u(t) = w$. Unlike Type (I) CPs, the proposed Type (II) CPs require performing $q$ corrections, $G_i(\Tau_n, U_n^{k+1}) - G_i(\Tau_n, U_n^k)$ 
for $i = 1, 2, \dots, q$, within each coarse time interval. Nevertheless, these corrections can still be computed in parallel, preserving the 
efficiency of the method.

In Section \ref{sec:conv}, we present a detailed convergence analysis that demonstrates that the proposed Type (II) CPs outperform Type (I) 
CPs. Both types of F-multistep parareal schemes satisfy the consistency condition of the block iteration \cite{gander2023unified}, ensuring that 
the parareal solution satisfies $U_n^k = U_n$ when $k \geqslant n$. This property, known as the finite iteration convergence of parareal, guarantees the 
reliability of the approach. 

In this work, we employ BDF$q$ methods, one of the most widely used classes of linear multistep methods, as the FPs.  By \cite[Lemma 10.3]
{Thomee:2006}, we have the  explicit formula of the $q$-step method FP:
\begin{align}
&F \left( \Tau_{0}+n\tau,\Tau_{0}+(q-1)\tau,u_{0},\dotsc,u_{q-1} \right) \nonumber \\
&= \sum_{s=0}^{q-1} \beta_{ns} ( \tau A) u_{s} + \tau \sum_{j=q}^{n} \beta_{n-j} ( \tau A ) f( \Tau_{0}+j \tau) \nonumber \\
&=: \sum_{s=0}^{q-1} F_{{s+1}}^{\left( n-q+1 \right)}(\tau A) u_{s} + N^{( n-q+1)}( f)( \Tau_{0}+(q-1)\tau ),\label{eqn:fp}
\end{align}
where the \(\beta_j(\lambda)\) and \(\beta_{ns}(\lambda)\) are defined by
\begin{equation}\label{eqn:beta}
\widetilde{\beta}(\lambda)=\sum_{j = 0}^{\infty}\beta_j(\lambda)\zeta^j := (\widetilde{\alpha}(\zeta)+\lambda)^{-1},\, \beta_{ns}(\lambda)=-\sum_{j = q - s}^{q}\beta_{n - s - j}(\lambda)\alpha_j,
\end{equation}
with $\tilde{\alpha} ( \zeta) = \zeta^q \alpha (1/ \zeta)$, where $\alpha_j$ are the coefficients 
of the generating function $\alpha (\zeta)= \sum_{j=1}^q\frac{1}{j}(1-\zeta)^j=\sum_{j=0}^q\alpha_j\zeta^j$
of the $q$-step BDF method. The numerical differentiation is given by $ {\partial_{q}} U^{n}=\frac{1}{\tau} \sum_{j=0}^{q} \alpha_{j} U^{n-j}$.

We consider the following explicit formulas for the CPs for $i=1,\dots,q$,
\begin{equation}
\begin{aligned}
\text{Type (I):} \quad {G}_i(t,u) = R(\bm{\tau} A)u; \qquad
\text{Type (II):} \quad  {G}_i(t,u) = R((\bm{\tau} - (i-1)\tau )A)u,
\end{aligned}
\label{eqn:cp}
\end{equation}
where $R(s)$ is a stability function, an approximation to $\e^{-s}$. Then, we impose some assumptions on $R$ and $\gamma(s)$. 
\begin{assumption}\label{assum:1}
The rational function $R$ in~\eqref{eqn:cp} and the convergence function $\gamma(s)$ in~\eqref{eqn:gamma} satisfy the following conditions:
\begin{itemize}
    \item[(i)] $R(0)=1$ and $R'(0)=-1$. $R(s) \in (0,1)$ for all $s\in \mathbb{R}^+$.
    \item[(ii)]{ $|\gamma(s)|$ achieves its unique maximum at $s_0$ when $s \in [0,\infty)$ and $s_0 \neq 0$. }
\end{itemize}
\end{assumption}
We need this assumption to analyze the connection between the F-multistep parareal and the plain parareal in Theorem~\ref{thm:conv1}. Note that
the CPs only propagate the homogeneous part of the differential equation in \eqref{eqn:pde}, as the inhomogeneous part cancels out in the 
error analysis and does not influence the convergence of the parareal for the linear initial value problem \eqref{eqn:pde}. 
Since the homogeneous equation is autonomous, the variable $t$  does not affect ${G}_i(t,u)$,  and we simplify the 
notation 
as $G_i(u) = {G}_i(t,u)$. Moreover, we introduce $R_i (s)= R(s)$ for Type (I) and $R_i (s) = R({(J-(i-1))}/{J}s)$ for Type (II), for $s\in 
\mathbb{R}^+$ and $i=1,\dotsc,q$.

\section{Convergence analysis}\label{sec:conv} 
In this section, we analyze the convergence of the F-multistep parareal Algorithm \ref{alg:m_para} for the linear parabolic 
problem \eqref{eqn:pde} with nonsmooth data $u_0 \in H$. We assume that the FP is the BDF2: 
\begin{align*}
F\big( \Tau_{n+1},\Tau_{n},U_{n,-1}^{k},U_{n}^{k} \big) 
&=F_{1}^{( J )}( \tau A ) U_{n,-1}^{k}+F_{2}^{( J )}( \tau A ) U_{n}^{k}+N^{(J)}(f)(\Tau_n),\\
F\big( \Tau_{n+1}-\tau,\Tau_{n},U_{n,-1}^{k},U_{n}^{k} \big) 
&=F_{1}^{( J-1)}( \tau A) U_{n,-1}^{k}+F_{2}^{( J-1)}( \tau A)U_{n}^{k}+N^{(J-1)}(f)(\Tau_n), 
\end{align*}
where the operators $F_1^{(J)}$, $F_2^{(J)}$ and $N^{(J)}(f)$ can be computed through \eqref{eqn:fp} and \eqref{eqn:beta}. 

\subsection{Recursion derivation}\label{sec:recursion_1}
Next, we derive a recursive relation for the error of the parareal Algorithm \ref{alg:m_para}. Note that the $k$-th iteration of the algorithm
can be written as:
{\small \begin{equation}\label{eqn:m-para}
\left\{
\begin{split}
U_{n+1}^{k+1}&=G_{1}\big( U_{n}^{k+1} \big) +F_{1}^{(J)}\big( U_{n,-1}^{k} \big) +F_{2}^{(J)}\big( U_{n}^{k} \big)+N^{(J)}(f)(\Tau_n) -G_{1}\big( U_{n}^{k} \big), \\
U_{n+1,-1}^{k+1}&=G_{2}\big( U_{n}^{k+1} \big) +F_{1}^{(J-1)}\big( U_{n,-1}^{k} \big) +F_{2}^{(J-1)}\big( U_{n}^{k} \big) +N^{(J-1)}(f)(\Tau_n) -G_{2}\big( U_{n}^{k} \big).
\end{split}
\right.
\end{equation}}
Meanwhile, the numerical solution of the BDF2 scheme satisfies
\begin{equation*}
\left\{
\begin{aligned}
U_{n+1} &= F_{1}^{(J)}\big( U_{n,-1} \big) + F_{2}^{(J)}\big( U_{n} \big)+N^{(J)}(f)(\Tau_n) , \\
U_{n+1,-1} &= F_{1}^{(J-1)}\big( U_{n,-1} \big) + F_{2}^{(J-1)}\big( U_{n} \big)+N^{(J-1)}(f)(\Tau_n) .
\end{aligned}
\right.
\end{equation*} 
Thus, the errors $E_n^k := U_n^k - U_n$ and $E_{n,-1}^k := U_{n,-1}^k - U_{n,-1}$ satisfy the relation
\begin{equation}\label{eqn:para_err}
\left\{
\begin{aligned}
E_{n+1}^{k+1}&=G_{1}\big( E_{n}^{k+1} \big) +F_{1}^{(J)}\big( E_{n,-1}^{k} \big) +F_{2}^{(J)}\big( E_{n}^{k} \big) -G_{1}\big( E_{n}^{k} \big), \\
E_{n+1,-1}^{k+1}&=G_{2}\big( E_{n}^{k+1} \big) +F_{1}^{(J-1)}\big( E_{n,-1}^{k} \big) +F_{2}^{(J-1)}\big( E_{n}^{k} \big) -G_{2}\big( E_{n}^{k} \big).
\end{aligned}
\right.
\end{equation}

Throughout, we assume that the stability function $R$ is positive in $\mathbb{R}^+$; thus, $G_1$ and $G_2$ are invertible. 
We multiply the second equation in \eqref{eqn:para_err} by $G_1 G_2^{-1}$ and obtain
\begin{equation*}
G_{1}G_{2}^{-1}( E_{n+1,-1}^{k+1} ) =G_{1}( E_{n}^{k+1} ) +G_{1}G_{2}^{-1}F_{1}^{(J-1)}( E_{n,-1}^{k} ) +G_{1}G_{2}^{-1}F_{2}^{(J-1)}( E_{n}^{k} ) -G_{1}( E_{n}^{k} ).
\end{equation*}
Given that $F_1^{(J-1)},~F_2^{(J-1)}$, and $G_1 G_2^{-1}$ mutually commute, this can be rewritten as 
\begin{equation*}
G_{1}G_{2}^{-1}( E_{n+1,-1}^{k+1} ) =G_{1}( E_{n}^{k+1} )
    +F_{1}^{(J-1)}G_{1}G_{2}^{-1}( E_{n,-1}^{k} ) +F_{2}^{(J-1)}G_{1}G_{2}^{-1}( E_{n}^{k} ) -G_{1}( E_{n}^{k} ).
\end{equation*}
We further introduce a new variable, $ {D}_{n}^{k}=G_{1}G_{2}^{-1}\big( E_{n,-1}^{k} \big) -E_{n}^{k}$, which satisfies
\begin{equation} \label{eqn:g_0}
\begin{split}
 {D}_{n+1}^{k+1} &= \big( F_{1}^{(J-1)}G_{1}G_{2}^{-1}-F_{1}^{(J)} \big) ( E_{n,-1}^{k} ) 
 +\big( F_{2}^{(J-1)}{ G_{1}G_{2}^{-1}}-F_{2}^{(J)} \big) ( E_{n}^{k} )  \\
&= \big( F_{1}^{(J-1)}-F_{1}^{(J)}G_{2}G_{1}^{-1} \big) \big( G_{1}G_{2}^{-1}( E_{n,-1}^{k}) -E_{n}^{k} \big)   \\
&\quad +\big( F_{1}^{(J-1)}+F_{2}^{(J-1)}G_{1}G_{2}^{-1}-F_{1}^{(J)}G_{2}G_{1}^{-1}-F_{2}^{(J)} \big) ( E_{n}^{k} )   \\
& =: \eta_1 {D}_{n}^{k}+ \eta_2 E_{n}^{k}.
\end{split}
\end{equation}
Meanwhile, the error $E_{n+1}^{k+1}$ can also be related with ${D}_n^k$ through
\begin{align*}
E_{n+1}^{k+1} &= G_{1}( E_{n}^{k+1}) + F_{1}^{(J)}G_{2}G_{1}^{-1}\big( G_{1}G_{2}^{-1}( E_{n,-1}^{k}) - E_{n}^{k} \big) \\
 &\quad + \big( F_{1}^{(J)}G_{2}G_{1}^{-1} + F_{2}^{(J)} - G_1 \big) \big( E_{n}^{k} \big) \\
& =: G_{1} E_{n}^{k+1} +\eta_3 D_{n}^{k} + \eta_4 E_{n}^{k}.
\end{align*}
As a result, we obtain the following coupled equations between $E_n^k$ and $ {D}_n^k$:
\begin{equation}\label{eqn:coupled}
\left\{
\begin{aligned}
 {D}_{n+1}^{k+1} &= \eta_{1}  {D}_{n}^{k} + \eta_2 E_{n}^{k}, \\
E_{n+1}^{k+1} &= G_{1} E_{n}^{k+1} + \eta_3 D_{n}^{k} + \eta_4 E_{n}^{k}.
\end{aligned}
\right.
\end{equation}
The benefit of this transformation is that the recursive relations are now entirely formulated on the coarse time grids, simplifying the analysis significantly.

Next, recall that $\varphi_p$ is an eigenfunction of the operator $A $ corresponding to an eigenvalue $\lambda_p$. Let
 $e_{n,p}^k:=(E_{n}^k,\fy_p)$ and $d_{n,p}^k := (D_n^k,\fy_p)$. Testing \eqref{eqn:coupled} by $\fy_p$, we arrive at
 \begin{equation} \label{eqn:coupled_p}
    \left\{
\begin{aligned}
  {d}_{n+1,p}^{k+1}  &= \eta_{1,p} {d}_{n,p}^{k} + \eta_{2,p} e_{n,p}^{k}, \\
e_{n+1,p}^{k+1}  &= G_{1,p} e_{n,p}^{k+1} + \eta_{3,p} d_{n,p}^{k} + \eta_{4,p} e_{n,p}^{k},
\end{aligned}
\right.
\end{equation}
where we let $z_p = \tau  \lambda_p$ and define
\begin{align}
{\eta}_{1,p} &=F_{1}^{(J-1)}(z_{p})-F_{1}^{(J)}(z_{p})G_{2,p} G^{-1}_{1,p},~G_{1,p}=R_1(Jz_p),~G_{2,p}=R_2(Jz_p), \label{eqn:eta_1} \\
{\eta}_{2,{p}} &= F_{1}^{(J-1)}(z_p) + F_{2}^{(J-1)} (z_p)G_{1,p} G_{2,p}^{-1} - F_{1}^{(J)}(z_p) G_{2,p} G_{1,p}^{-1} - F_{2}^{(J)}
(z_p), \nonumber\\
{\eta}_{3,p} &= F_{1}^{(J)}(z_p) G_{2,p} G_{1,p}^{-1},\nonumber\\
{\eta}_{4,p}&= F_{1}^{(J)}(z_p) G_{2,p} G_{1,p}^{-1} + F_{2}^{(J)}(z_p) - G_{1,p}. \label{eqn:g_0_p}
\end{align}

\subsection{Error estimate}\label{ssec:Resursion derivation}
Next, we derive error estimates for the case $k \geqslant 3$. For the initial iterations with $k = 0, 1, 2$, the errors $\|E_n^k\|_H$ 
in \eqref{eqn:coupled_p}  remain bounded. As observed in the numerical experiments in Section~\ref{sec:numerical}, the error 
$\|E_n^k\|_H$ may initially increase (see, e.g., Fig.~\ref{fig:Ex1_nonsmooth}). However, as $k$ increases, the error decay stabilizes.

The following theorem shows a relation between $\max_i |e_{i,p}^{k+1}|$, $\max_i|e_{i,p}^{k-1}|$, and $\max_i |e_{i,p}^{k-3}|$.
\begin{theorem}\label{thm:err_recursion}
For $k\geqslant 3$, let $d_{n+1,p}^{k+1}$ and $e_{n+1,p}^{k+1}$ be as  in  \eqref{eqn:coupled_p}. Let $x_j = \max_{0\leqslant i\leqslant n-k+j}|e_{i,p}^{j}|$ 
and $y_j = \max_{0\leqslant i\leqslant n-k+j} |d_{i,p}^j|$. Then, the following estimates hold:
\begin{equation*} 
\left\{
\begin{split}
x_{k+1} &\leqslant \gamma_{a,p} x_{k-1} +  \gamma_{b,p} x_{k-3} + \gamma_{c,p} y_{k-3}, \\
y_{k-1} &\leqslant  \gamma_{d,p} x_{k-3} +  \gamma_{e,p} y_{k-3},
\end{split}
\right.
\end{equation*}
where the functions $ \gamma_{a,p}$, $ \gamma_{b,p}$, $ \gamma_{c,p}$, $ \gamma_{d,p}$, and $ \gamma_{e,p}$ 
are defined in \eqref{eqn:g_a}, \eqref{eqn:g_b}, \eqref{eqn:g_c}, \eqref{eqn:g_d}, and \eqref{eqn:g_e}, respectively.
\end{theorem}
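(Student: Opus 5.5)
The idea is to work with the scalar recursion \eqref{eqn:coupled_p} for a fixed mode $p$, and to solve the $e$-equation for $e^{k+1}_{n+1,p}$ as a discrete convolution in $n$. By Assumption~\ref{assum:1}, $0<G_{1,p}<1$. Iterating the second equation of \eqref{eqn:coupled_p} in $n$, and using $e^{k+1}_{0,p}=0$ because the initial data are exact, gives
\begin{equation*}
e^{k+1}_{n+1,p}=\sum_{i=0}^{n} G_{1,p}^{\,n-i}\big(\eta_{3,p} d^{k}_{i,p}+\eta_{4,p} e^{k}_{i,p}\big).
\end{equation*}
Taking maxima over the relevant index range and summing the geometric series gives the one-step bound
\begin{equation*}
x_{k+1}\leqslant \frac{|\eta_{3,p}|}{1-G_{1,p}}\,y_k+\frac{|\eta_{4,p}|}{1-G_{1,p}}\,x_k .
\end{equation*}
The index windows $0\leqslant i\leqslant n-k+j$ are built so that the range shrinks by one at each step back in $k$. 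They therefore nest correctly: the quantity at level $j+1$ needs indices up to $n-k+j$ at level $j$. The $d$-equation needs no summation, since $d^{k}_{i+1,p}=\eta_{1,p}d^{k-1}_{i,p}+\eta_{2,p}e^{k-1}_{i,p}$ with $d^{k}_{0,p}=0$. Hence $y_k\leqslant|\eta_{1,p}|y_{k-1}+|\eta_{2,p}|x_{k-1}$.

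Iterating these bounds naively would only give a relation between consecutive levels. The statement instead relates $k+1$, $k-1$ and $k-3$, which suggests eliminating the odd levels by substitution before taking absolute values. The plan is as follows.

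\emph{Step 1.} Write $e^{k+1}$ through $(d^{k},e^{k})$, substitute $d^{k}_{i,p}=\eta_{1,p}d^{k-1}_{i-1,p}+\eta_{2,p}e^{k-1}_{i-1,p}$, and substitute the convolution formula for $e^{k}$ in terms of $(d^{k-1},e^{k-1})$. This expresses $e^{k+1}$ as a double convolution in $(d^{k-1},e^{k-1})$. In that expression the $e^{k-1}$ contributions can be collected into a kernel whose $\ell^1$-norm is $\gamma_{a,p}$. I expect $\gamma_{a,p}$ to be essentially $\big(|\eta_{4,p}|^2/(1-G_{1,p})+|\eta_{3,p}\eta_{2,p}|\big)/(1-G_{1,p})$ or, better, the $\ell^1$-norm of the exact combined kernel, which is sharper than the product of the separate bounds.

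\emph{Step 2.} Treat the remaining $d^{k-1}$ terms by expanding once more: write $d^{k-1}$ through $(d^{k-2},e^{k-2})$, and then $e^{k-2}$ and $d^{k-2}$ through $(d^{k-3},e^{k-3})$. This produces the coefficients $\gamma_{b,p}$ and $\gamma_{c,p}$ multiplying $x_{k-3}$ and $y_{k-3}$.

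\emph{Step 3.} The second inequality, $y_{k-1}\leqslant\gamma_{d,p}x_{k-3}+\gamma_{e,p}y_{k-3}$, follows the same way by expanding $d^{k-1}$ two levels. Explicitly, $d^{k-1}=\eta_1 d^{k-2}+\eta_2 e^{k-2}$, with $d^{k-2}=\eta_1 d^{k-3}+\eta_2 e^{k-3}$ and $e^{k-2}$ given by the convolution. This yields
\begin{equation*}
\gamma_{e,p}=\eta_{1,p}^2+\frac{|\eta_{2,p}\eta_{3,p}|}{1-G_{1,p}},\qquad
\gamma_{d,p}=|\eta_{1,p}\eta_{2,p}|+\frac{|\eta_{2,p}\eta_{4,p}|}{1-G_{1,p}},
\end{equation*}
up to how the absolute values are grouped. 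The definitions \eqref{eqn:g_a}--\eqref{eqn:g_e} would then be stated to match exactly these expressions.

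The main obstacle is bookkeeping. One has to track the index shifts so that every maximum is taken over $0\leqslant i\leqslant n-k+j$ at level $j$. One also has to decide how much cancellation to keep inside the nested convolutions, that is, whether to bound $\sum_i\sum_l G_{1,p}^{\,n-i}G_{1,p}^{\,i-l}$ by the product $(1-G_{1,p})^{-2}$ or to keep the exact kernel. I would first carry out the crude product bounds, which certainly give an inequality of the stated form, and then refine to the sharp kernel sums if the later convergence theorem needs tighter constants. A side check is that $e^{j}_{0,p}=d^{j}_{0,p}=0$ for all $j$, since $U^{k}_{0,-i}=u_{-i}$. This check is needed so that the truncated convolutions have no boundary terms.
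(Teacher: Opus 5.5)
Your plan is correct and follows the paper's proof essentially step for step. It convolves the $e$-equation in $n$, then substitutes the $d$-recursion and the convolution for $e^{k}$ to obtain the kernel $G_{1,p}^m\big((m+1)\eta_{4,p}^2+\eta_{2,p}\eta_{3,p}\big)$ acting on $e^{k-1}$. It then expands the leftover $d^{k-1}$ two levels down to $k-3$, and uses the same two-level expansion for the $d$-inequality. To land exactly on the paper's constants, take $\gamma_{a,p}$ to be the exact $\ell^1$ norm of that combined kernel, since your crude product bound is larger. For $\gamma_{b,p}$ and $\gamma_{c,p}$, use product bounds together with $\sum_m (m+1)x^m\leqslant (1-x)^{-2}$. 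For $\gamma_{d,p}$ and $\gamma_{e,p}$, keep the lag-zero coefficients $\eta_{2,p}(\eta_{1,p}+\eta_{4,p})$ and $\eta_{2,p}\eta_{3,p}+\eta_{1,p}^2$ together before taking absolute values.
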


\begin{proof}
We start from eliminating the index $k+1$ of the right-hand side of the second equation in \eqref{eqn:coupled_p} by iterating itself on $n$,
\begin{equation*}
e_{n+1,p}^{k+1}  = G_{1,p}^{n+1} e_{0,p}^{k+1}  + \sum_{i=0}^{n} G_{1,p}^{i}\big( \eta_{3,p} d_{n-i,p}^{k} + \eta_{4,p} e_{n-i,p}^{k} \big),
\end{equation*}
where $e_{0,p}^{k+1}=(U_{0}^{k+1},\fy_p)-(U_{0},\fy_p)=0$. Then, using the first relation 
in \eqref{eqn:coupled_p}, we eliminate $d_{i,p}^k$ on the right-hand side,
\begin{equation}\label{eqn:k-1}
\begin{split}
e_{n+1,p}^{k+1} &= {\eta}_{4,p}  \sum_{i=0}^{n} G_{1,p}^{i}e_{n-i,p}^{k} + {\eta}_{3,p} \sum_{i=0}^{n} G_{1,p}^{i}d_{n-i,p}^{k} \\
&= {\eta}_{4,p}  \sum_{i=0}^{n} G_{1,p}^{i}e_{n-i,p}^{k} + {\eta}_{3,p}  \sum_{i=0}^{n-1} G_{1,p}^{i}\big( \eta_{2,p} e_{n-i-1,p}^{k-1} + \eta_{1,p}  d_{n-i-1,p}^{k-1} \big),
\end{split}
\end{equation}
where $d_{0,p}^k = 0$.
Our next objective is to further expand $e_{i,p}^k$ on the right-hand side, to express $e_{n+1,p}^{k+1}$ in terms of values from iteration $k-1$. 
Terminating the expansion at this stage and applying the absolute norm would yield a non-sharp error bound. This conclusion is further 
corroborated by the numerical experiments presented in Section \ref{sec:numerical}.
We have the expression for $e_{n-i,p}^k$ by the second relation in \eqref{eqn:coupled_p},
\begin{align*}
e_{n-i,p}^{k} &= G_{1,p} e_{n-1-i,p}^{k} + {\eta}_{3,p} d_{n-1-i,p}^{k-1} + \eta_{4,p} e_{n-1-i,p}^{k-1} \\
&= G_{1,p}^{n-i} e_{0,p}^{k} + \sum_{j=0}^{n-1-i} G_{1,p}^{j} \big( \eta_{4,p} e_{n-1-i-j,p}^{k-1} + \eta_{3,p} d_{n-1-i-j,p}^{k-1} \big)\\
&= \sum_{j=0}^{n-1-i} G_{1,p}^{j} \big( \eta_{4,p} e_{n-1-i-j,p}^{k-1} + \eta_{3,p} d_{n-1-i-j,p}^{k-1} \big),
\end{align*}
where we again applied the fact that $e_{0,p}^k =0$. 
Then, we derive
\begin{equation}\label{eqn:e_d}
\begin{aligned}
e_{n+1,p}^{k+1} &= \eta_{4,p} \sum_{i=0}^{n} G_{1,p}^{i} \sum_{j=0}^{n-1-i} G_{1,p}^{j} \big( \eta_{4,p} e_{n-1-i-j,p}^{k-1} + \eta_{3,p}d_{n-1-i-j,p}^{k-1} \big)  \\
&\quad + {\eta}_{3,p} {\eta}_{2,p} \sum_{i=0}^{n-1} G_{1,p}^{i} e_{n-1-i,p}^{k-1} + {\eta}_{3,p} {\eta}_{1,p} \sum_{i=0}^{n-1} G_{1,p}^{i} d_{n-1-i,p}^{k-1}  \\
&= \sum_{m=0}^{n-1} G_{1,p}^{m} \big( (m+1)  (\eta_{4,p} )^2  + \eta_{3,p}  \eta_{2,p}  \big) e_{n-1-m,p}^{k-1}\\
&\quad + \sum_{m=0}^{n-1} G_{1,p}^{m} \eta_{3,p}  \big( (m+1) \eta_{4,p}  + \eta_{1,p} \big) d_{n-1-m,p}^{k-1}.
\end{aligned}
\end{equation}
Note that the right-hand side of \eqref{eqn:e_d} is only related to the iteration $k-1$. Now, we further expand $d_{n-1-m,p}^{k-1}$ 
to iteration $k-3$ based on \eqref{eqn:coupled_p}: 
\begin{align*}
d_{n-1-m,p}^{k-1} &= \eta_{2,p} e_{n-2-m,p}^{k-2} + \eta_{1,p} d_{n-2-m,p}^{k-2} \\
&= \eta_{2,p} \sum_{j=0}^{n-3-m} G_{1,p}^{j} \big( \eta_{4,p} e_{n-3-m-j,p}^{k-3} + \eta_{3,p} d_{n-3-m-j,p}^{k-3} \big) + \eta_{1,p} d_{n-2-m,p}^{k-2}.
\end{align*}
Then, we further expand $d_{n-2-m,p}^{k-2}$ to eliminate iteration $k-2$ on the right-hand side of the above relation, 
\begin{align*}
d_{n-1-m,p}^{k-1} &= \eta_{2,p} \sum_{j=0}^{n-3-m} G_{1,p}^{j} \big( \eta_{4,p} e_{n-3-m-j,p}^{k-3} +  \eta_{3,p} d_{n-3-m-j,p}^{k-3} \big) \\
&\quad + \eta_{1,p} \eta_{2,p} e_{n-3-m,p}^{k-3} + \eta_{1,p}^{2} d_{n-3-m,p}^{k-3} \\
&= \eta_{2,p} \eta_{4,p} \sum_{j=0}^{n-3-m} G_{1,p}^{j} e_{n-3-m-j,p}^{k-3} + \eta_{1,p} \eta_{2,p} e_{n-3-m,p}^{k-3} \\
&\quad + \eta_{2,p} \eta_{3,p} \sum_{j=0}^{n-3-m} G_{1,p}^{j} d_{n-3-m-j,p}^{k-3} + \eta_{1,p}^{2} d_{n-3-m,p}^{k-3}. 
\end{align*}
Finally, we take the expression of $d_{n-1-m,p}^{k-1}$ back to the expression of  $e_{n+1,p}^{k+1}$ in \eqref{eqn:e_d},  
{\small\begin{equation}\label{eqn:three}
\begin{aligned}
& e_{n+1,p}^{k+1} = \sum_{m=0}^{n-1} G_{1,p}^{m} \big( (m+1) \eta_{4,p}^{2} + \eta_{3,p} \eta_{2,p} \big) e_{n-1-m,p}^{k-1} \\
&\quad + \sum_{m=0}^{n-3} G_{1,p}^{m} \eta_{2,p}  \eta_{3,p} \big( (m+1) \eta_{4,p} + \eta_{1,p} \big) \big(\sum_{j=0}^{n-3-m} \eta_{4,p} G_{1,p}^{j} e_{n-3-m-j,p}^{k-3} + \eta_{1,p} e_{n-3-m,p}^{k-3} \big)   \\
&\quad + \sum_{m=0}^{n-3} G_{1,p}^{m}\eta_{3,p} \big( (m+1) \eta_{4,p} + \eta_{1,p} \big) \big(\sum_{j=0}^{n-3-m} \eta_{2,p} \eta_{3,p} G_{1,p}^{j} d_{n-3-m-j,p}^{k-3} + \eta_{1,p}^{2} d_{n-3-m,p}^{k-3} \big) \\
&=: {\rm I} + {\rm II} + {\rm III}.
\end{aligned}
\end{equation}}

We have decomposed the error $e_{n+1,p}^{k+1}$ into three sums, each analyzed separately. The first term corresponds 
to the error $e_{i,p}^{k-1}$ from iteration $k-1$. The second term involves the error $e_{i,p}^{k-3}$ from iteration $k-3$, 
while the third term accounts for the error $d_{i,p}^{k-3}$ from iteration $k-3$. 

\vspace{5pt}
\textbf{Step (I):} We estimate the first sum ${\rm I}$ in~\eqref{eqn:three}. This sum represents the primary component 
of the error $e_{n+1,p}^{k+1}$, as it incorporates the complete error propagation from the $(k-1)$-th iteration. The sum can be bounded as follows,
\begin{align*}
|{\rm I}|&\leqslant \max_{0\leqslant i\leqslant n-1} |e_{i,p}^{k-1}|\sum_{m=0}^{\infty} |G_{1,p}|^{m}\left| (m+1) \eta_{4,p}^{2} +\eta_{2,p} \eta_{3,p}\right| =: \gamma_{a,p} \max_{0\leqslant i\leqslant n-1} |e_{i,p}^{k-1}|,
\end{align*}
 where the convergence function $\gamma_{a,p}$ is defined as
\begin{equation}\label{eqn:g_a}
\gamma_{a,p} :=\sum_{m=0}^{\infty} |G_{1,p}|^{m}\left| (m+1) \eta_{4,p}^{2} +\eta_{2,p} \eta_{3,p}\right|.
\end{equation}

\textbf{Step (II):} We bound the second sum in \eqref{eqn:three}, 
\begin{align*}
|{\rm II}|&=\left| \eta_{2,p} \eta_{3,p} \eta_{4,p} \sum_{m=0}^{n-3} \sum_{j=0}^{n-3-m} G_{1,p}^{m+j}\left( \left( m+1 \right) \eta_{4,p} +\eta_{1,p} \right) e_{n-3-m-j,p}^{k-3} \right. \\
&\quad \left. + \eta_{1,p} \eta_{2,p} \eta_{3,p} \sum_{m=0}^{n-3} G_{1,p}^{m}\left( \left( m+1 \right) \eta_{4,p} +\eta_{1,p} \right) e_{n-3-m,p}^{k-3} \right| \\
&\leqslant \max_{0\leqslant i\leqslant n-3} |e_{i,p}^{k-3}| \left( |\eta_{2,p}\eta_{3,p} \eta_{4,p} |\sum_{m=0}^{n-3} \sum_{j=0}^{n-3-m} |G_{1,p}|^{m+j}|\left( m+1 \right) \eta_{4,p} +\eta_{1,p} | \right) \\
&\quad + \max_{0\leqslant i \leqslant n-3} |e_{i,p}^{k-3}| \left( |\eta_{1,p}\eta_{2,p} \eta_{3,p} |\sum_{m=0}^{n-3} |G_{1,p}|^{m}|\left( m+1 \right) \eta_{4,p} +\eta_{1,p} | \right).
\end{align*}

Differentiation of the relation $\sum_{m=0}^\infty x^m=1/(1-x)$ yields $\sum_{m=0}^\infty (m+1)x^m=1/(1-x)^2$ and thus
\begin{equation}\label{geom-series}
\sum_{m=0}^n (m+1)x^m\leqslant \frac 1{(1-x)^2},\quad 0\leqslant x<1.
\end{equation}
Since $|G_{1,p}|<1,$ using \eqref{geom-series}, 
we further simplify the sum above
\begin{align*}
  |{\rm II}|  &\leqslant \left( \frac{|\eta_{2,p} \eta_{3,p} \eta_{4,p}^{2} |}{\left( 1-|G_{1,p}| \right)^{3}} 
  + \frac{|\eta_{1,p} \eta_{2,p} \eta_{3,p} \eta_{4,p} |}{\left( 1-|G_{1,p}| \right)^{2}} \right) \max_{0\leqslant i\leqslant n-3} |e_{i}^{k-3}| \\
&\quad + \left( \frac{|\eta_{1,p} \eta_{2,p} \eta_{3,p} \eta_{4,p} |}{\left( 1-|G_{1,p}| \right)^{2}} 
+ \frac{|\eta_{1,p}^2 \eta_{2,p} \eta_{3,p} |}{1-|G_{1,p}|} \right) \max_{0\leqslant i \leqslant n-3} |e_{i}^{k-3}| \\
&\leqslant  \frac{|\eta_{2,p} \eta_{3,p}|}{1-|G_{1,p}|} \left( |\eta_{1,p} |+\frac{|\eta_{4,p} |}{1-|G_{1,p}|} \right)^{2} \max_{0\leqslant i\leqslant n-3} |e_{i}^{k-3}| 
=: \gamma_{b,p} \max_{0\leqslant i\leqslant n-3} |e_{i}^{k-3}|,
\end{align*}
where the convergence function $\gamma_{b,p}$ is defined as 
\begin{equation}\label{eqn:g_b}
\gamma_{b,p} :=\frac{|\eta_{2,p} \eta_{3,p}|}{1-|G_{1,p} |} \left( |\eta_{1,p}| +\frac{|\eta_{4,p}|}{1-|G_{1,p} |} \right)^{2}.
\end{equation}

\textbf{Step (III):} 
We derive a bound for the last sum ${\rm III}$ in \eqref{eqn:three}, 
\begin{align*}
|{\rm III}|&\leqslant \left| \eta_{2,p} \eta_{3,p}^{2} \sum_{m=0}^{n-3} \sum_{j=0}^{n-3-m} G_{1,p}^{m+j} \left( (m+1)\eta_{4,p} + \eta_{1,p} \right) d_{n-3-m-j,p}^{k-3} \right| \\
&\quad + \left| \eta_{1,p}^{2} \eta_{3,p} \sum_{m=0}^{n-3} G_{1,p}^{m} \left( (m+1)\eta_{4,p} + \eta_{1,p} \right) d_{n-3-m,p}^{k-3} \right|. 
\end{align*}
With $|G_{1,p}|<1$ and  \eqref{geom-series},
we obtain
\begin{align*}
|{\rm III}|&\leqslant  \left( \frac{|\eta_{2,p} \eta_{3,p}^{2} \eta_{4,p}|}{(1-|G_{1,p}|)^{3}} + \frac{|\eta_{1,p} \eta_{2,p} \eta_{3,p}^{2} |}{(1-|G_{1,p}|)^{2}} 
+ \frac{|\eta_{1,p}^{2} \eta_{3,p} \eta_{4,p} |}{(1-|G_{1,p}|)^{2}} + \frac{|\eta_{1,p}^{3} \eta_{3,p} |}{1-|G_{1,p}|} \right) \max_{0\leqslant i \leqslant n-3} |d_{i,p}^{k-3}|\\
&=: \gamma_{c,p}  \max_{0\leqslant i \leqslant n-3} |d_{i,p}^{k-3}|,
\end{align*}
where the convergence factor $\gamma_{c,p}$ is defined as 
\begin{equation}\label{eqn:g_c}
\gamma_{c,p} :=\frac{|\eta_{2,p} \eta_{3,p}^{2} \eta_{4,p}|}{(1-|G_{1,p}|)^{3}} + \frac{|\eta_{1,p} \eta_{2,p} \eta_{3,p}^{2} |}{(1-|G_{1,p}|)^{2}} 
+ \frac{|\eta_{1,p}^{2} \eta_{3,p} \eta_{4,p} |}{(1-|G_{1,p}|)^{2}} + \frac{|\eta_{1,p}^{3} \eta_{3,p} |}{1-|G_{1,p}|}.
\end{equation}

{\textbf{Step (IV):}}
We now turn to the misalignment-induced error $d_{i,p}^{k-3}$, which is present in the last sum in \eqref{eqn:three}. 
Our approach involves expressing $d_{i,p}^{k+1}$ in terms of $d_{i,p}^{k-1}$ and $e_{i,p}^{k-1}$ from \eqref{eqn:coupled_p}; this enables 
us to establish an upper bound for $d_{i,p}^{k+1}$. We start from \eqref{eqn:coupled},
\begin{align*}
d_{n+1,p}^{k+1} &= \eta_{2,p} e_{n,p}^{k} + \eta_{1,p} d_{n,p}^{k} \\
&= \eta_{2,p} \big( G_{1,p} e_{n-1,p}^{k} + \eta_{4,p} e_{n-1,p}^{k-1} +  \eta_{3,p} d_{n-1,p}^{k-1} \big) + \eta_{1,p} d_{n,p}^{k} \\
&= \eta_{2,p} \sum_{j=0}^{n-1} G_{1,p}^{j} \big( \eta_{4,p} e_{n-1-j,p}^{k-1} +  \eta_{3,p} d_{n-1-j,p}^{k-1} \big) +\eta_{1,p} d_{n,p}^{k}.
\end{align*}
Then, we further expand $d_{n,p}^k$ with $d_{n,p}^{k}=\eta_{2,p} e_{n-1,p}^{k-1}+\eta_{1,p} d_{n-1,p}^{k-1}$,
\begin{align*}
d_{n+1,p}^{k+1} &= \eta_{2,p} \sum_{j=0}^{n-1} G_{1,p}^{j}\big(\eta_{4,p} e_{n-1-j,p}^{k-1} +  \eta_{3,p} d_{n-1-j,p}^{k-1}\big) 
+ \eta_{1,p}\big(\eta_{2,p} e_{n-1,p}^{k-1} + \eta_{1,p} d_{n-1,p}^{k-1}\big) \\
&= \eta_{2,p} \eta_{4,p} \sum_{j=1}^{n-1} G_{1,p}^{j} e_{n-1-j,p}^{k-1} + \eta_{2,p} (\eta_{1,p}+\eta_{4,p}) e_{n-1,p}^{k-1} \\
&\quad + \eta_{2,p} \eta_{3,p} \sum_{j=1}^{n-1} G_{1,p}^{j}d_{n-1-j,p }^{k-1} + (\eta_{2,p} \eta_{3,p}+\eta_{1,p}^{2}) d_{n-1,p}^{k-1}.
\end{align*}
We take the absolute norm on both sides and derive the bound for $d_{n+1,p}^{k+1}$, 
\begin{align*}
|d_{n+1,p}^{k+1}| &\leqslant \left( \frac{|G_{1,p} \eta_{2,p} \eta_{4,p}|}{1 - |G_{1,p}|} + |\eta_{2,p} (\eta_{4,p} + \eta_{1,p})| \right) \max_{0 \leqslant i\leqslant n-1} |e_{i}^{k-1}| \\
&\quad + \left( \frac{|G_{1,p} \eta_{2,p} \eta_{3,p}|}{1 - |G_{1,p}|} + |\eta_{2,p}  \eta_{3,p} + \eta_{1,p}^{2}| \right) \max_{0\leqslant i\leqslant n-1} |d_{i,p}^{k-1}|\\
&=: \gamma_{d,p}\max_{0\leqslant i\leqslant n-1}|e_i^{k-1}| + \gamma_{e,p} \max_{0 \leqslant i\leqslant n-1} |d_i^{k-1}|,
\end{align*}
where the convergence factors $\gamma_{d,p}$ and $\gamma_{e,p}$ are defined as 
\begin{align}
\gamma_{d,p} &:=\frac{|G_{1,p} \eta_{2,p}\eta_{4,p} |}{1-|G_{1,p} |} +|\eta_{2,p}\left( \eta_{4,p} +\eta_{1,p} \right) |,\label{eqn:g_d}\\
\gamma_{e,p} &:=\frac{|G_{1,p} \eta_{2,p} \eta_{3,p} |}{1-|G_{1,p} |} +|\eta_{2,p} \eta_{3,p} +\eta_{1,p}^{2} |.\label{eqn:g_e}
\end{align}
Note that the first term in \eqref{eqn:g_c} becomes unbounded when $z_p=0$ due to the denominator. 

\vspace{5pt}
Finally, we arrive at the iteration on index $k$, based on the estimates of the three terms: 
\begin{align*}
\left\{
\begin{aligned}
|e_{n+1,p}^{k+1}| &\leqslant \gamma_{a,p} \max_{0 \leqslant i \leqslant n-1} |e_{i,p}^{k-1}| 
+ \gamma_{b,p} \max_{0\leqslant i \leqslant n-3} |e_{i,p}^{k-3}| + \gamma_{c,p} \max_{0 \leqslant i\leqslant n-3} |d_{i,p}^{k-3}|, \\
|d_{n,p}^{k-1}|   &\leqslant \gamma_{d,p} \max_{0 \leqslant i\leqslant n-3} |e_{i,p}^{k-3}| + \gamma_{e,p} \max_{0\leqslant i \leqslant n-3} |d_{i,p}^{k-3}|,
\end{aligned}
\right.
\end{align*}
where the left-hand side of the first inequality is independent of $n$; then, we take the maximum over $n$. 
We recall that $x_j = \max_{0\leqslant i\leqslant n-k+j}|e_{i,p}^{j}|$ and $y_j = \max_{0\leqslant i\leqslant n-k+j} |d_{i,p}^j|$, and obtain
\begin{align}\label{eqn:bound}
\left\{
\begin{aligned}
x_{k+1,p} &\leqslant \gamma_{a,p} x_{k-1,p} + \gamma_{b,p} x_{k-3,p} + \gamma_{c,p} y_{k-3,p}, \\
y_{k-1,p} &\leqslant \gamma_{d,p} x_{k-3,p} + \gamma_{e,p} y_{k-3,p}.
\end{aligned}
\right.
\end{align}
The proof is  complete. 
\end{proof}

\begin{remark}\label{rmk:n_0}
Note that the summation in \eqref{eqn:g_a} involves infinitely many terms and is thus not directly computable. To obtain an accurate approximation of $\gamma_{a,p}$, we compute the first $n_0$ terms explicitly and bound the tail of the series as follows:
\begin{align*}
\gamma_{a,p}&= \Big(\sum_{m=0}^{n_0}+\sum_{m=n_0+1}^{\infty}\Big) |G_{1,p}|^{m}\left| (m+1) \eta_{4,p}^{2} +\eta_{3,p} \eta_{2,p}\right|\\
&\leqslant  \sum_{m=0}^{n_0} |G_{1,p}| ^{m}\left| (m+1) \eta_{4,p}^{2} +\eta_{3,p} \eta_{2,p}\right| + \frac{|G_{1,p}|^{n_0+1}|\eta_{3,p}\eta_{2,p}|}{1-|G_{1,p}|} \\
&\quad+\eta_{4,p}^2\frac{(n_0+2)|G_{1,p}|^{n_0+1}-(n_0+1)|G_{1,p}|^{n_0+2}}{(1-|G_{1,p}|)^2}.
\end{align*}
We retain the first $n_0$ terms to ensure a sharp estimate, while noting that $\gamma_{a,p}$ is defined independently of $n_0$. In the numerical examples presented in Section \ref{subsec:exam_inv}, we set $n_0 = 50$.
\end{remark}

Inequality \eqref{eqn:bound} leads to the following system of inequalities:
\begin{equation}\label{eqn:system}
\begin{pmatrix} 
        x_{2m,p} \\ 
        x_{2m-2,p} \\ 
        y_{2m-2,p} 
    \end{pmatrix} 
    \leqslant 
    \begin{pmatrix} 
        \gamma_{a,p} & \gamma_{b,p} & \gamma_{c,p} \\ 
        1 & 0 & 0 \\ 
        0 & \gamma_{d,p} & \gamma_{e,p} 
    \end{pmatrix} 
    \begin{pmatrix} 
        x_{2m-2,p} \\ 
        x_{2m-4,p} \\ 
        y_{2m-4,p} 
    \end{pmatrix},
\end{equation}
which can be compactly written as $\mathbf{S}_{m,p} \leqslant \mathbf{A}_p \mathbf{S}_{m-1,p}$. We next analyze the matrix $\textbf{A}_p$. 
Let the eigenvalues of $\mathbf{A}_p$ be denoted by $\xi_{\ell,p}$, for $\ell = 1, 2, 3$; they satisfy the cubic equation
\begin{equation}\label{eqn:cubic}
\xi_{\ell,p}^{3} -( \gamma_{a,p} +\gamma_{e,p}) \xi_{\ell,p}^{2} +( \gamma_{a,p} \gamma_{e,p} -\gamma_{b,p}) \xi_{\ell,p} 
+( \gamma_{b,p} \gamma_{e,p} -\gamma_{c,p} \gamma_{d,p} ) =0.
\end{equation}
Then, we define the \textit{convergence function} as  
\begin{equation}\label{eqn:gazJ}
\gamma ( z_{p},J ) =\max_{\ell \in \left\{ 1,2,3 \right\}} |\xi_{\ell,p}|
\end{equation}
and the \textit{convergence factor} as $\gamma^{\dag} ( J ) =\sup_{p\in \mathbb{N}^{+}} \gamma ( z_{p},J )$. 
Note that $\gamma(z_p, J)$ depends on two components involving $J$: the FPs $F_1^{(J)}(z_p)$ and $F_2^{(J)}(z_p)$, and the CPs $G_i$. 

\begin{theorem}\label{thm:main}
When $k$ is even, the following estimate holds, for some constant $C$ independent of $n,k$ and $\gamma^\dag$, 
\begin{equation}\label{eqn:specical_thm}
\| E_{n}^{k}\|_H\leqslant C(  {\gamma}^{\dag} ( J ) )^{k/2-2} \sum_{i=0}^{n} \left( \| E_{i}^{2}\|_H 
+\| E_{i}^{0}\|_H +\| E_{i,-1}^{0}\|_H \right).
\end{equation} 
\end{theorem}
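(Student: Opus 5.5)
We work mode by mode, starting from the componentwise recursion \eqref{eqn:system} given by Theorem~\ref{thm:err_recursion}, and then recombine the modes with Parseval's identity. Fix $p$, an even $k=2m$, and an index $n$. The quantities $x_{j,p}$ and $y_{j,p}$ are maxima over index windows that shrink consistently as $j$ decreases (each $x_j$ is taken over $0\leqslant i\leqslant n-k+j$). We may therefore iterate $\mathbf{S}_{m,p}\leqslant \mathbf{A}_p\mathbf{S}_{m-1,p}$ down to $\mathbf{S}_{2,p}=(x_{4,p},x_{2,p},y_{2,p})^T$. Since $\mathbf{A}_p$ has nonnegative entries, the inequality is preserved under iteration, so
\[
\mathbf{S}_{m,p}\leqslant \mathbf{A}_p^{\,m-2}\mathbf{S}_{2,p}.
\]
Applying the same inequality once more expresses $\mathbf{S}_{2,p}$ through $x_{2,p}$, $x_{0,p}$ and $y_{0,p}$, so that
\[
|e^{k}_{n,p}|\leqslant x_{k,p}\leqslant \|\mathbf{A}_p^{\,m-1}\|\,\big(x_{2,p}+x_{0,p}+y_{0,p}\big).
\]
This matches the exponent $k/2-2$ up to one harmless extra factor of $\|\mathbf{A}_p\|$, which can be absorbed into the constant, or we can simply stop at $\mathbf{S}_{2,p}$ and bound its entries directly.

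The key step is a bound $\|\mathbf{A}_p^{\,j}\|\leqslant C\,\gamma(z_p,J)^{j}\leqslant C(\gamma^\dag(J))^j$ with $C$ independent of $p$ and $j$. I would obtain this from the eigenstructure of the $3\times3$ matrix. If $\mathbf{A}_p$ is diagonalizable with a uniformly conditioned eigenbasis, the bound is immediate. In general I would use the Jordan form, or the Cayley--Hamilton relation for the cubic \eqref{eqn:cubic}: powers of $\mathbf{A}_p$ obey a three-term linear recursion with coefficients $\gamma_{a,p}+\gamma_{e,p}$, $\gamma_{a,p}\gamma_{e,p}-\gamma_{b,p}$ and $\gamma_{b,p}\gamma_{e,p}-\gamma_{c,p}\gamma_{d,p}$. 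These coefficients are uniformly bounded in $p$, because the $\eta_{i,p}$ and $G_{1,p}$ are bounded rational functions of $z_p\in(0,\infty)$ with limits at $0$ and $\infty$.

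Alternatively, I would use Perron--Frobenius. The matrix $\mathbf{A}_p$ is nonnegative, and when it is irreducible it has a positive eigenvector $v_p$ for its spectral radius $\gamma(z_p,J)$. Then $\mathbf{A}_p^{\,j}v_p=\gamma(z_p,J)^jv_p$, and any nonnegative vector $w$ satisfies $w\leqslant (\max_\ell w_\ell/v_{p,\ell})\,v_p$. This gives $\mathbf{A}_p^{\,j}w\leqslant \gamma^{j}\,\kappa_p\|w\|$, where $\kappa_p=\max_\ell v_{p,\ell}/\min_\ell v_{p,\ell}$. The remaining task is to bound $\kappa_p$ uniformly in $z_p$, and this is the main obstacle. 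I would treat it by continuity on compact subsets of $(0,\infty)$ together with an asymptotic analysis as $z_p\to0$ and $z_p\to\infty$. The remark after \eqref{eqn:g_e}, noting that $\gamma_{c,p}$ can blow up as $z_p\to0$, signals exactly this issue. Near $z_p=0$ I expect to use $\eta_{2,p}\to0$ at a compensating rate, since the products $\eta_{2,p}\eta_{3,p}/(1-G_{1,p})^3$ remain bounded. I would also allow $C$ to depend on $J$, and possibly on the smallest eigenvalue $\lambda_1>0$; since $A$ has compact inverse, $z_p\geqslant \tau\lambda_1>0$ always holds.

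Finally, we convert the window maxima into the sum appearing in \eqref{eqn:specical_thm}. Bound $x_{2,p}\leqslant \sum_{i\leqslant n}|e^2_{i,p}|$, and similarly $x_{0,p}$ and $y_{0,p}$. Then use $|d^0_{i,p}|\leqslant C(|e^0_{i,p}|+|(E^0_{i,-1},\varphi_p)|)$, which holds because $|G_{1,p}G_{2,p}^{-1}|=|R_1/R_2|$ is uniformly bounded. Squaring, summing over $p$ with Parseval's identity, and applying the triangle inequality in $\ell^2$ to the finite sum over $i$ gives
\[
\|E_n^k\|_H\leqslant C(\gamma^\dag(J))^{k/2-2}\sum_{i=0}^n\big(\|E_i^2\|_H+\|E_i^0\|_H+\|E_{i,-1}^0\|_H\big).
\]
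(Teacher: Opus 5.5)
Your skeleton is the same as the paper's: reduce to the modewise $3\times 3$ system \eqref{eqn:system}, bound powers of $\mathbf{A}_p$ through its spectral radius, and recombine with Parseval. Your iteration step is in fact cleaner than the paper's. You propagate the componentwise inequality using $\mathbf{A}_p\geqslant 0$ and obtain $\mathbf{S}_{m,p}\leqslant\mathbf{A}_p^{\,m-1}(x_{2,p},x_{0,p},y_{0,p})^T$. The paper instead multiplies the inequality by the Jordan similarity $\mathbf{P}_p$, which is neither order preserving nor necessarily real.

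The gap is the estimate everything rests on: a $p$-uniform bound for $\|\mathbf{A}_p^{\,j}\|$. In the form you state it, $\|\mathbf{A}_p^{\,j}\|\leqslant C\gamma(z_p,J)^j$, it is false.
\begin{itemize}
\item The $(2,1)$ entry of $\mathbf{A}_p$ is always $1$, so $\|\mathbf{A}_p\|\geqslant 1$.
\item BDF2 is strongly A-stable, $G_{2,p}/G_{1,p}$ is bounded (Corollary~\ref{coro:1}), and the coarse propagators BE, LIIIC2, LIIIC2(2) all have $R(\infty)=0$. Hence all $\eta_{i,p}$ and $G_{1,p}$ tend to $0$ as $z_p\to\infty$.
\item Consequently $\gamma_{a,p},\dots,\gamma_{e,p}\to 0$ and $\gamma(z_p,J)\to 0$, so $\mathbf{A}_p$ approaches the nilpotent matrix whose only nonzero entry is that $1$. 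Already $j=1$ fails.
\end{itemize}
The same fact defeats your Perron--Frobenius programme. The Perron vector is $v_p=(\rho_p,1,\gamma_{d,p}/(\rho_p-\gamma_{e,p}))$ with $\rho_p=\gamma(z_p,J)$, so $\kappa_p\geqslant 1/\rho_p\to\infty$, and the ``remaining task'' cannot be carried out. The Cayley--Hamilton remark does not help either. Bounded coefficients of the scalar recursion still allow $j\rho^{j-1}$ growth when roots coalesce. Turning the recursion into a matrix bound also needs $\|\mathbf{A}_p\|$, whose entry $\gamma_{c,p}$ is unbounded as $z_p\to0$ (see the remark after \eqref{eqn:g_e}); only the product $\gamma_{c,p}\gamma_{d,p}$ enters \eqref{eqn:cubic}.

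What you need instead is a bound $\|\mathbf{A}_p^{\,j}\|\leqslant C(\gamma^\dag(J))^{j-1}$ with $C$ independent of $p$, $j$ and $\gamma^\dag$. Losing one power cannot be avoided; compare $\begin{pmatrix}a&0\\1&0\end{pmatrix}^{j}=a^{j-1}\begin{pmatrix}a&0\\1&0\end{pmatrix}$. Absorbing that power as a factor $1/\gamma^\dag$ would make $C$ depend on $\gamma^\dag$. This is exactly what the spare power you noticed pays for: you reach $\mathbf{A}_p^{\,m-1}$, while \eqref{eqn:specical_thm} only asks for $(\gamma^\dag)^{m-2}$.

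Proving such a bound uniformly requires the explicit structure of $\mathbf{A}_p$, in three regimes:
\begin{itemize}
\item at high frequencies, a perturbation argument around the nilpotent limit;
\item at low frequencies, the lower bound $z_p\geqslant\tau\lambda_1$, as you suggest;
\item in between, continuity and strict dominance of the Perron root, with care wherever $\mathbf{A}_p$ becomes reducible (e.g.\ at zeros of $\eta_{2,p}$, where $\gamma_{b,p}=\gamma_{d,p}=0$ and Jordan-type growth can appear).
\end{itemize}
The paper does not supply this step either. Its constant $C$ tacitly assumes $\sup_p\|\mathbf{P}_p\|\,\|\mathbf{P}_p^{-1}\|<\infty$ and no Jordan growth, and this fails for the same high-frequency reason. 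So you have located the real difficulty correctly, but as written your proof is incomplete exactly at that point.
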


\begin{proof} 
We assume that the matrix $\mathbf{A}_p$ defined in \eqref{eqn:system} has the decomposition $\mathbf{A}_p = \mathbf{P}_p^{-1} \mathbf{D}_p \mathbf{P}_p$, where the matrix $\mathbf{D}_p$ is the upper Jordan matrix with the largest absolute eigenvalue $ {\gamma} (z_p,J)$. Then, system \eqref{eqn:system} becomes 
    \begin{equation*}
    \mathbf{P}_p \mathbf{S}_{m,p}\leqslant \mathbf{D}_p \mathbf{P}_p \mathbf{S}_{m-1,p}\leqslant \mathbf{D}_p^{m-2}\mathbf{P}_p\mathbf{S}_{2,p}.
    \end{equation*}
Taking the 2-norm on both sides, we obtain
\begin{equation*}
\| \mathbf{S}_{m,p}\|_{2} \leqslant C\| \mathbf{P}_p\mathbf{S}_{m,p}\|_{2} \leqslant C\| \mathbf{D}_p^{m-2}\|_{2} \| \mathbf{P}_p\mathbf{S}_{2,p}\|_{2} 
\leqslant C\left( {\gamma}^{\dag} (J) \right)^{m-2}\| \mathbf{S}_{2,p}\|_{2}.
\end{equation*}
Then, we estimate the $H$-norm of the parareal error,
\begin{align*}
&\quad \| E_{n+2m-k}^{2m}\|_H^{2} 
= \sum_{p=1}^{\infty} \left( e_{n+2m-k,p}^{2m} \right)^{2} \leqslant \sum_{p=1}^{\infty} |x_{2m,p}|^{2} \leqslant \sum_{p=1}^{\infty} \| \mathbf{S}_{m,p} \|_{2}^{2}\\
&\leqslant C\left(  {\gamma}^{\dag} \left( J \right) \right)^{2m-4} \sum_{p=1}^{\infty} \| \mathbf{S}_{2,p} \|_{2}^{2} \leqslant C\left( {\gamma}^{\dag} \left( J \right) \right)^{2m-4} \sum_{p=1}^{\infty} \left( |x_{2,p}|^{2}+|x_{0,p}|^{2}+|y_{0,p}|^{2} \right) \\
&\leqslant C\left(  {\gamma}^{\dag} \left( J \right) \right)^{2m-4} \sum_{i=0}^{n} \left( \| E_{i}^{2}\|_H^{2} +3\| E_{i}^{0}\|_H^{2} +2\| E_{i,-1}^{0}\|_H^{2} \right). 
\end{align*}
By the elementary inequality $\sqrt{\sum_{i=0}^{n} a_{i}^{2}} \leqslant \sum_{i=0}^{n} |a_{i}|$, we obtain
\begin{equation*}
\| E_{n+2m-k}^{2m}\|_H \leqslant C\left(  {\gamma}^{\dag} \left( J \right) \right)^{m-2} \sum_{i=0}^{n} \left( \| E_{i}^{2}\|_H
+\| E_{i}^{0}\|_H+\| E_{i,-1}^{0}\|_H \right).
\end{equation*}
Since $k$ is even, we take $m = k/2$ and thus 
\begin{equation*}
\| E_{n}^{k}\|_H \leqslant C\left(  {\gamma}^{\dag} \left( J \right) \right)^{k/2-2} \sum_{i=0}^{n} 
\left( \| E_{i}^{2}\|_H +\| E_{i}^{0}\|_H +\| E_{i,-1}^{0}\|_H \right).
\end{equation*}
This completes the proof of the Theorem.
\end{proof}

\section{Discussion on the convergence properties}
\label{subsec:exam_inv}

To illustrate the convergence in \eqref{eqn:specical_thm}, we consider four CPs, defined in \eqref{eqn:cp}: the backward Euler method 
(BE), the two-stage Lobatto IIIC (LIIIC2), the double two-stage Lobatto IIIC (LIIIC2(2)), and the exact solver, given by
\begin{equation*}
\begin{aligned}
\text{(BE):}~R(s) &= \frac{1}{1+s} , &\quad \text{(LIIIC2):}~R(s)&=\frac{2}{s^2+2s+2},\\
\text{(LIIIC2(2)):}~R(s)&=\left(\frac{2}{({s/2})^2+s+2}\right)^2,&\quad \text{(Exact):}~R(s) &= \e^{-s}.
\end{aligned}
\end{equation*}
The nontation LIIIC2(2) denotes applying LIIIC2 twice with half the step size, which is a 
typical choice for a high-resolution CP to achieve fast convergence \cite{dai2013stable}.

Recall that we apply the spectral decomposition to \eqref{eqn:coupled} and define $z_p = \tau  \lambda_p$ in the discrete setting. Note that 
in practice, $\tau $ is small, while $\lambda_p \to \infty$ as $p \to \infty$. Thus, for convenience, we consider the continuous spectrum 
case, i.e., we take $z=z_p\in \mathbb{R}^+$ and define $\eta_1 (z,J) = F_1^{(J-1)} (z) -F_1^{(J)}(z) R_2(Jz) R_1 (Jz),$ according to \eqref{eqn:eta_1}. Similarly, we define $\eta_2,\eta_3$ and $\eta_4$, as well as $\gamma_a,\gamma_b,\gamma_c,\gamma_d$ and $\gamma_e$.

\subsection{Comparison of \texorpdfstring{${\gamma}_{a}$}{gamma	extunderscore a}}
In Fig.~\ref{fig:comparison_g_a}, we draw the graph of ${\gamma}_{a}$ for the four 
cases. When we apply the BE, for both Type (I) and Type (II), the supremum of ${\gamma}_a (z,J)$ increases as the coarsening factor $J$ increases. The values of $\sup_{z \in \mathbb{R}^+}{\gamma}_a (z,J)$ are nearly identical for both types. 

A key distinction lies in the behavior of the functions: Type (II) functions approach zero at $z=0$, whereas Type (I) functions do not. 
However, for other CP solvers, $\sup_{z\in \mathbb{R}^+} {\gamma}_a(z,J)$ for Type (II) is significantly smaller than for Type (I). 
This suggests that the Type (II) update is more effective, as CPs typically approximate the exact solver. This can be explained from 
the main component $ {\eta}_{4,p}$ of \eqref{eqn:g_0_p} in $ {\gamma}_{a,p}$ in \eqref{eqn:g_a} when the CP is the exact solver: 
\begin{align*}
\text{Type (I):}~& {\eta}_{4,p} ( z_p ) = F_{1,p}^{(J)}( z_p ) + F_{2,p}^{(J)}( z_p ) - \e^{ -J z_p }; \\
\text{Type (II):}~& {\eta}_{4,p} ( z_p ) = F_{1,p}^{(J)}( z_p ) \e^{ z_p } + F_{2,p}^{(J)}( z_p ) - \e^{ -J z_p }.
\end{align*}
Type (II) is more reasonable, as $F_{1,p}^{(J)}$ and $F_{2,p}^{(J)}$ should not occupy the same position. 
Specifically, $F_{1,p}^{(J)} \e^{z_p}$ precedes $F_{2,p}^{(J)}$ by one position. Moreover, as the CPs provide a better approximation to the exact solver, the value of $\sup_{z \in \mathbb{R}^+} \gamma_a(z, J)$ becomes smaller.

\begin{figure}[htbp!]
  \centering
  \includegraphics[width=0.98\textwidth,trim={0.7cm 1cm 1.5cm 1cm},clip]{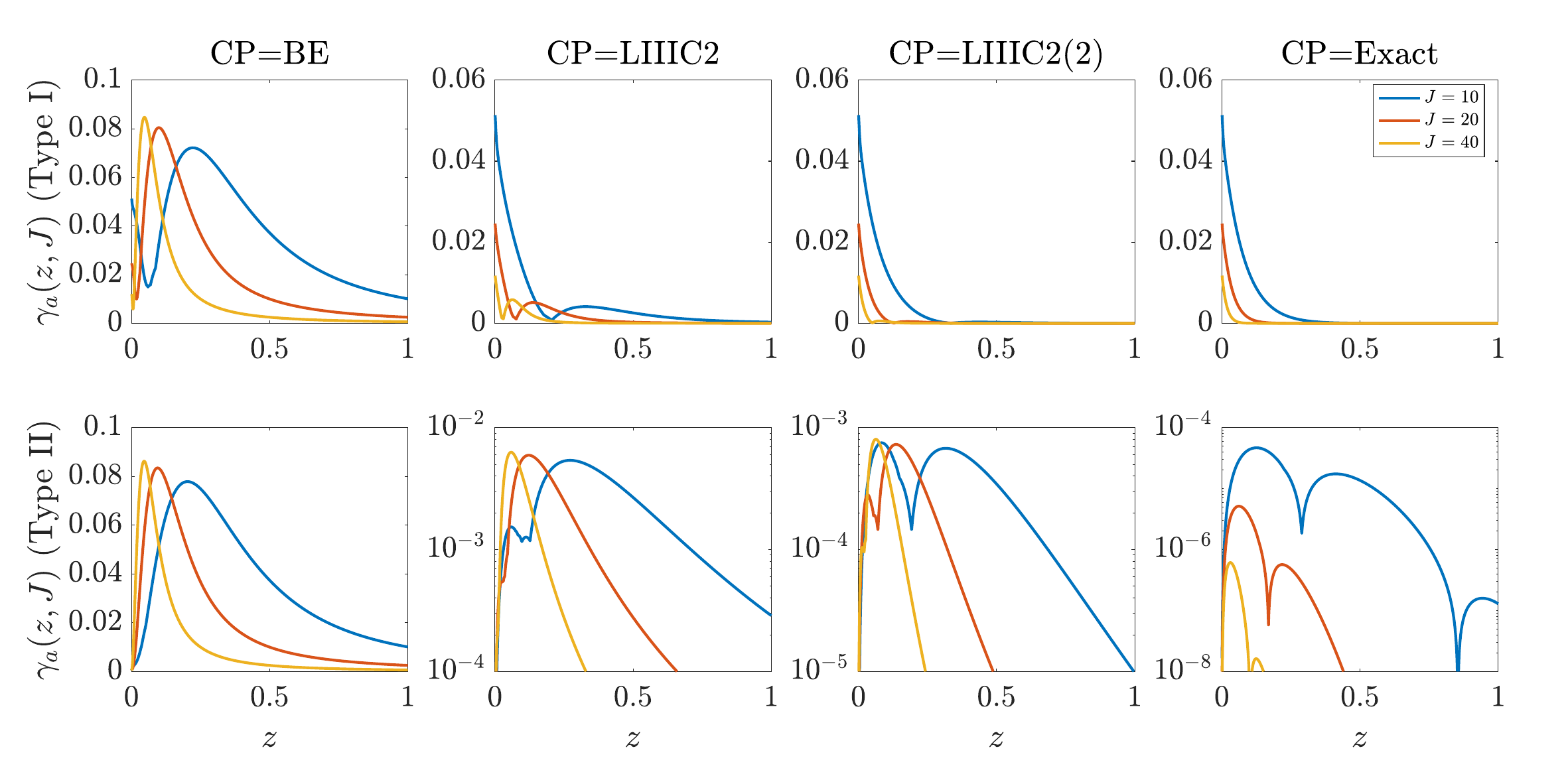}
    \caption{Comparison of the graph of ${\gamma}_a (z,J)$ in \eqref{eqn:g_a}  for four CPs of two types.}
    \label{fig:comparison_g_a}
\end{figure}

\subsection{Comparison of the convergence factor}

Fig.~\ref{fig:comparison_g} illustrates the graphs of ${\gamma}(z,J)$ for the same four solvers. Note that ${\gamma}(z,J)$ is slightly larger than $\gamma_a (z,J)$, but their behaviors are similar, as observed by comparing Fig.~\ref{fig:comparison_g} 
with Fig.~\ref{fig:comparison_g_a}. Table~\ref{tab:comparison} presents the values of ${\gamma}^\dag (J)$. As 
demonstrated in Section~\ref{sec:connection}, the convergence factor ${\gamma}^\dag$ approaches $\gl^2$ defined in \eqref{eqn:gamma}
as $J \to \infty$ for both Type (I) and (II). However, Type (II) already provides an excellent approximation to $\gl^2$ at relatively small 
values of $J$, which makes it particularly attractive for practical applications. 

\begin{figure}[htbp!]
  \centering
  \includegraphics[width=0.98\textwidth,trim={0.7cm 1cm 1.5cm 1cm},clip]{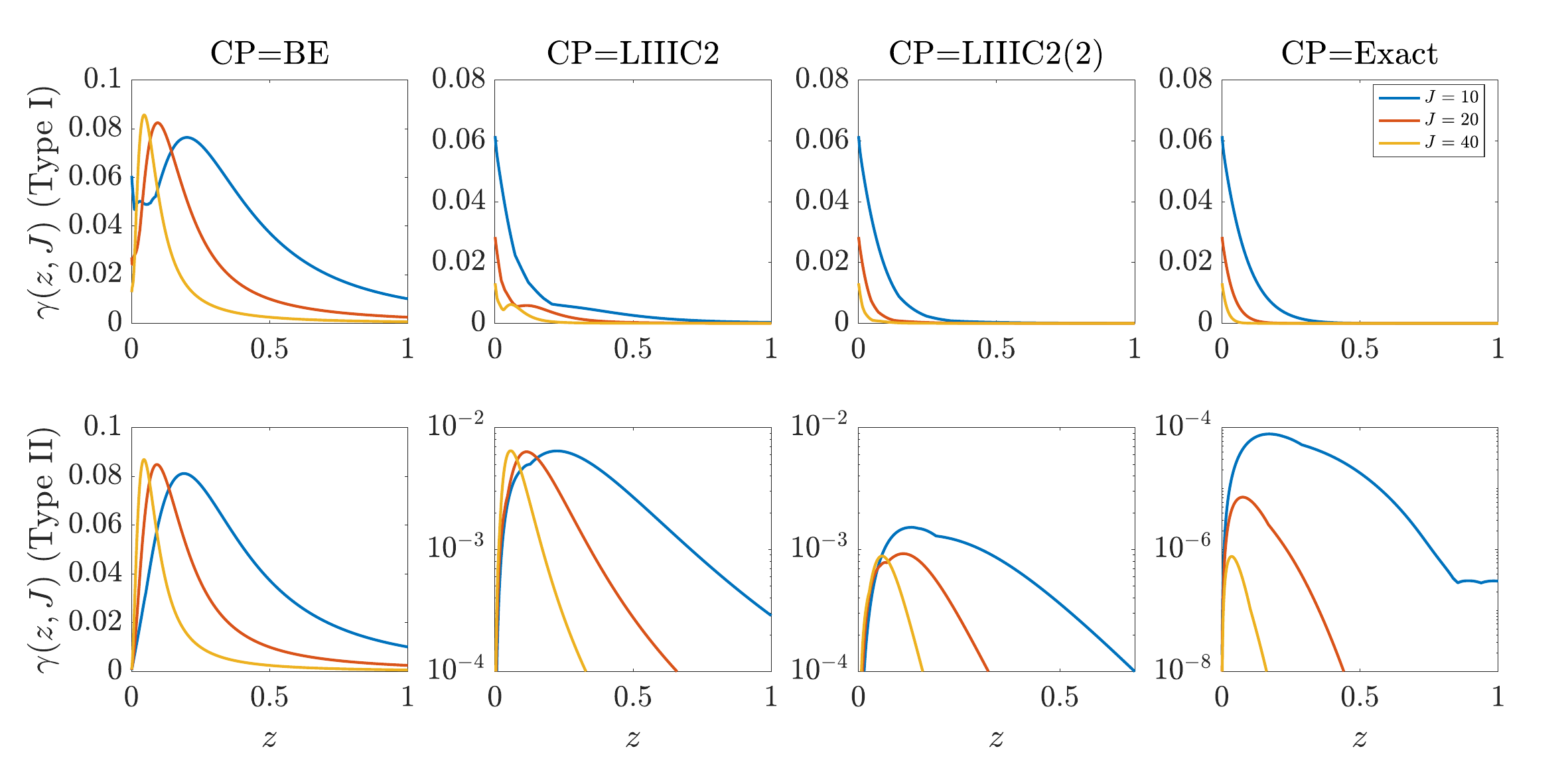}
    \caption{Comparison of the graph of ${\gamma} (z,J)$ in \eqref{eqn:gazJ} for four CPs of two types.}
    \label{fig:comparison_g}
\end{figure}

\begin{table}[htbp!]
\centering
\caption{ The value of ${\gamma}^\dag$ across different $J$ values for four solvers of two types.}
\label{tab:comparison}
\begin{tabular}{c|c|ccccc}
\toprule
\multirow{2}{*}{CP$(\gl^2)$} & \multirow{2}{*}{Type} & \multicolumn{5}{c}{$J$ } \\
\cmidrule{3-7}
 & & 10 & 20 & 40 & 60 & 120 \\
\midrule
{BE} & I &0.0764 &0.0824 &0.0857 &0.0868 & 0.0879 \\
\cmidrule{2-7}
 (0.0891)& II & 0.0812 &0.0849 &0.0869  & 0.0876 & 0.0883  \\
 \midrule
{LIIIC2} & I & 0.0625 & 0.0293 & 0.0140 & 0.00913 & 0.00649 \\
\cmidrule{2-7}
 (0.00668)& II & \textbf{0.00644} & \textbf{0.00634} & \textbf{0.00646}  & \textbf{0.00652} & \textbf{0.00660}   \\
  \midrule
{LIIIC2(2)} & I & 0.0625 & 0.0293 & 0.0140 & 0.00913 & 0.00444  \\
\cmidrule{2-7}
 (0.000906)& II & \textbf{0.00152} & \textbf{0.000926} & \textbf{0.000885}  & \textbf{0.000886} & \textbf{0.000893} \\
\midrule
{Exact} & I &0.0625 &0.0293 & 0.0140 & 0.00913 &0.00444   \\
\cmidrule{2-7}
 (0.)& II & \textbf{7.77e-5} & \textbf{7.22e-6} & \textbf{7.77e-7} & \textbf{2.15e-7} & \textbf{2.50e-8}  \\
\bottomrule
\end{tabular}
\end{table}

\section{Connection with plain parareal}\label{sec:connection}

In this section, we consider Dahlquist's test equation for $\lambda >0 $,
\begin{equation}\label{eqn:Test}
    u'(t) + \lambda u(t) = 0.
\end{equation}
The F-multistep parareal \ref{alg:m_para} employs the BDF2 method \eqref{eqn:fp} as FPs and the CPs defined in \eqref{eqn:cp}. We derive 
explicit expressions of the FPs $F_1^{(J)}$ and $F_2^{(J)}$. The fine solution $U_{n+1} = U_{n,J}$ satisfies the relation
\begin{equation*}
    U_{n,J}=-{(3+2\tau \lambda)}^{-1} U_{n,J-2}+{4}{(3+2\tau \lambda)}^{-1} U_{n,J-1}.
\end{equation*}
Using recursion techniques \eqref{eqn:fp}, we obtain
\begin{equation*}
U_{n,J}=\frac{r_{1}\left( z \right) r_{2}\left( z \right)}{r_{1}\left( z \right) -r_{2}\left( z \right)} \left( r_{2}^{J}\left( z \right) -r_{1}^{J}\left( z \right) \right) U_{n,-1}+\frac{r_{1}^{J+1}\left( z \right) -r_{2}^{J+1}\left( z \right)}{r_{1}\left( z \right) -r_{2}\left( z \right)} U_{n,0},
\end{equation*}
where $z=\tau \lambda$ and $r_1 (z), r_2 (z)$ are the characteristic roots,
\begin{equation*}
r_{1}\left( z \right) =\frac{2+\sqrt{1-2z}}{3+2z} ,\quad r_{2}\left( z \right) =\frac{2-\sqrt{1-2z}}{3+2z}.
\end{equation*}
Consequently, $F_1^{(J)}$ and $F_2^{(J)}$ are given by 
\begin{equation*}
    F_{1}^{\left( J \right)}\left( z \right) =\frac{r_{1}\left( z \right) r_{2}\left( z \right)}{r_{1}\left( z \right) -r_{2}\left( z \right)} \left( r_{2}^{J}\left( z \right) -r_{1}^{J}\left( z \right) \right) ,\quad F_{2}^{\left( J \right)}\left( z \right) =\frac{r_{1}^{J+1}\left( z \right) -r_{2}^{J+1}\left( z \right)}{r_{1}\left( z \right) -r_{2}\left( z \right)} .
\end{equation*}

Based on the error analysis in Section \ref{ssec:Resursion derivation} and equation \eqref{eqn:cubic}, we obtain the cubic equation 
for the ODE \eqref{eqn:Test} $(\ell=1,2,3)$,
\begin{equation}\label{eqn:cubic_ode}
\xi_{\ell}^{3} -\left( \gamma_{a} +\gamma_{e}\right) \xi_{\ell}^{2} +\left( \gamma_{a} \gamma_{e} -\gamma_{b}\right) \xi_{\ell} 
+\left( \gamma_{b} \gamma_{e} -\gamma_{c} \gamma_{d} \right) =0,
\end{equation}
where ${\gamma}_a(z,J), {\gamma}_b(z,J), {\gamma}_c(z,J), {\gamma}_d (z,J)$, and ${\gamma}_e 
(z,J)$ are defined through the combinations of ${\eta}_1 (z,J)$, ${\eta}_2 (z,J), 
{\eta}_3(z,J),{\eta}_4(z,J)$ in \eqref{eqn:g_a}, \eqref{eqn:g_b}, \eqref{eqn:g_c}, \eqref{eqn:g_d}, and \eqref{eqn:g_e}, 
respectively. We will show that ${\gamma}_e \rightarrow 0$, ${\gamma}_b \rightarrow 0$ and 
${\gamma}_c {\gamma}_d \rightarrow 0$ as $J \to \infty$ for all $z \in \mathbb{R}^+$ in Theorem~\ref{thm:lim}. Meanwhile, the 
principal convergence term satisfies $\gamma_a \rightarrow \left( \frac{\e^{-Jz}-R\left( Jz \right)}{1-|R\left( Jz \right) |} \right)^{2}$. Thus, in 
the limit $J \to \infty$, the cubic equation reduces to  
\begin{equation*}
    \xi_{\ell}^{3} - {\gamma}_{a} \xi_{\ell}^{2} =0,
\end{equation*}
and the convergence factor $\gamma^\dag (J)=\max_{\ell \in\{1,2,3\}}\sup_{z\in \mathbb{R}^+}|\xi_{\ell}(z,J)| \rightarrow \gl^2$ as $J 
\to \infty$. This indicates that as the coarsening factor $J$ increases, the convergence rate of the F-multistep parareal algorithm approaches that of 
the plain parareal.
Numerical experiments in Fig.~\ref{fig:comparison_g} and Table~\ref{tab:comparison} support this theoretical result, e.g., for the BE CPs, the convergence 
factor ${\gamma}^\dag (J) \rightarrow \gamma_{\text{lin}}^2 \approx 0.298^2$. For simplicity, we present the proof for CPs~(II) and the proof for CPs~(I) follows similarly. 

\subsection{Preliminary estimates}\label{sec:pre est}
We present several lemmas that will be used in the subsequent proofs. In practical applications, we focus on values of $J$ that are not too small to ensure high efficiency. Accordingly, we consider the estimates for $J\geqslant 10 $.

\begin{lemma}\label{lem:1}
There exists a constant $C>0$, independent of $J$ and $z$, such that, for $z \geqslant 0$ and $J\geqslant 10$, $\left| {R_2(Jz)}/{R_1(Jz)} - 1 \right| \leqslant {C}/{J}.$ Moreover, for any given $z_0 \in (0,1/2)$, there exists a constant $C_R>0$, independent of $J$ and $z$, such that, for all $z \leqslant z_0$ and $J\geqslant 10$,
\begin{equation}\label{eqn:e^z}
    \left| {R_2(Jz)}/{R_1(Jz)} - \e^z \right| \leqslant C_R z \quad \text{and} \quad \left| {R_1(Jz)}/{R_2(Jz)} - \e^{-z} \right| \leqslant C_R z.
\end{equation}
\end{lemma}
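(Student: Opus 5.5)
Since the section treats Type (II), we have $R_1(Jz)=R(Jz)$ and $R_2(Jz)=R((J-1)z)$. Both claims are then statements about the ratio $R(s-h)/R(s)$ with $s=Jz$ and $h=z=s/J$. Write
\begin{equation*}
\frac{R_2(Jz)}{R_1(Jz)} = \exp\Big(\log R(s-h)-\log R(s)\Big) = \exp\Big(-\int_{s-h}^{s} \frac{R'(\sigma)}{R(\sigma)}\,d\sigma\Big).
\end{equation*}
This is legitimate because $R>0$ on $[0,\infty)$ by Assumption~\ref{assum:1}(i). The plan is to control the logarithmic derivative $\psi(\sigma):=-R'(\sigma)/R(\sigma)$ on $[0,\infty)$.

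\textbf{First claim.} Since $R$ is rational, positive on $[0,\infty)$, and tends to a limit in $[0,1)$ at infinity, $\psi$ is continuous on $[0,\infty)$. As $\sigma\to\infty$, $\psi(\sigma)$ behaves like $m/\sigma$, where $m\ge 0$ is the order of decay of $R$ (e.g.\ $R\sim c\sigma^{-m}$), or like $O(\sigma^{-2})$ if $R$ tends to a nonzero constant. Hence $|\psi(\sigma)|\le C/(1+\sigma)$ on $[0,\infty)$, and this is the key structural fact. It gives
\begin{equation*}
\Big|\int_{s-h}^{s}\psi\Big|\le \int_{s(1-1/J)}^{s}\frac{C\,d\sigma}{1+\sigma}\le \frac{Cs/J}{1+s(1-1/J)}\le \frac{C'}{J}
\end{equation*}
uniformly in $s\ge0$ and $J\ge 10$. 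The exponent is therefore bounded by $C/J\le C/10$, and $|e^x-1|\le C|x|$ on bounded sets yields $|R_2/R_1-1|\le C/J$. The same argument applies to the exact solver $R=\e^{-s}$: there $\psi\equiv1$ and the ratio is $\e^{z}$, so the bound $C/J$ cannot hold uniformly for large $z$. This case must therefore be excluded or treated separately; the lemma implicitly concerns rational $R$, as stated in Assumption~\ref{assum:1}, and I will note this.

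\textbf{Second claim.} Compare the exponent with $z=h$:
\begin{equation*}
\int_{s-h}^{s}\psi(\sigma)\,d\sigma - h=\int_{s-h}^{s}(\psi(\sigma)-1)\,d\sigma .
\end{equation*}
Since $\psi$ is bounded on $[0,\infty)$, this is at most $(\|\psi\|_\infty+1)h$. The exponents of both $R_2/R_1$ and $\e^{z}$ lie in a bounded set for $z\le z_0<1/2$, so the mean value theorem gives
\begin{equation*}
\big|R_2/R_1-\e^{z}\big|\le \e^{C}\,(\|\psi\|_\infty+1)\,z=C_R z.
\end{equation*}
The reciprocal estimate follows identically with $-h$ in place of $h$. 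The restriction $z\le z_0$ is used only to keep $\e^{\pm z}$ bounded (and presumably matches where the BDF2 roots $r_{1,2}$ are real). A sharper argument using $\psi(0)=1$, from $R'(0)=-1$, would give $O(z^2)$ for small $s$, but it is not needed.

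\textbf{Main obstacle.} The main obstacle is justifying the uniform decay $|\psi(\sigma)|\le C/(1+\sigma)$ from the assumptions alone. I will argue it via rationality: write $R=P/Q$ with no zeros of $P$ or $Q$ on $[0,\infty)$, so that $\psi=Q'/Q-P'/P$ is a difference of two rational functions, each $O(1/\sigma)$ at infinity. This is then checked directly for the listed CPs (BE, LIIIC2, LIIIC2(2)).
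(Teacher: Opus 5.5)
Your proof is correct. For the second estimate \eqref{eqn:e^z} it is essentially the paper's argument: the paper also sets $\phi=-\log R$, bounds $\phi'=\psi$ on $[0,\infty)$ using $\phi'(0)=1$, $\phi'(\infty)=0$ and continuity, and applies the mean value theorem to $\e^{\phi(Jz)-\phi((J-1)z)}-\e^{z}$.

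For the first estimate the paper takes a different route. It writes the ratio as $f(\alpha,u)=R(\alpha w)/R(w)$ with $\alpha=1-1/J\in[0.9,1)$, $w=Jz$ and $u=w/(1+w)$. It then argues that $\partial_\alpha f = wR'(\alpha w)/R(w)$ extends continuously to the compact square $[0.9,1]\times[0,1]$, with limit $(n-m)\alpha^{n-m-1}$ as $u\to 1$. The mean value theorem in $\alpha$ around $f(1,u)=1$ then gives $|f-1|\leqslant M/J$.

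You instead make the underlying fact explicit, namely the decay $|\psi(\sigma)|\leqslant C/(1+\sigma)$, and integrate it over $[s(1-1/J),s]$. This yields $C/J$ directly and treats both claims through the single function $\psi=-R'/R$. The paper's compactness argument contains the same information: boundedness of $\sigma\psi(\sigma)$, i.e.\ at most polynomial decay of $R$, is hidden in the continuity of the extension at $u=1$. Your justification via $\psi=Q'/Q-P'/P$, with $P,Q$ zero-free on $[0,\infty)$ after cancellation, is a cleaner way to see it and gives a more transparent constant.

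Your observation that the first bound fails for $R(s)=\e^{-s}$, where the ratio is exactly $\e^{z}$, is correct. The paper excludes this case only implicitly, by starting from $R=P/Q$, consistent with Assumption~\ref{assum:1} referring to a rational $R$.

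One aside in your proposal is inaccurate, though unused. Using $\psi(0)=1$ only gives $\int_{s-h}^{s}(\psi-1)\,d\sigma=O(sz)=O(Jz^2)$, not $O(z^2)$.
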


\begin{proof}
We first prove the first inequality. Recall that $R_i(s)=R\left( {(J-(i-1))} s/J \right)$ for $i=1,2$ and assume $R(z) = P(z)/ Q(z)$, with $\deg P = n \leqslant \deg Q = m$. Consider   ${R_2(Jz)}/{R_1(Jz)} = R(\alpha Jz)/{R(Jz)}$, where $\alpha = 1 -
{J}^{-1} \in [0.9,1)$. Let $w = Jz >0$. Let $r=r(\alpha,w)={R(\alpha w)}/{R(w)}$. Transform the domain
via $u = {w}/({1+w}) \in [0,1)$, so $w={u}/{(1-u)}$. We define $f(\alpha,u)=r\left(\alpha,{u}/({1-u})\right)$ and $f$ can be extended 
continuously to the compact set $[0.9,1]\times [0,1]$ with $f(\alpha,0)=1$, $f(\alpha,1)=\alpha^{n-m}$, $f(1,u)=1$. Now, consider $\partial f
/ \partial \alpha$. For $u<1$,
\begin{equation*}
\frac{\partial f}{\partial \alpha} =  \frac{wQ\left(w\right) \left[ P'\left(\alpha w\right) 
Q\left(\alpha w\right) - P\left(\alpha w\right) Q'\left(\alpha w\right) \right]}{P\left(w\right) Q\left(\alpha w\right)^2}.
\end{equation*}
This expression extends continuously to $[0.9,1]\times [0,1]$. As $u\rightarrow 0$, $|{\partial f}/{\partial \alpha}| \leqslant C u$, thus tends to 
$0$. As $u\rightarrow 1^-$, it tends to $(n-m)\alpha^{n-m-1}$, which is bounded for $\alpha \in [0.9,1]$. At $\alpha=1$, it is well-defined. Thus, 
$\partial f / \partial \alpha$ is continuous on the compact set, hence bounded: $\left| \partial f/{\partial \alpha} \right| \leqslant M$ for 
some $M$. By the mean value theorem, for each $u$,
\begin{equation*}
f(\alpha,u) - f(1,u) = (\alpha - 1) {\partial f (\xi_u,u) }/{\partial \alpha},
\end{equation*}
for some $\xi_u \in (\alpha,1)$. Thus, $\left| f(\alpha,u) - 1 \right| \leqslant M (1 - \alpha) = {M}/{J}$. Setting $C=M$ completes the proof for
the first inequality. {For the second inequality, we consider the auxiliary function $\phi(s) = -\log R(s)$, so that $\phi'(s) = -R'(s)/R(s)$. By Assumption~\ref{assum:1} (i) and the fact that $R$ is a rational function, we have $\phi'(0)=1$ and $\phi'(\infty)=0$. The continuity of $\phi'(s)$ implies $|\phi'(s)|\leq K$ for some constant $K>0$. The desired result follows from
\begin{align*}
    |R_2(Jz)/R_1(Jz)-\e^z|=|\e^{\phi(Jz)-\phi((J-1)z)}-\e^z| = \e^z |\e^{(\phi'(\xi_z )-1)z}-1|\leqslant \e^{z_0}(K+1) \e^{(K+1)z_0} z,
\end{align*}
where $\xi_z \in ((J-1)z,Jz)$. The third inequality follows similarly.
}
\end{proof}

\begin{remark}
    The constant $C_R$ in \eqref{eqn:e^z} becomes small when $R(z)$ provides a good approximation to $\e^{-z}$ and $z_0$ is small. 
\end{remark}

\begin{corollary}\label{coro:1}
    There exist constants $c$ and $C$, independent of $J$ and $z$, such that, for $J\geqslant 10$, $c \leqslant {|R_1(Jz)|}/{|R_2(Jz)|} \leqslant C.$
\end{corollary}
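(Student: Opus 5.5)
The plan is to deduce the two-sided bound directly from the first inequality of Lemma~\ref{lem:1}, which controls the deviation of the ratio from $1$ uniformly in $z\geqslant 0$ and $J\geqslant 10$. Note first that $R_1(Jz)=R(Jz)$ and $R_2(Jz)=R((J-1)z)$ are both strictly positive by Assumption~\ref{assum:1}(i). Hence the quotients $R_1(Jz)/R_2(Jz)$ and $R_2(Jz)/R_1(Jz)$ are well defined and positive, and the absolute values in the statement can be dropped.

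\textbf{Upper bound.} Lemma~\ref{lem:1} gives $|R_2(Jz)/R_1(Jz)-1|\leqslant C_0/J$ with $C_0$ independent of $J$ and $z$. Then $R_2(Jz)/R_1(Jz)\geqslant 1-C_0/J$. If $C_0<10$, this is at least $1-C_0/10>0$ for all $J\geqslant 10$. Inverting gives $R_1(Jz)/R_2(Jz)\leqslant (1-C_0/10)^{-1}$.

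If $C_0$ is not small enough for this, the main obstacle is that the lemma's constant $M=\sup|\partial f/\partial\alpha|$ might be large. In that case I would return to the compactness argument in the proof of Lemma~\ref{lem:1}. The extended function $f(\alpha,u)=R(\alpha w)/R(w)$, with $w=u/(1-u)$, is continuous on the compact set $[0.9,1]\times[0,1]$. It is also strictly positive there:
\begin{itemize}
\item for $u<1$, positivity follows from $R>0$ on $\mathbb{R}^+$;
\item at $u=1$ the value is $\alpha^{n-m}\in[1,0.9^{n-m}]$, which is positive.
\end{itemize}
A continuous positive function on a compact set is bounded between two positive constants $c_1\leqslant f\leqslant C_1$. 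Taking $\alpha=1-1/J\in[0.9,1)$ then gives $c_1\leqslant R_2(Jz)/R_1(Jz)\leqslant C_1$ for all $z\geqslant0$ and $J\geqslant10$. This route avoids any smallness requirement and is the argument I would actually write.

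\textbf{Lower bound and conclusion.} The lower bound comes the same way: $R_2(Jz)/R_1(Jz)\leqslant 1+C_0/J\leqslant 1+C_0/10$, so $R_1(Jz)/R_2(Jz)\geqslant(1+C_0/10)^{-1}$. Equivalently, it follows from $f\leqslant C_1$. Setting $c=C_1^{-1}$ and $C=c_1^{-1}$ then yields $c\leqslant |R_1(Jz)|/|R_2(Jz)|\leqslant C$, with both constants independent of $J$ and $z$.
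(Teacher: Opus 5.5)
Your proof is correct, and it rests on the same fact the paper uses: $f(\alpha,u)=R(\alpha w)/R(w)$ is continuous and strictly positive on a compact rectangle, so it is bounded between two positive constants. The paper splits the range of $J$. It uses Lemma~\ref{lem:1} to get $1/2\leqslant R_1(Jz)/R_2(Jz)\leqslant 3/2$ for $J\geqslant J_0$, and it invokes compactness only on $[0.9,1-J_0^{-1}]\times[0,1]$ for the finitely many remaining $J$. Continuity of $f$ on the full closed rectangle $[0.9,1]\times[0,1]$ (including $\alpha=1$, where $f\equiv 1$) is already asserted in the proof of Lemma~\ref{lem:1}, so your single compactness argument makes both that split and your preliminary $C_0<10$ detour unnecessary.
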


\begin{proof}
    By Lemma~\ref{lem:1}, there exists some $J_0\in \mathbb{N}^+$ such that for all $J\geqslant J_0$ and all $ z \in \mathbb{R}^+$, $ 1/2 \leqslant {|R_1(Jz)|}/{|R_2(Jz)|} \leqslant 3/2.$ Combined with the fact that $f(\alpha,w)={R(\alpha w)}/{R(w)}$ is a positive function on the compact set $[0.9,1-{J_0}^{-1}]\times [0,1]$, we obtain the desired result.
\end{proof}

\begin{lemma}\label{lem:2}
There exists a constant $C$, independent of $z$ and $J$, such that for $z \in \mathbb{R}^+$ and $J \geqslant 10$,  
\begin{equation*}
\frac{|F_{1}^{\left( J \right)}\left( z \right)R_2(Jz)R_1^{-1}(Jz) +F_{2}^{\left( J \right)}\left( z \right) -\e^{-Jz}|}{1-\e^{-Jz}} \leqslant 
\frac{C}{J},\quad \frac{|{\eta}_2\left( z,J \right) |}{1-\e^{-Jz}} \leqslant \frac{C}{J},\quad \frac{|{\eta}_1(z,J)|}{1-\e^{-Jz}} \leqslant 
\frac{C}{J}.
\end{equation*}
\end{lemma}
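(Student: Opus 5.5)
We would prove all three estimates by substituting the closed forms $F_1^{(J)}=c(z)\,(r_2^J-r_1^J)$ and $F_2^{(J)}=(r_1^{J+1}-r_2^{J+1})/(r_1-r_2)$, with $c=r_1r_2/(r_1-r_2)$, into each numerator. Writing $\rho=\rho(z,J):=R_2(Jz)/R_1(Jz)$, each numerator becomes a combination $a(z)\,r_1^{J}+b(z)\,r_2^{J}-\e^{-Jz}$ (with $J$ or $J-1$ in the exponents). We would then split $z\in\mathbb{R}^+$ into two regimes at a fixed $z_0\in(0,1/2)$, say $z_0=1/4$. Throughout we use that $1-\e^{-Jz}\geqslant c\min(1,Jz)$.

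\emph{Regime $z\geqslant z_0$.} Here $1-\e^{-Jz}\geqslant 1-\e^{-10z_0}$ is bounded below, and $|\rho|$ is bounded by Corollary~\ref{coro:1}. For $z<1/2$ the roots are real with $0<r_2<r_1\leqslant q<1$, where $q$ depends only on $z_0$. For $z>1/2$ they are complex conjugate with $|r_{1,2}|=(3+2z)^{-1/2}<3^{-1/2}$. The point $z=1/2$, where $r_1=r_2$, is harmless because the quotients $(r_1^m-r_2^m)/(r_1-r_2)$ are polynomials in $r_1,r_2$, bounded by $m\max|r_i|^{m-1}$. Hence every term is $O(Jq^J)+O(\e^{-Jz_0})$, which is at most $C/J$.

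\emph{Regime $z<z_0$ and the $r_1$-part.} We use $r_1(z)=\e^{-z}+O(z^2)$, which follows from the BDF2 consistency $r_1=\e^{-z}+O(z^3)$, together with $r_1\leqslant \e^{-c z}$. By Lemma~\ref{lem:1}, $\rho=\e^{z}+O(z)$ and $\rho^{-1}=\e^{-z}+O(z)$. In the first quantity, the coefficient of $r_1^J$ is $r_1(1-r_2\rho)/(r_1-r_2)=1+O(z)$. Therefore the $r_1$-part minus $\e^{-Jz}$ is bounded by
\[
C\bigl(Jz^2+z\bigr)\e^{-cJz}.
\]
Dividing by $\min(1,Jz)$ and using $\sup_{x>0}x\,\e^{-cx}/\min(1,x)<\infty$ and $\sup_{x>0}x^2\e^{-cx}/\min(1,x)<\infty$ (with $x=Jz$) gives the bound $C/J$. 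The same expansion applies to $\eta_2$ and $\eta_1$. There the $r_1^{J}$ and $r_1^{J-1}$ contributions of $F^{(J-1)}$ and $F^{(J)}\rho^{\pm1}$ cancel at leading order, because $\rho^{\pm1}$ matches $r_1^{\mp1}$ up to $O(z)$. This again leaves $O((z+Jz^2)\e^{-cJz})$.

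\emph{The $r_2$-part near $z=0$, which is the expected main obstacle.} Here $|r_2|^J\leqslant 3^{-J}(1+Cz)^J$ is exponentially small, but the denominator $1-\e^{-Jz}$ degenerates as $z\to0$. It therefore suffices to show that each $r_2$-coefficient vanishes to first order at $z=0$, i.e. is $O(z)$, since
\[
\frac{z\,|r_2|^J}{\min(1,Jz)}\leqslant \frac{C}{J}.
\]
For the first quantity this holds because $\rho(0)=1$ and $r_1(0)=1$, so $r_2(r_1\rho-1)/(r_1-r_2)=O(z)$. For $\eta_2$ and $\eta_1$ we would collect the $r_2^{J-1}$ coefficients of $F_i^{(J-1)}$ and of $F_i^{(J)}\rho^{\pm1}$ exactly and evaluate them at $z=0$, using $R_1(0)=R_2(0)=1$. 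We would first check whether this exact $z=0$ cancellation holds, since the $O(z)$ vanishing is essential. If the coefficient at $z=0$ does not vanish identically, the first thing to try is to pair the $r_2$-terms with the residual $r_1$-terms, using the identity $F_1^{(m)}(0)+F_2^{(m)}(0)=1$ (exactness of BDF2 on constants), so that the combined expression vanishes at $z=0$. We would then bound the derivative in $z$ uniformly in $J$ to obtain the factor $z$.
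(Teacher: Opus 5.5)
Your split at $z_0=1/4$ and the decomposition into $r_1$- and $r_2$-parts are close to the paper's argument: exponential decay and Corollary~\ref{coro:1} for large $z$, and Lemma~\ref{lem:1} with BDF2 consistency for small $z$. They do prove the first two bounds, because the $r_2$-coefficients $r_2(r_1\rho-1)/(r_1-r_2)$ and $\frac{r_2}{r_1-r_2}\bigl(r_1+r_2-\rho^{-1}-r_1r_2\rho\bigr)$ both vanish at $z=0$.

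The check you postponed fails, however, for $\eta_1=F_1^{(J-1)}-F_1^{(J)}\rho$. Its $r_2^{J-1}$-coefficient is $\frac{r_1r_2}{r_1-r_2}(1-\rho r_2)$, which equals $\frac13$ at $z=0$. Its $r_1^{J-1}$-coefficient $-\frac{r_1r_2}{r_1-r_2}(1-\rho r_1)$ vanishes there. Hence $\eta_1(0,J)=F_1^{(J-1)}(0)-F_1^{(J)}(0)=\frac12\bigl(3^{1-J}-3^{-J}\bigr)=3^{-J}\neq0$. Your fallback cannot repair this. $\eta_1$ contains no $F_2$-terms, so the identity $F_1^{(m)}(0)+F_2^{(m)}(0)=1$ never enters, and it is the full expression, not merely its $r_2$-part, that is nonzero at $z=0$.

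So the obstruction lies in the statement itself. By continuity, $|\eta_1(z,J)|/(1-\e^{-Jz})\geqslant 3^{-J}/(2Jz)$ for all sufficiently small $z>0$. The third inequality therefore cannot hold uniformly in $z\in\mathbb{R}^+$, for either CP type, since $\rho(0)=1$ in both cases. The paper's proof breaks at the same point: its final bound for $\mathrm{II}$ contains the term $C(Jzr_2)^{-1}(r_2\e^{z})^J$, which is of order $3^{-J}/(Jz)$ and is not $O(1/J)$ as $z\to0$.

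What your argument does yield is the unweighted bound $|\eta_1(z,J)|\leqslant C/J$. It also gives the weighted bound $|\eta_1(z,J)|/(1-\e^{-Jz})\leqslant C/J+C(2/5)^J/(Jz)$, using $r_2=1/(2+\sqrt{1-2z})\leqslant 2/5$ on $[0,1/4]$. Thus the third estimate holds only for $z$ bounded away from $0$, e.g.\ $z\geqslant(2/5)^J$. The steps in the proof of Theorem~\ref{thm:lim} that use $|\eta_1|/(1-|R(Jz)|)\leqslant C/J$ must be rechecked near $z=0$.
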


\begin{proof}
We denote the left-hand sides of the first, second, and third inequalities by $H_1(z,J)$, $H_2 (z,J)$ and $H_3(z,J)$, respectively. Recall that ${\eta}_2$ and ${\eta}_1$ are defined in \eqref{eqn:g_0}. Then, we consider two cases: (i) $z \in 
(1/4,\infty)$; (ii) $z \in (0,1/4)$.

\medskip
 \textbf{Step (i):} For $z\in (1/4,\infty)$, all three functions decay exponentially with $J$, thus can be bounded by polynomial decay. 
Combined with ${|R_2(Jz)|}/{|R_1(Jz)|} \leqslant C$ from Corollary~\ref{coro:1}, $H_1$ can be bounded as:
\begin{equation*}
    H_{1}\left( z,J \right) \leqslant 2 \left( C|F_{1}^{\left( J \right)}\left( z \right) |+|F_{2}^{\left( J \right)}\left( z \right) |+\e^{-Jz} \right) \leqslant \left( 0.8 \right)^{J} +C(|F_{1}^{\left( J \right)}\left( z \right) |+|F_{2}^{\left( J \right)}\left( z \right) |).
\end{equation*}
Since $|r_1(z)|,|r_2(z)|< 0.78$ when $z\geqslant 1/4$, we have
\begin{align*}
&\quad|F_{1}^{(J)}(z) + F_{2}^{(J)}(z)| \leqslant \frac{|r_{1}r_{2}|}{|r_{1}-r_{2}|} \cdot |r_{2}^{J}-r_{1}^{J}| + \frac{|r_{1}^{J+1}-r_{2}^{J+1}|}{|r_{1}-r_{2}|} \\
&\leqslant |r_{1}r_{2}|\sum_{i=0}^{J-1} |r_{2}^{J-1-i}r_{1}^{i}| + \sum_{i=0}^{J} |r_{1}^{J-i}r_{2}^{i}| \leqslant J0.78^{J+1} + (J+1)0.78^{J+1}\leqslant  30(0.8)^{J+1}.
\end{align*}
Finally, we obtain $H_1(z,J) \leqslant C\left( 0.8 \right)^{J} \leqslant C/J$, where the constant $C$ is independent of $z$ and $J$. Combined with $c \leqslant {|R_2(Jz)|}/{|R_1 (Jz)|}\leqslant C$ from Corollary~\ref{coro:1}, $H_2(z,J)$ can be bounded as 
\begin{align*}
 H_{2}(z,J)&\leqslant 2\left( |F_{1}^{\left( J-1 \right)}\left( z \right) |+C|F_{2}^{\left( J-1 \right)}\left( z \right) |+C|F_{1}^{\left( J \right)}\left( z \right) |+|F_{2}^{\left( J \right)}\left( z \right) | \right)\\ 
&\leqslant C(\left( 0.8 \right)^{J} +\left( 0.8 \right)^{J+1}) \leqslant {C}/{J}.
\end{align*}
$H_3(z,J) \leqslant C/J$ holds similarly.

 \textbf{Step (ii):}
 We consider the case when $z\in (0,1/4)$. By \eqref{eqn:e^z}, we obtain
 \begin{equation}\label{eqn:H1}
      \begin{aligned}
H_{1}\left( z,J \right) &\leqslant \frac{|F_{1}^{\left( J \right)}\left( z \right) |}{1-\e^{-Jz}} |\frac{R_{2}\left( Jz \right)}{R_{1}\left( Jz \right)} -\e^{z}|+\frac{|F_{1}^{\left( J \right)}\left( z \right) \e^{z}+F_{2}^{\left( J \right)}\left( z \right) -\e^{-Jz}|}{1-\e^{-Jz}} \\ 
&\leqslant \frac{C_R}{J}\cdot \frac{Jz|F_1^{(J)}(z)|}{1-\e^{-Jz}} + \frac{|F_{1}^{\left( J \right)}\left( z \right) \e^{z}+F_{2}^{\left( J \right)}\left( z \right) -\e^{-Jz}|}{1-\e^{-Jz}} =: \frac{C_R}{J}\cdot {\rm I_1}+ {\rm I_2}.
\end{aligned}
 \end{equation}

By \cite[Lemma 10.3]{Thomee:2006}, ${\rm I_1}$ can be bounded by a constant $C$ and simple calculus yields 
\begin{equation}\label{eqn:I_2}
    {\rm I_2} \leqslant {0.55}/{J^2}.
\end{equation}

The desired estimate for $H_1$ follows from the bounds for ${\rm I_1}$ and ${\rm I_2}$. For $H_2(z,J)$, similar to the estimate for $H_1$, we apply \eqref{eqn:e^z} again and obtain
\begin{align*}
 &\quad H_{2}\left( z,J \right) \leqslant \frac{|F_{2}^{\left( J-1 \right)}\left( z \right) |}{1-\e^{-Jz}} |\frac{R_{1}\left( Jz \right)}{R_{2}\left( Jz \right)} -\e^{-z}|+\frac{|F_{1}^{\left( J \right)}\left( z \right) |}{1-\e^{-Jz}} |\frac{R_{2}\left( Jz \right)}{R_{1}\left( Jz \right)} -\e^{z}|\\ 
 & +\frac{|F_{1}^{\left( J-1 \right)}\left( z \right) +F_{2}^{\left( J-1 \right)}(z)\e^{z}-F_{1}^{\left( J \right)}(z )\e^{-z}-F_{2}^{\left( J \right)}\left( z \right) |}{1-\e^{-Jz}} \leqslant \frac{C}{J} +\frac{C}{J} +{\frac{C}{J}} =\frac{C}{J}.
\end{align*}
Next, we estimate $H_3(z,J)$. Similar to the estimate for $H_1$, we obtain
\begin{align*}
H_{3}\left( z,J \right) &\leqslant \frac{|F_{1}^{\left( J \right)}\left( z \right) |}{1-\e^{-Jz}} |\frac{R_{2}\left( Jz \right)}{R_{1}\left( Jz \right)} -\e^{z}|+\frac{|F_{1}^{\left( J-1 \right)}\left( z \right) -F_{1}^{\left( J \right)}\left( z \right) \e^{z}|}{1-\e^{-Jz}}\\ 
&\leqslant \frac{C_{R}}{J}\cdot \frac{Jz|F_1^{(J)}(z)|}{1-\e^{-Jz}} +{\rm II}\leqslant \frac{CC_{R}}{J} + {\rm II}.
\end{align*}
Since $1-\e^{-z} \geqslant z\e^{-z}$  when $z>0$, after some basic calculus, we obtain
\begin{align*}
 {\rm{II}} &\leqslant \frac{|\left( 1-r_{2}\e^{z} \right) r_{2}^{J-1}-r_{1}^{J-1}\left( 1-r_{1}\e^{z} \right) |}{2\sqrt{1-2z}Jz\e^{-Jz}} \leqslant \frac{C}{Jz} \e^{Jz}\left( r_{1}^{J-1}|1-r_{1}\e^{z}|+r_{2}^{J-1}|1-r_{2}\e^{z}| \right)\\ &=: {C}{(Jz)}^{-1} \e^{Jz}\left( r_{1}^{J-1}{\rm II_{1}}+r_{2}^{J-1}{\rm II_{2}} \right).
\end{align*}
Since BDF2 is a second order scheme, we have ${\rm II_{1}}\leqslant \e^{z}|\e^{-z}-r_{1}|\leqslant Cz^{3}$ for some $C>0$ and $z \in (0,1/4)$. Also, ${\rm II_2}$ is bounded. Then, we obtain ${\rm II}\leqslant {Cz^{2}}{(Jr_{1})}^{-1} \left( r_{1} \e^{z} \right)^{J} +{C}{(Jz r_{2})}^{-1}\left( r_{2} \e^{z} \right)^{J}$. Since $0.76 \leqslant r_1\leqslant \e^{-z}$ and $ 0.3 \leqslant r_2 \leqslant 0.5\e^{-z}$ when $z\in(0, 1/4)$, ${\rm II}$ can be bounded by $C/J$.

\end{proof}

\begin{remark}
For the CPs (I)~\eqref{eqn:cp}, we have $R_2=R_1$ and the error term $H_1$ takes the form $H_1(z,J)={|F_{1}^{\left( J \right)}\left( z 
\right) +F_{2}^{\left( J \right)}\left( z \right) -\e^{-Jz}|}/({1-\e^{-Jz}})$, which can be bounded by $C/J$. In contrast, the error 
$H_1$ in~\eqref{eqn:H1} for CPs (II) is typically much smaller. Its first term is controlled by $C_R/J$ and can be made arbitrarily small by 
choosing $R(z)$ to approximate $\e^{-z}$ closely. The bound in \eqref{eqn:I_2} decays as $O(1/J^2)$ and $I_2$ is therefore negligible for large $J$. 
\end{remark}

\begin{corollary}\label{coro:2}
There exists a constant $C$, independent of $z$ and $J$, such that, for $z \in (0,\infty)$ and $J \geqslant 10$,  
\begin{equation*}
\frac{|{\eta}_4(z,J)|}{1-\e^{-Jz}} \leqslant C\quad \text{and}\quad \frac{|F_1^{(J)}(z)R_2(Jz)R_1^{-1}(Jz)+F_2^{(J)}(z)+\e^{-Jz}-2R(Jz)|}{1-\e^{-Jz}} \leqslant C.
    \end{equation*}
\end{corollary}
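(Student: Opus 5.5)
Both bounds follow from the first estimate of Lemma~\ref{lem:2} and the triangle inequality, once $|\e^{-Jz}-R(Jz)|$ is controlled relative to $1-\e^{-Jz}$. Recall that $\eta_4(z,J)=F_1^{(J)}(z)R_2(Jz)R_1^{-1}(Jz)+F_2^{(J)}(z)-R_1(Jz)$ with $R_1(Jz)=R(Jz)$. Write $\Phi:=F_1^{(J)}(z)R_2(Jz)R_1^{-1}(Jz)+F_2^{(J)}(z)$. Then
\begin{equation*}
\eta_4=(\Phi-\e^{-Jz})+(\e^{-Jz}-R(Jz)),\qquad \Phi+\e^{-Jz}-2R(Jz)=(\Phi-\e^{-Jz})+2(\e^{-Jz}-R(Jz)).
\end{equation*}
Dividing by $1-\e^{-Jz}$, Lemma~\ref{lem:2} bounds the first piece in each case by $C/J\leqslant C/10$.

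It remains to show
\begin{equation*}
\sup_{s>0}\frac{|\e^{-s}-R(s)|}{1-\e^{-s}}<\infty,\qquad s=Jz.
\end{equation*}
This is a one-variable statement about the coarse stability function alone and is independent of $J$. We split into two ranges of $s$.

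\textbf{Range $s\geqslant 1$.} The denominator is at least $1-\e^{-1}$. By Assumption~\ref{assum:1}(i), $R(s)\in(0,1)$, so the numerator is at most $1$.

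\textbf{Range $s\in(0,1)$.} Since $R(0)=1$ and $R'(0)=-1$, Taylor's theorem applied to the rational (hence smooth near $0$) function $R$ gives $|\e^{-s}-R(s)|\leqslant Cs^2$. We also have $1-\e^{-s}\geqslant s\e^{-s}\geqslant s/\e$. Hence the ratio is at most $C\e s\leqslant C$.

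This step is the main obstacle, though it is mild. The denominator vanishes at $s=0$, so the consistency conditions $R(0)=1$ and $R'(0)=-1$ are needed to cancel that singularity. In fact only $R(0)=1$ is required, since a Lipschitz bound $|\e^{-s}-R(s)|\leqslant Cs$ already suffices. Note that the ratio is exactly $|\gamma(s)|$ from \eqref{eqn:gamma} when $R\in(0,1)$, so its boundedness can alternatively be read off from the finiteness of $\gl$.

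Combining the two ranges gives both inequalities with a constant independent of $z$ and $J$ for $J\geqslant 10$. For Type (I) CPs the same argument works verbatim, using the Type (I) version of $H_1$ from the remark following Lemma~\ref{lem:2}.
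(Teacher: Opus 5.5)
Your proof is correct, and its skeleton matches the paper's. You split off $\Phi-\e^{-Jz}$, control it with the first estimate of Lemma~\ref{lem:2}, and bound the remaining coarse-consistency ratio $|\e^{-s}-R(s)|/(1-\e^{-s})$, $s=Jz$, uniformly in $s$.

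The difference is in that last step. The paper handles it in one line, $H_4\leqslant H_1+|\gamma(Jz)|\leqslant C$. Since $\gamma(s)$ has denominator $1-R(s)$, this tacitly uses $1-R(s)\leqslant 1-\e^{-s}$, i.e.\ $R(s)\geqslant \e^{-s}$. That inequality holds for BE, LIIIC2, LIIIC2(2) and the exact solver, but it is not implied by Assumption~\ref{assum:1}(i). For instance, $R(s)=1/(1+s+s^2)$ satisfies (i), yet $R(s)<\e^{-s}$ for small $s>0$. Your two-range estimate needs no such comparison: it uses only $R(0)=1$, boundedness of $R$ on $\mathbb{R}^+$, and smoothness at $0$. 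So it is the more robust version of the same step.

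For the same reason, your side remark that the ratio is \emph{exactly} $|\gamma(s)|$ is incorrect, because the denominators are $1-\e^{-s}$ versus $1-R(s)$. The alternative route via $\gl$ can be rescued by inserting the factor $(1-R(s))/(1-\e^{-s})$. This factor is bounded on $(0,\infty)$: it tends to $1$ as $s\to 0$ because $R'(0)=-1$, and it is at most $1/(1-\e^{-1})$ for $s\geqslant 1$. Your main argument never uses the remark, so it does not affect correctness.
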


\begin{proof}
We define $H_4(z,J) = {|{\eta}_4(z,J)|}/({1-\e^{-Jz}}).$ Then, we obtain $H_{4}\left( z,J \right) \leqslant H_{1}\left( z,J \right) +|\gamma \left( Jz \right) |\leqslant C,$ with $H_1$ defined in the proof for Lemma~\ref{lem:2} and $\gamma(s)$ defined in~\eqref{eqn:gamma}. The proof for the second inequality follows similarly. 
\end{proof}

\subsection{Error estimate}\label{sec:error estimate}
In the following theorems, we clarify the connection between the F-multistep parareal \ref{alg:m_para} with the plain parareal. The quantity $\gamma_{a}$ is the main component among all coefficients of $\xi_\ell$ in \eqref{eqn:cubic_ode}. The other coefficient functions will decay to zero as $J\to \infty$.
\begin{theorem}\label{thm:lim}
 There exists a constant $C$, independent of $z$ and $J$, such that, for $z \in \mathbb{R}^+$ and $J \geqslant 10$, 
\begin{align*}
&|{\gamma}_{a} \left( z,J \right) -\gamma(Jz)^{2} |\leqslant {C}/{J} \quad {\gamma}_{e} \left( z,J \right) , {\gamma}_{b} \left( z,J \right) \leqslant {C}/{J},\quad {\gamma}_{c} \left( z,J \right) {\gamma}_{d} \left( z,J \right) \leqslant {C}/{J^{2}}.
\end{align*}
\end{theorem}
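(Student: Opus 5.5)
The plan is to express every coefficient $\gamma_a,\dots,\gamma_e$ in terms of the normalized quantities $H_1,H_2,H_3,H_4$ from Lemma~\ref{lem:2} and Corollary~\ref{coro:2}, that is, $\eta_i/(1-\e^{-Jz})$. The geometric-series denominators $1-|G_1|=1-R(Jz)$ can then be compared with $1-\e^{-Jz}$. We write $w=Jz$ and $\rho(w)=(1-\e^{-w})/(1-R(w))$. By Assumption~\ref{assum:1}(i), $R(0)=1$ and $R'(0)=-1$, so $\rho(w)\to1$ as $w\to0$. Also $R(w)\in(0,1)$ and $R(\infty)<1$, while $\rho$ is continuous. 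Hence $0<c\le\rho\le C$ on $\mathbb{R}^+$. This uniform two-sided bound on $\rho$ is the first step. After it, each factor $|\eta_i|/(1-R)$ is bounded by $C\,|\eta_i|/(1-\e^{-w})$, so
\[
\frac{|\eta_1|}{1-R},\ \frac{|\eta_2|}{1-R}\le \frac CJ,\qquad \frac{|\eta_4|}{1-R}\le C,
\]
and $|\eta_3|=|F_1^{(J)}|\,R_2/R_1\le C\,|F_1^{(J)}|$ by Corollary~\ref{coro:1}. The bound $|F_1^{(J)}|\le C$ is uniform (stability of BDF2, \cite[Lemma 10.3]{Thomee:2006}).

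For the easy coefficients I would check powers directly.

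In $\gamma_e$, the first term is $|G_1\eta_2\eta_3|/(1-|G_1|)\le C\cdot C/J$. The second term is $|\eta_2\eta_3+\eta_1^2|\le C/J+C/J^2$, using $1-\e^{-w}\le1$.

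In $\gamma_b=\frac{|\eta_2\eta_3|}{1-R}\bigl(|\eta_1|+\frac{|\eta_4|}{1-R}\bigr)^2$, the prefactor is $O(1/J)$ and the bracket is $O(1)$, so $\gamma_b\le C/J$.

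For $\gamma_d$, the terms $|\eta_2\eta_4|/(1-R)$ and $|\eta_2(\eta_1+\eta_4)|$ are $O(1/J)$, because $|\eta_4|\le C(1-\e^{-w})\le C$.

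The product $\gamma_c\gamma_d$ needs care, since the first term of $\gamma_c$ carries $(1-R)^{-3}$. Each term of $\gamma_c$ contains $\eta_2$ or $\eta_1$ together with denominators. I would factor as
\[
\frac{|\eta_2\eta_3^2\eta_4|}{(1-R)^3}=\frac{|\eta_2|}{1-R}\cdot\frac{|\eta_4|}{1-R}\cdot\frac{\eta_3^2}{1-R}.
\]
The last factor blows up as $w\to0$, which the paper itself notes. This is where $\gamma_d$ compensates: $\gamma_d\lesssim|\eta_2|\,(1+|\eta_4|/(1-R))\le C(1-\e^{-w})/J$. Multiplying, the stray $(1-R)^{-1}$ is cancelled by the factor $1-\e^{-w}$ from $\gamma_d$, up to $\rho$. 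That gives $\gamma_c\gamma_d\le C/J^2$. The other three terms of $\gamma_c$ are treated the same way; each has at most the same singular power.

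The main obstacle is $\gamma_a$. Write $\eta_4=\eta_{4}^{\ast}+(\e^{-w}-R(w))$, where $\eta_4^\ast=F_1^{(J)}R_2R_1^{-1}+F_2^{(J)}-\e^{-w}$, so $|\eta_4^\ast|\le C(1-\e^{-w})/J$ by Lemma~\ref{lem:2}. Since $G_1=R(w)>0$, summing gives
\[
\sum_m R^m(m+1)\eta_4^2=\frac{\eta_4^2}{(1-R)^2}.
\]
The cross term $\sum_m R^m\eta_2\eta_3$ is $O(1/J)$. Hence
\[
\Bigl|\gamma_a-\frac{\eta_4^2}{(1-R)^2}\Bigr|\le \frac{|\eta_2\eta_3|}{1-R}\le \frac CJ.
\]
The triangle-inequality loss inside the absolute value is bounded by this same term. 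Finally,
\[
\frac{\eta_4^2}{(1-R)^2}-\gamma(w)^2=\frac{\eta_4^\ast\,\bigl(\eta_4^\ast+2(\e^{-w}-R)\bigr)}{(1-R)^2}.
\]
This is bounded by $\frac{C}{J}\rho\bigl(\rho/J+2|\gamma(w)|\bigr)\le C/J$, using $\gamma$ bounded (Assumption~\ref{assum:1}(ii)) and $\rho\le C$. The delicate points are the uniform bound on $\rho$ near $w=0$, and keeping track of the $(1-R)^{-1}$ singularities so that they cancel only in the product $\gamma_c\gamma_d$.
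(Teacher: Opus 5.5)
Your proposal is correct and follows essentially the paper's proof. It uses the same difference-of-squares comparison of $\gamma_a$ with $\gamma(Jz)^2=\sum_m R^m(m+1)(R-\e^{-Jz})^2$, the same normalization by $1-\e^{-Jz}$ via Lemma~\ref{lem:2} and Corollaries~\ref{coro:1}--\ref{coro:2}, and the same use of the factor $\eta_2$ in $\gamma_d$ to cancel the extra $(1-R)^{-1}$ in the first term of $\gamma_c$. Your explicit two-sided bound on $\rho(w)=(1-\e^{-w})/(1-R(w))$ spells out a step the paper uses silently; its upper bound needs $R(\infty)<1$, which follows from Assumption~\ref{assum:1}(ii), since otherwise $|\gamma(s)|\to\infty$.
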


\begin{proof}
We prove these claims one by one.

\medskip   
 \textbf{First inequality:} Recall that ${\gamma}_a$ and $\gamma$ are defined in \eqref{eqn:g_a} and \eqref{eqn:gamma}, respectively. We observe that $\gamma \left( Jz \right) ^{2}=\sum_{m=0}^{\infty} |R\left( Jz \right) |^{m}\left( m+1 \right) |R\left( Jz \right) -\e^{-Jz}|^2.$ Then, 
\begin{align*}
&|{\gamma}_a(z,J)-\gamma(Jz)^2|\leqslant \sum_{m=0}^{\infty} |R\left( Jz \right) |^{m}\left( m+1 \right) |{\eta}_{4} \left( z,J\right) -\left( R\left( Jz \right) -\e^{-Jz} \right) |\\
&\quad \cdot |{\eta}_{4} \left( z,J \right) +\left( R\left( Jz \right) -\e^{-Jz} \right) | +\sum_{m=0}^{\infty} |R\left( Jz \right) |^{m}|{\eta}_{3} \left( z,J \right) {\eta}_{2} (z,J)| \\
&\leqslant \frac{|F_1^{(J)}(z)R_2(Jz)R_1^{-1}(Jz)+F_2^{(J)}(z)+\e^{-Jz}-2R(Jz)|}{1-\e^{-Jz}}\\
&\quad \cdot \frac{|F_1^{(J)}(z)R_2(Jz)R_1^{-1}(Jz)+F_2^{(J)}(z)-\e^{-Jz}|}{1-\e^{-Jz}} \cdot \left(\frac{1-\e^{-Jz}}{1-|R(Jz)|}\right)^2\\
&\quad + |{\eta}_3(z)|\cdot \frac{|{\eta}_2 (z,J)|}{1-\e^{-Jz}}\cdot \frac{1-\e^{-Jz}}{1-|R(Jz)|}.
\end{align*}
By Corollary~\ref{coro:2}, Lemma~\ref{lem:2}, and the boundedness of $|{\eta}_3|$ from Corollary~\ref{coro:1}, we obtain $|{\gamma}_a(z,J)-\gamma(Jz)^2| \leqslant C/ J$. 

\medskip
 \textbf{Second inequality:} Recall that ${\gamma}_e$ is defined in \eqref{eqn:g_e}, and $|{\eta}_3|$ is uniformly bounded for $z \in \mathbb{R}^+$ and $J\geqslant 10$; then, we have
\begin{equation*}
    {\gamma}_{e}(z,J)\leqslant \frac{C|{\eta}_2(z,J)|}{1-e^{-Jz}}\cdot \frac{1-e^{-Jz}}{1-|R(Jz)|} + |C{\eta}_{2}(z,J)|+|{\eta}_1^2(z,J)|.
\end{equation*}
By Lemma~\ref{lem:2}, we obtain ${\gamma}_e(z,J)\leqslant {C}/{J}+{C}/{J}+ {C}/{J^2} \leqslant {C}/{J}$. 

\medskip
 \textbf{Third inequality:} Recall that ${\gamma}_b $ is defined in \eqref{eqn:g_b}. The estimate in the first inequality implies that ${{\eta}_4 (z,J)}/{(1-|R(Jz)|)}$ is bounded. By the boundedness of ${\eta}_3$ and Lemma~\ref{lem:2}, we obtain
\begin{equation}\label{eqn:est_ga_b}
{\gamma}_b (z,J) \leqslant {C}/{J}({C}/{J}+C)^2 \leqslant {C}/{J}.
\end{equation}

\medskip
 \textbf{Fourth inequality:} Recall that ${\gamma}_c $ and ${\gamma}_d$ are defined in \eqref{eqn:g_c} and \eqref{eqn:g_d}. We first bound ${\gamma}_d(z,J)$ as in the previous steps:
\begin{equation*}
|{\gamma}_d(z,J)|\leqslant \frac{C|{\eta}_{2}(z,J)|}{1-\e^{-Jz}}\cdot\frac{1-\e^{-Jz}}{1-|R(Jz)|}+\frac{C}{J}\leqslant \frac{C}{J}. 
\end{equation*}
Then, we derive the bound for the product term ${\gamma}_d {\gamma}_c$. By the boundedness of ${\eta}_3$, we obtain
\begin{align*}
&{\gamma}_{d}(z,J) {\gamma}_{c}(z,J) \leqslant {\gamma}_{d}(z,J) \frac{C|{\eta}_2(z,J)||{\eta}_4(z,J)|}{(1-|R(Jz)|)^{3}} 
+ {\gamma}_{d}(z,J) \frac{C|{\eta}_2(z,J)||{\eta}_1(z,J)|}{(1-|R(Jz)|)^{2}} \\
&\quad + {\gamma}_{d}(z,J) \frac{C|{\eta}_4(z,J)||{\eta}_{1}(z,J)|^{2}}{(1-|R(Jz)|)^{2}} 
+ {\gamma}_{d}(z,J) \frac{C|{\eta}_1(z,J)|^{3}}{1-|R(Jz)|} =: \rm{I_1}+\rm{I_2} + \rm{I_3} +\rm{I_4}.
\end{align*}
By the boundedness of $|{\eta}_1|$ and $|{\eta}_4|$ from Lemma~\ref{lem:2} and Corollary~\ref{coro:2}, $\rm{I}_1$ can be bounded as 
\begin{equation*}
    {\rm{I_1}} \leqslant C\left(\frac{|{\eta}_2(z,J)|}{1-|R(Jz)|}\right)^{2} \cdot \left(\frac{|{\eta}_4(z,J)|}{1-|R(Jz)|}\right)^{2} + C\left(\frac{|{\eta}_2(z,J)|}{1-|R(Jz)|}\right)^{2} \cdot \left(\frac{|{\eta}_4(z,J)|}{1-|R(Jz)|}\right).
\end{equation*}
Furthermore, by Lemma~\ref{lem:2} and Corollary~\ref{coro:2}, we obtain ${\rm{I_1}} \leqslant {C}/{J^2}+{C}/{J^2} \leqslant {C}/{J^2}.$ By the estimate \eqref{eqn:est_ga_b} and Lemma~\ref{lem:2}, the term $\rm{I_2}$ can be bounded as 
\begin{equation*}
    {\rm{I_2}} \leqslant  \frac{C}{J} \frac{|{\eta}_2(z,J)|}{1-|R(Jz)|} \cdot \frac{|{\eta}_{1}(z,J)|}{1-|R(Jz)|} \leqslant \frac{C}{J^3}.
\end{equation*}
In analogy to $\rm{I}_1$ and $\rm{I}_2$, since $(1-|R(Jz)|)\leqslant 1$, the term ${\rm{I}_3}+\rm{I}_4$ can be bounded as 
\begin{align*}
    {\rm{I}_3+\rm{I}_4}&\leqslant {\gamma}_{d}(z,J)\frac{|{\eta}_4(z,J)|}{1-|R(Jz)|}\cdot \frac{|{\eta}_{1}(z,J)|^{2}}{(1-|R(Jz)|)^{2}} 
+ {\gamma}_{d}(z,J) \frac{C|{\eta}_1(z,J)|^{3}}{(1-|R(Jz)|)^3} \leqslant {C}/{J^3} + {C}/{J^4} \leqslant {C}/{J^3}.
\end{align*}
Finally, the product term ${\gamma}_d {\gamma}_c$ can be bounded by $C/J^2$. 
\end{proof}

Theorem~\ref{thm:lim} shows that $|{\gamma}_a + {\gamma}_e - \gamma(Jz)^2|\sim O(1/J)$, $|{\gamma}_{a} 
{\gamma}_{e} -{\gamma}_{b} |\sim O(1/J)$ and $|{\gamma}_{b} {\gamma}_{e} -
{\gamma}_{c} {\gamma}_{d} |\sim O(1/J^2)$ in \eqref{eqn:cubic_ode}. Then, we have the following connection between 
${\gamma}(z,J)$ and $\gamma^2(Jz)$, when $J$ is large.

\begin{theorem}\label{thm:conv1}
   Let $R$ and $\gamma$ of the plain parareal satisfy Assumption~\ref{assum:1}. For the  convergence factor 
   ${\gamma}^\dag (J)=\sup_{z\in \mathbb{R}^+}{\gamma}(z,J)$ with ${\gamma}$ defined in~\eqref{eqn:cubic_ode}, 
   there exists a constant $C$, independent of $J$ and $z$, such that
   \begin{equation*}
|{\gamma}^\dag (J) -\gl^2| \leqslant {C}/{J}.
   \end{equation*}
\end{theorem}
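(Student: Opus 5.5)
The plan is to treat the cubic \eqref{eqn:cubic_ode} as a small perturbation of $\xi^3-\gamma(Jz)^2\xi^2=0$ and to control its largest root uniformly in $z$. Write the cubic as
\[
p(\xi)=\xi^3-a_2\xi^2+a_1\xi+a_0,
\]
with $a_2=\gamma_a+\gamma_e$, $a_1=\gamma_a\gamma_e-\gamma_b$ and $a_0=\gamma_b\gamma_e-\gamma_c\gamma_d$. Theorem~\ref{thm:lim} gives
\[
|a_2-\gamma(Jz)^2|\le C/J,\qquad |a_1|\le C/J,\qquad |a_0|\le C/J^2,
\]
uniformly in $z\in\mathbb{R}^+$ and $J\geqslant 10$. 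The bound on $a_1$ uses that $\gamma_a$ is bounded: since $|\gamma(s)|\le\gl$ and $\gamma_a\le \gamma(Jz)^2+C/J$, we have $\gamma_a\le C$. The goal is then the pointwise estimate
\[
\bigl|\gamma(z,J)-\gamma(Jz)^2\bigr|\le C/J,
\]
with $C$ independent of $z$. Taking suprema gives the result. Indeed, $\sup_{z}\gamma(Jz)^2=\sup_{s>0}\gamma(s)^2=\gl^2$ for every $J$, because $s=Jz$ ranges over all of $\mathbb{R}^+$. Moreover, if two functions differ uniformly by at most $C/J$, their suprema differ by at most $C/J$. Assumption~\ref{assum:1}(ii) ensures that $\gl^2$ is attained at $s_0\ne 0$, so the supremum is a genuine maximum in the interior. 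This keeps the degenerate region $z\to 0$, where the $\gamma$'s have vanishing denominators, away from where the supremum is realized. Assumption~\ref{assum:1}(i) gives $0<R<1$ and hence $\gamma_a\ge 0$ is well defined.

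For the root estimate I will use a Rouché-type or direct bracketing argument. Set $g=\gamma(Jz)^2\in[0,\gl^2]$ and $\delta=C/J$. First I show that every root satisfies $|\xi|\le g+C'/J$. Suppose $|\xi|\ge g+M/J$ with $M$ large, and note $|\xi|\le C''$ since all coefficients are bounded. From
\[
\xi^2(\xi-a_2)=-a_1\xi-a_0
\]
we get $|\xi|^2(|\xi|-g-\delta)\le (C/J)|\xi|+C/J^2$. This forces either $|\xi|\lesssim J^{-1/2}$, a region where $|\xi|$ is at most $g+C/J$ only when we are careful, or $|\xi|-g-\delta\le C/(J|\xi|)$.

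The delicate case is $g$ small, of order $J^{-1/2}$ or less. There the crude bound only gives roots of size $O(J^{-1/2})$ (or $O(J^{-2/3})$ from $a_0$), which is not $O(1/J)$. This is the main obstacle. I will handle it by splitting $z$ into two regions:
\begin{itemize}
\item[(a)] $g\ge c_0$ for a fixed small $c_0$;
\item[(b)] $g< c_0$.
\end{itemize}
In region (a), the root near $g$ is simple and separated from the double root $0$. The implicit function theorem, or equivalently a bound on $|p(g)|$ divided by $|p'|$ near $g$, gives a root within $C/J$ of $g$. The other two roots are $O(J^{-1/2})$, hence below $g$ once $J$ is large; for $10\le J\le J_0$ the bound holds trivially by adjusting $C$. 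In region (b), I only need an upper bound, because $\gl^2$ is attained in region (a) near $s_0$. So it suffices that $\gamma(z,J)\le c_0+o(1)<\gl^2$ there, which the crude bound $|\xi|\le g+CJ^{-1/2}$ provides for $J$ large, provided $c_0<\gl^2$.

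Combining the regions, the supremum over $z$ is the maximum over region (a), up to $C/J$, and that equals $\gl^2+O(1/J)$. The lower bound $\gamma^\dag(J)\ge\gl^2-C/J$ follows by choosing $z=s_0/J$, which lies in region (a). This yields $|\gamma^\dag(J)-\gl^2|\le C/J$. If $\gl=0$ (the exact coarse propagator), the argument degenerates, and I would then only claim the bound via $\gamma^\dag(J)\le CJ^{-1/2}$ or $CJ^{-2/3}$; since Assumption~\ref{assum:1}(ii) requires a nonzero maximizer of $|\gamma|$, that case is excluded.
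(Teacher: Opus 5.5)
Your argument is correct and follows the paper's overall structure. It uses the coefficient bounds of Theorem~\ref{thm:lim}, treats \eqref{eqn:cubic_ode} as a perturbation of $\xi^3-\gamma(Jz)^2\xi^2$, and gets the lower bound $\gamma^\dag(J)\geqslant\gl^2-C/J$ from a Rouch\'e-type argument at $z=s_0/J$.

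The difference is in the upper bound. You split the $z$-axis by the size of $g=\gamma(Jz)^2$. Where $g\geqslant c_0$ you isolate the dominant simple root. Where $g<c_0$ you fall back on the crude bound $|\xi|\leqslant g+CJ^{-1/2}$ plus a large-$J$ threshold.

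The paper splits the \emph{roots} instead. Every root satisfies $\xi_\ell^2(\xi_\ell-g)=(a_2-g)\xi_\ell^2-a_1\xi_\ell-a_0$, and the roots are uniformly bounded, so $|\xi_\ell^2(\xi_\ell-g)|\leqslant C/J$. A root with $|\xi_\ell|\geqslant\gl^2/2$ therefore lies within $4C/(\gl^4J)$ of $g\leqslant\gl^2$. A root with $|\xi_\ell|<\gl^2/2$ is trivially below $\gl^2$. This gives $\gamma(z,J)\leqslant\gl^2+C/J$ for all $z$ at once. It needs no root isolation, no $J^{-1/2}$ analysis, and no special treatment of small $g$.

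In exchange, your route gives more structure. On $\{g\geqslant c_0\}$ you get a two-sided estimate $|\gamma(z,J)-g|\leqslant C/J$ and control of the other two roots. Those two roots are in fact $O(1/J)$ there, by Vieta once the dominant root is located. Your $O(J^{-1/2})$ claim for them still needs a line of justification, for instance Rouch\'e on a small circle about $0$.

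Finally, the uniform pointwise estimate you announce as ``the goal'' is never proved. As you observe yourself, it may fail where $g$ is small. Your final argument correctly does not use it, so that framing should be dropped.
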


\begin{proof}
Denote $c_0(z,J)={\gamma}_{b} {\gamma}_{e} -{\gamma}_{c} {\gamma}_{d}$, $ c_1(z,J)={\gamma}_{a} 
{\gamma}_{e} -{\gamma}_{b}$, and $c_2 (z,J)=(\gamma(Jz))^2-\left( {\gamma}_{a} +{\gamma}_{e} \right)$. 
Then, \eqref{eqn:cubic_ode} becomes
\begin{equation}\label{eqn:cubic_3}
\xi_{\ell}^{3} +(c_2(z,J)-(\gamma (Jz))^2) \xi_{\ell}^{2} +c_1(z,J) \xi_{\ell} +c_0(z,J) =0. 
\end{equation}
Theorem~\ref{thm:lim} indicates that $|c_2(z,J)|\leqslant C/J$, $|c_1(z,J)|\leqslant C/J$, and $|c_0 (z,J)|\leqslant C/J^2$, where the constant $C$ is 
independent of $J$ and $z$. Recall that $\gl$ is the supremum defined in \eqref{eqn:gamma} and $|\gamma (s_0)| =\gl$. Consider $Jz_j=s_0$, 
i.e., $z_j=s_0/J$, and \eqref{eqn:cubic_3} becomes $\xi_{\ell}^{3} +(c_2(z_j,J)-(\gamma (s_0))^2) \xi_{\ell}^{2} + c_1(z_j,J) \xi_{\ell} +c_0(z_j,J) =0.$ Consider the disk $|\xi - \gl^2| \leqslant D/J$ and apply Rouch\'e's theorem on the circle $|\xi -\gl^2|=D/J$. Let 
$q(\xi)=\xi^3 - \gl^2 \xi^2$ and $h(\xi)=c_2 \xi^2 + c_1 \xi + c_0$. We can choose $D$ and $J_0$ sufficiently large such that $|h(\xi)| \leqslant |q(\xi)|$ and $D/J \leqslant \gl^2/2$ for $J\geqslant J_0$, since 
\begin{equation*}
|h\left( \xi \right) |\leqslant \frac{C}{J} \left( \gl^2+\frac{D}{J} \right)^{2} +\frac{C}{J} \left( \gl^2+\frac{D}{J} \right) +\frac{C}{J^{2}} 
\leqslant \frac{C}{J}  \leqslant \frac{D}{J} \left( \gl^2-\frac{D}{J} \right)^{2} \leqslant |q\left( \xi \right) |.
\end{equation*}
By Rouch\'e's theorem, there is exactly one root in $|\xi - \gl^2| \leqslant D / J$ when $J\geqslant J_0$. Through a similar argument, we deduce that the other two roots approach $0$ as $J \to \infty$. Thus, we obtain 
\begin{equation}\label{eqn:ga_zj}
|{\gamma} (z_j,J) -(\gamma(s_0))^2| \leqslant {C}/{J}.
\end{equation}
Now consider the general case. By applying Rouch\'e's theorem to $q$ and $h$ again on $|\xi|=D_1$ for some large $R$, we deduce that all roots are bounded in $|\xi| \leqslant D_1$. Then, \eqref{eqn:cubic_3} implies $|(\xi_{\ell} -\left( \gamma \left( Jz \right) \right)^{2}) \xi_{\ell}^{2} |\leqslant |h\left( \xi_{\ell} \right) |\leqslant {C}/{J} .$ Taking the supremum over $z \in \mathbb{R}^+$, the inequality holds as well: $\sup_{z\in \mathbb{R}^+}|(\xi_{\ell} -\left( \gamma \left( Jz \right) \right)^{2}) \xi_{\ell}^{2} |\leqslant {C}/{J}.$ We are interested in $\xi_{\ell}$ away from $0$, i.e., $\xi_{\ell}\geqslant c>0$ for a positive constant $c=\gl^2/2$. Such $\xi_{\ell}$ exists for sufficiently large $J$ since $|\gamma(s_0)|=\gl>0$. Then, we have 
\begin{equation*}
\max_{{\ell \in \{1,2,3\},|\xi_\ell|\geqslant c}}\sup_{z\in \mathbb{R}^{+}} |\xi_{\ell} |\leqslant {C}/{c^2J} +\sup_{z\in \mathbb{R}^{+}} |\gamma \left( Jz \right) |^{2} \leqslant {C}/{c^2J} + \gl^2. 
\end{equation*}
Combined with the lower bound provided by \eqref{eqn:ga_zj}, we obtain the desired result. 
\end{proof}

\begin{remark}
The results presented in Sections~\ref{sec:pre est} and~\ref{sec:error estimate} are for CPs (II) and can be readily extended to CPs (I) without any modifications to the statements.
\end{remark}

Fig.~\ref{fig:inv_fig} illustrates the result of Theorem~\ref{thm:conv1}. Recall that ${\gamma}^\dag (J) = \sup_{z\in
\mathbb{R}^+}|{\gamma}(z,J)|$. It can be verified that the stability functions of the BE, LIIIC2, and LIIIC2(2) schemes satisfy 
Assumption~\ref{assum:1}. Fig.~\ref{fig:inv_fig} demonstrates that, for both methods, the difference $|\gamma^\dag (J)-\gl^2|$ decays as 
${O}({J}^{-1})$. Moreover, the discrepancy for CPs (II) is smaller than that for CPs (I), indicating that Type (II) performs notably 
better than Type (I) when $\gl$ is small. Note that the difference between the two types for LIIIC2 is particularly pronounced around $J \in 
(10,80)$. This arises because, for Type (I), $\gl$ is small, and the leading error comes from the other coefficients in \eqref{eqn:cubic_3}. {We also observe sharp decreases in both LIIIC2 and LIIIC2(2) of Type (I), occurring around $J=80$ and $J=550$, respectively. These decreases arise because the dominant error source shifts from other terms in Theorem~\ref{thm:lim} to $|\gamma_a - \gamma(Jz)|^2$.
}

\begin{figure}[htbp!]
    \centering
    \includegraphics[width=0.98\textwidth,trim={0.7cm 1cm 1.5cm 1cm},clip]{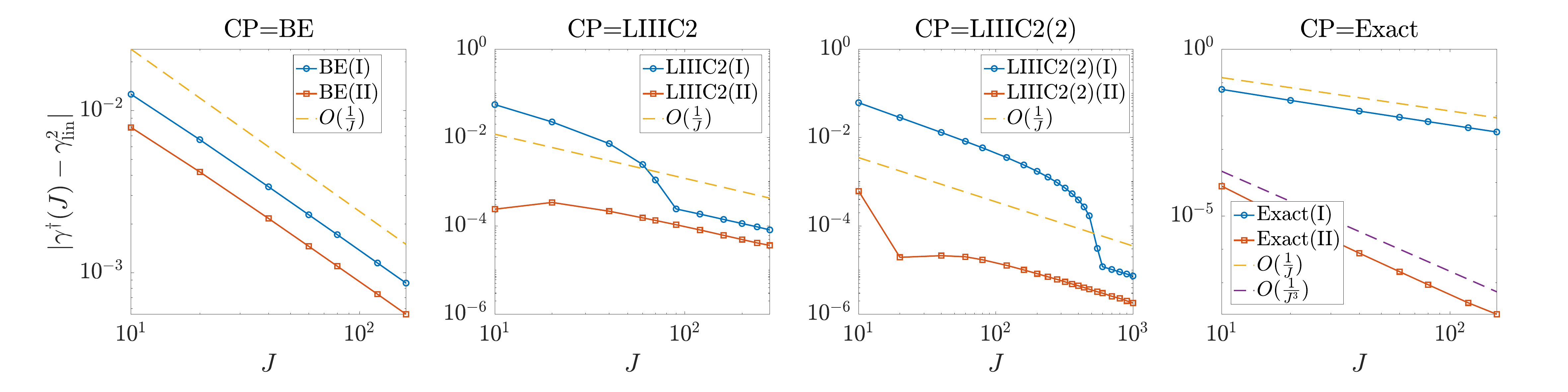}
    \caption{The function $|\gamma^\dag (J)-\gl^2|$ versus the coarsening factor $J$ for four CPs of two types. }
    \label{fig:inv_fig}
\end{figure}

\section{Further discussion and numerical experiments}\label{sec:numerical}
In this work, we propose a new update scheme \eqref{eqn:CP-II} for the F-multistep parareal and compare it with the original scheme 
\eqref{eqn:CP-I} in Section \ref{sec:conv}. Other possible structures exist that incorporate the multistep FPs and 
satisfy the consistency condition of the block iteration \cite{gander2023unified}. We also examined the following structure: 
\begin{align*}
\left\{
\begin{aligned}U_{n+1}^{k+1}&=G\left( U_{n}^{k+1} \right) +F_{1}^{\left( J \right)}\left( U_{n,-1}^{k} \right) +F_{2}^{\left( J \right)}\left(
U_{n}^{k} \right) -G\left( U_{n}^{k} \right)\\ U_{n+1,-1}^{k+1}&=G\left( U_{n,-1}^{k+1} \right) +F_{1}^{\left( J-1 \right)}\left( U_{n,-1}^{k}
\right) +F_{2}^{\left( J-1 \right)}\left( U_{n}^{k} \right) -G\left( U_{n,-1}^{k} \right).\end{aligned}
\right.
\end{align*}
In this update scheme, we use the term $G(U_{n,-i}^{k+1})-G(U_{n,-i}^k)$ to correct $U_{n+1,-i}^{k+1}$ for $i=0,1$. The finite 
convergence property is straightforward to verify. However, numerical experiments indicate that the parareal error grows with the iteration
$k$. The original Type (I) update scheme is convergent but introduces a consistency error, thus impeding a fast convergence rate. In contrast,
our newly proposed Type (II) scheme allows the computations of $U_{n+1,-i}^{k+1}$ for $i=1,\dotsc,q$ to remain parallel and achieves fast 
convergence when CPs are of high resolution. In the following numerical results, we illustrate the comparison between Type (I) and Type (II)
schemes on a linear model and a semilinear model.  

There is an alternative approach to employing the BDF$q$ method as the FP in the parareal algorithm. Within each coarse interval $[\Tau_n,\Tau_{n+1}]$, a $q$-th order L-stable Runge-Kutta scheme is utilized to initialize $q-1$ intermediate values, after which BDF$q$ is applied to compute the solution at $\Tau_{n+1}$. In this formulation, the FP serves as a single-step mixed scheme that propagates the solution from $\Tau_n$ to $\Tau_{n+1}$. This approach substantially simplifies the error analysis, as the mixed FP can be effectively approximated by the exact solver. However, the mixed FP is computationally much more expensive than the pure BDF$q$ scheme, particularly when the coarsening factor $J$ is small or the order $q$ is large. The convergence of this scheme is numerically assessed on the linear model illustrated in Fig.~\ref{fig:Ex1_nonsmooth}.

\subsection{Test on linear model}
We now numerically illustrate the two types of updates in the F-multistep parareal method, alongside the theoretical estimate established in 
Theorem \ref{thm:main}.
Consider the following initial and boundary value problem:\begin{equation}\label{eqn:diffusion}
\left\{	\begin{alignedat}{3}
&\partial_t u(x,t) - \partial_{xx} u(x,t) = f(x,t),\quad && x \in \Omega,\quad  &&0 < t < T, \\
&u(x,t) = 0, && x \in \partial\Omega,\quad && 0 < t < T, \\
&u(x,0) = u_0(x), && x \in \Omega, &&
\end{alignedat}
\right.
\end{equation}
with $\Omega=(0,1)$, and the following two sets of problem data: {\rm(a)}  $T=2$, $u_0 = \chi_{(0,1/2)},$ and $f\equiv 0$, where 
$\chi_{(0,1/2)}$ denotes the characteristic function of $(0,1/2)$; {\rm(b)} $T=2$, $u_0(x) = 2\sin(\pi x)$
and $f(x,t) =\sin ( \pi x ) \left( \pi^{2} \cos  t  -\sin  t  +\pi^{2} \cos ( 4t ) -4\sin ( 4t \right))$.

In the experiment, we divide the domain $\Omega$ into $1000$ equal subintervals, each of length $h=1/1000$,
apply the Galerkin FEM with piecewise linear FEM, and initialize $U_{n}^{0}$ with the CP. We fix the fine time step size $\tau  = 1/1000$ and 
consider different coarsening factors $J$. 
Throughout, we study the error between the iterative solution $U^k_n$ by the F-multistep parareal and the fine time-stepping solution $u_{nJ}$,
i.e.,
$\text{error}=\max_{1\leqslant n\leqslant N_{c}}\|U_{n}^{k}-u_{nJ}\|_{L^{2}\II} $. 

In Case (a), the problem data is nonsmooth and compatible with the zero Dirichlet boundary condition. Fig.~\ref{fig:Ex1_nonsmooth} illustrates the convergence rates of the F-multistep parareal employing three CPs, namely, BE, LIIIC2, and LIIIC2(2). The exact solver is omitted due to its computational expense in practical applications. Two multistep FPs are considered: the BDF2 and BDF4 schemes, with $J=10$ and $40$. The slopes of 
the reference lines for BE, LIIIC2 and LIIIC2(2) are derived by taking the square root of ${\gamma}^\dag$ from Table~\ref{tab:comparison}. These numerical results indicate that the error estimate in Theorem~\ref{thm:err_recursion} is sharp. When
BE serves as the CP, the error behaviors for both Type (I) and Type (II) updates are nearly identical, consistent with the similar behavior of
the convergence function ${\gamma}$ in Fig.~\ref{fig:comparison_g}. In contrast, when LIIIC2 or LIIIC2(2) is chosen as the CP, the F-multistep parareal with Type (II) update converges significantly faster than with Type (I), indicating that the Type (II) update is more effective. Furthermore, we evaluate the mixed FP discussed in the beginning of this Section. Specifically, the two-stage Lobatto IIIC is employed to initialize the BDF2 scheme, while the three-stage Lobatto IIIC is used to initialize the BDF4 scheme. The convergence behavior of the parareal with Type (II) update closely resembles that of the parareal incorporating the mixed FP. However, the computational cost of high-order L-stable Runge-Kutta methods substantially exceeds that of the BDF$q$ scheme.

In Case (b), the problem data is smooth, with the exact solution  $u(x,t)= \sin(\pi x) (\cos t + \cos(4t))$. Fig.~\ref{fig:Ex1_smooth} illustrates the convergence rate of the F-multistep parareal with four CPs. Here, the optimized coarse propagator (OCP), introduced in \cite{jin2025optimizing}, has the stability function
\begin{equation*}
    R(s)=\frac{1-0.17922s}{1+0.82078s+0.42444s^2}.
\end{equation*}
Note that the OCP is not always positive over $\mathbb{R}^+$; accordingly, two reference lines with empirical slopes of 0.22 and 0.10 are included for comparison. The OCP is specifically designed to minimize the convergence function within the standard parareal framework, providing a uniform approximation to the exact solver. Consequently, Type (II) updates with OCP outperform those with Type (I). Notably, the convergence is faster than in the nonsmooth scenario, owing to the 
higher regularity of the data \cite{dai2013stable}.  It is obvious that the convergence 
rate is stable if we consider the alternate iterations. This observation naturally motivates us to consider the relationship between $(k+1)$-th 
and $(k-1)$-th iterations in \eqref{eqn:k-1}, rather than focusing solely on the adjacent iterations.

\begin{figure*}[htbp!]
  \centering
  \includegraphics[width=0.9\textwidth,trim={0.1cm 0.5cm 1.3cm 1cm},clip]{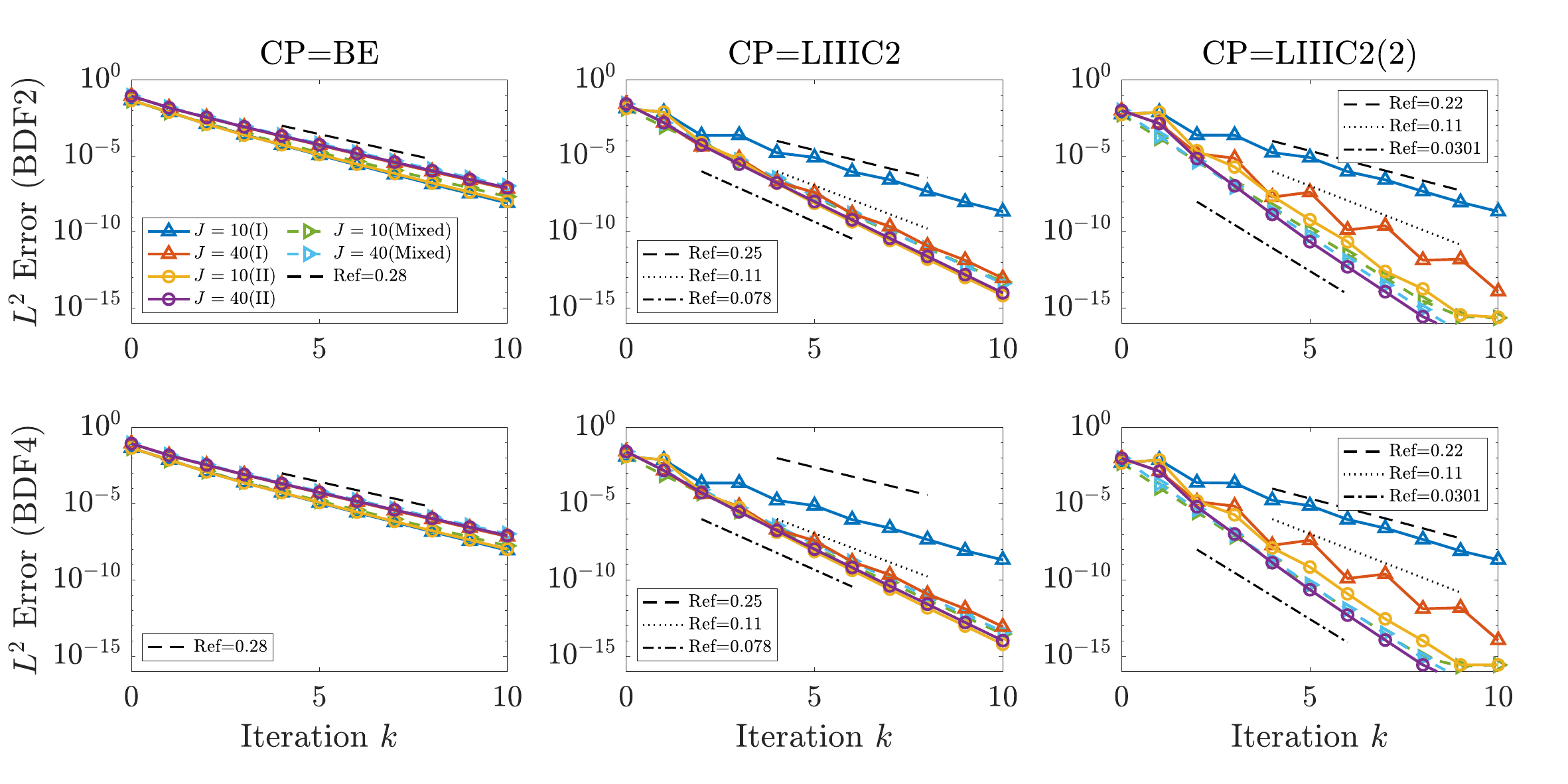}
  \caption{Case (a): the $L^2$ error of the F-multistep parareal using three CPs, BE, LIIIC2 and LIIIC2(2), for two FPs, the BDF2 and BDF4 schemes, with $J=10$ and $40$. The parareal with mixed FPs is also tested under the same configurations. }
  \label{fig:Ex1_nonsmooth}
\end{figure*}

\begin{figure*}[htbp!]
  \centering
  \includegraphics[width=0.98\textwidth,trim={0.1cm 0.5cm 1.3cm 1cm},clip]{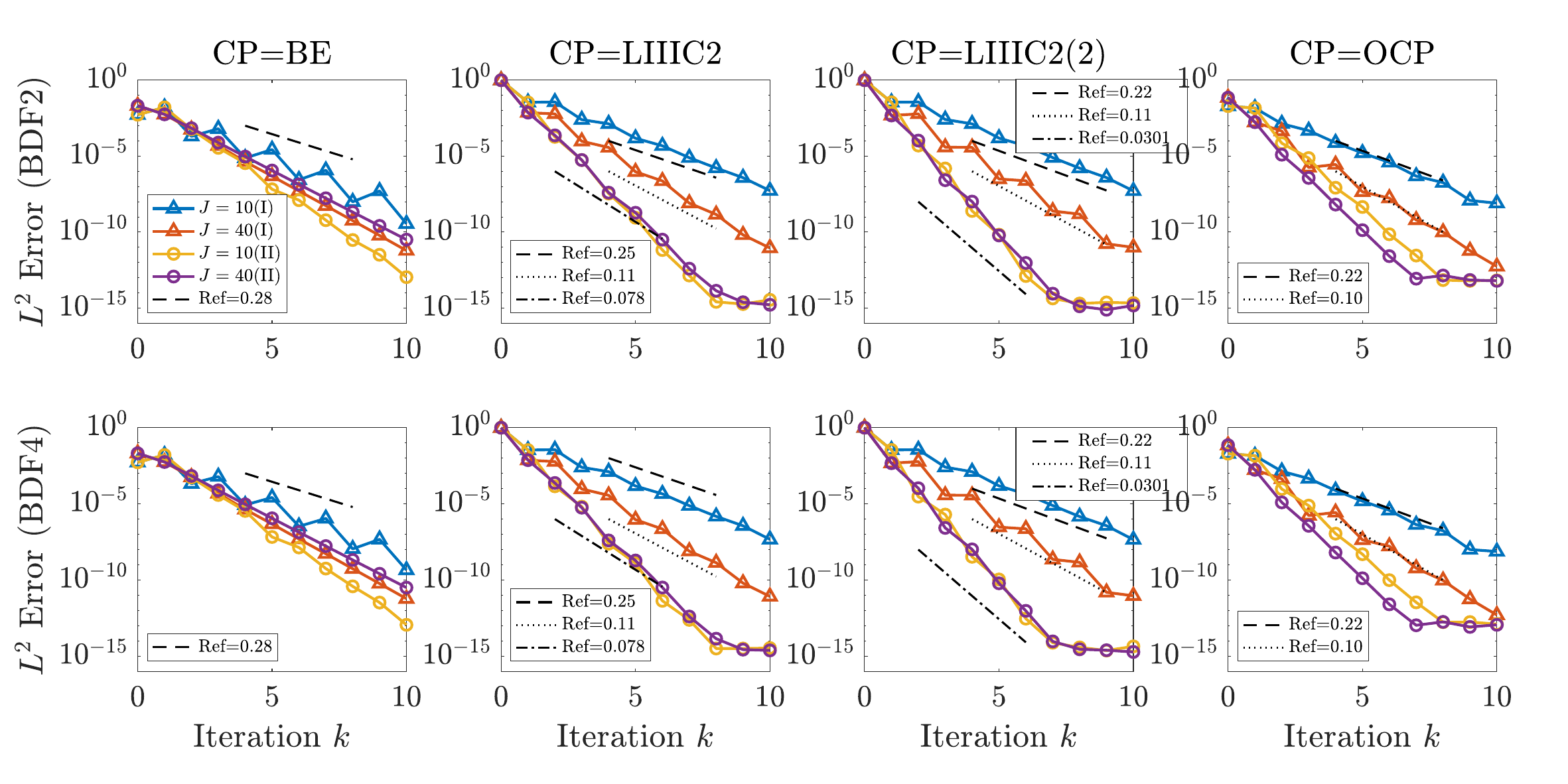}
  \caption{Case (b): the $L^2$ error of the F-multistep parareal using four CPs, BE, LIIIC2, LIIIC2(2) and OCP, for two FPs, the BDF2 and BDF4 schemes, with $J=10$ and $40$.}
  \label{fig:Ex1_smooth}
\end{figure*}

\subsection{Test on semilinear model}
Next, we test the performance of the proposed algorithm for the following semilinear parabolic equation:
\begin{equation}\label{eqn: fisher-kpp}
       \partial_{t} u(x,t)=\partial_{xx} u(x,t)+c_L u(x,t)\left( 1-u(x,t) \right) + g(x,t),\  \left( x,t \right) \in \left[ -1 ,1 \right] \times \left[ 0,T \right] ,
    \end{equation}
subject to homogeneous Dirichlet boundary conditions,  
with the solution $u(x, t) = \cos ({\pi x}/{2} ) \cos t$ and the force function $g(x,t)=\partial_t u - \partial_{xx}u-c_L u(1-u)$. We consider the final time $T=2$.

In this problem, we still apply the Galerkin FEM with piecewise linear functions with a mesh size $h = 1/1000$, and initialize $U_0^n$ with 
the initial value $U_0^0$. We still fix the fine time step size $\tau  = 1/1000$ and study the behavior of F-multistep parareal with $J \in \{10,40\}$ and  $c_L 
\in \{1,4 \}$. As demonstrated in \cite{linear_conv}, the convergence factor of the parareal algorithm, when applied to semilinear parabolic equations, approaches that of the linear case as the coarse time step size $\Tau$ becomes sufficiently small. Consequently, based on the test on the linear model, we can anticipate satisfactory performance from the F-multistep parareal with CPs (II).

In Fig.~\ref{fig:Ex2_nonlinear}, we set $c_L = 1$ in~\eqref{eqn: fisher-kpp} and consider four CPs: BE, LIIIC2, LIIIC2(2), and OCP. 
In all cases, independent of $J$ and of the chosen CP, Type~(II) F-multistep parareal consistently outperforms Type~(I). 
Moreover, the behavior of the two types is qualitatively similar when using BDF2 or BDF4 as FPs. 
When BE is used as the CP, the performance gap between Type~(I) and Type~(II) is small, in agreement with the linear experiments. 
As indicated by the error estimate in Theorem~\ref{thm:main}, there are two main contributions to the error: (i) the mismatch between the FP and CP, and 
(ii) the structural error intrinsic to the F-multistep parareal iteration.  
With BE as the CP, the FP--CP mismatch dominates the structural error, so the difference between the two types remains negligible. 
In contrast, when a more accurate CP such as LIIIC2(2) or OCP is employed, the structural error becomes the main obstacle to fast convergence for Type~(I), whereas the update in Type~(II) mitigates this effect and yields a clear improvement.

In Fig.~\ref{fig:Ex2_nonlinear_2}, we set $c_L = 4$ in \eqref{eqn: fisher-kpp} . As the nonlinearity $c_L$ increases, the discrepancy between 
FPs and CPs grows, leading to reduced convergence rates across all cases. Therefore, the difference between Type (I) and Type (II) for BE and OCP 
is small. However, the performance of the F-multistep parareal can be enhanced when OCP is the CP by incorporating additional terms to better 
approximate the nonlinear term. In the case when LIIIC2 or LIIIC2(2) serves as the CP, the CP still resolves the solution profile well, so 
that it can reflect the benefit of Type (II) update. The same as in the smooth linear case, the F-multistep parareal with Type (II) update converges
faster than the estimated rate owing to the high regularity. 
	
\begin{figure*}[htbp!]
  \centering
  \includegraphics[width=0.98\textwidth,trim={0.1cm 0.5cm 1.3cm 1cm},clip]{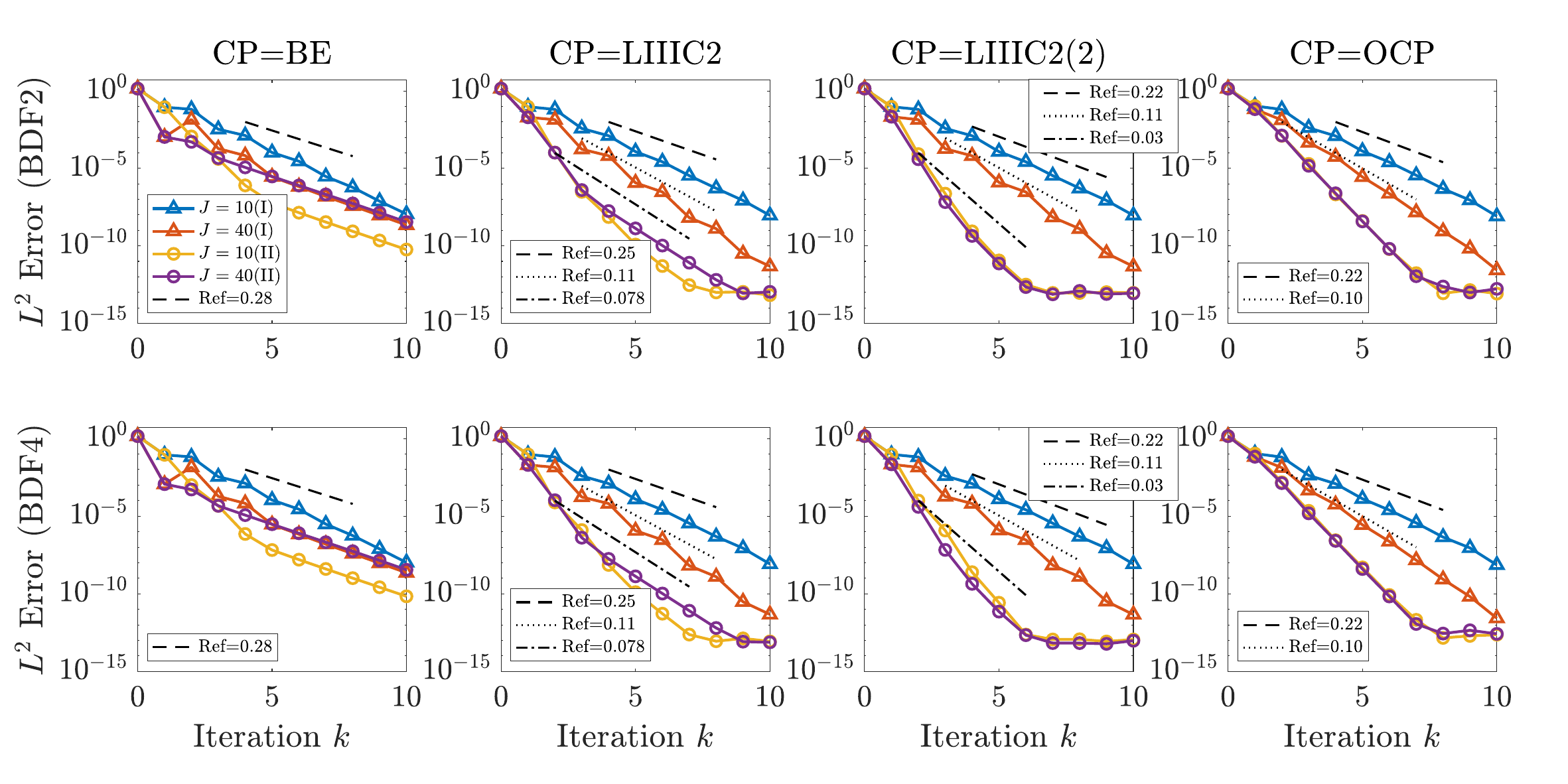}
  \caption{($c_L = 1$): the $L^2$ error of the F-multistep parareal using four CPs, BE, LIIIC2, LIIIC2(2) and OCP, for two FPs, the BDF2 and BDF4 schemes, with $J=10$ and $40$.}
  \label{fig:Ex2_nonlinear}
\end{figure*}

\begin{figure*}[htbp!]
  \centering
  \includegraphics[width=0.98\textwidth,trim={0.1cm 0.5cm 1.3cm 1cm},clip]{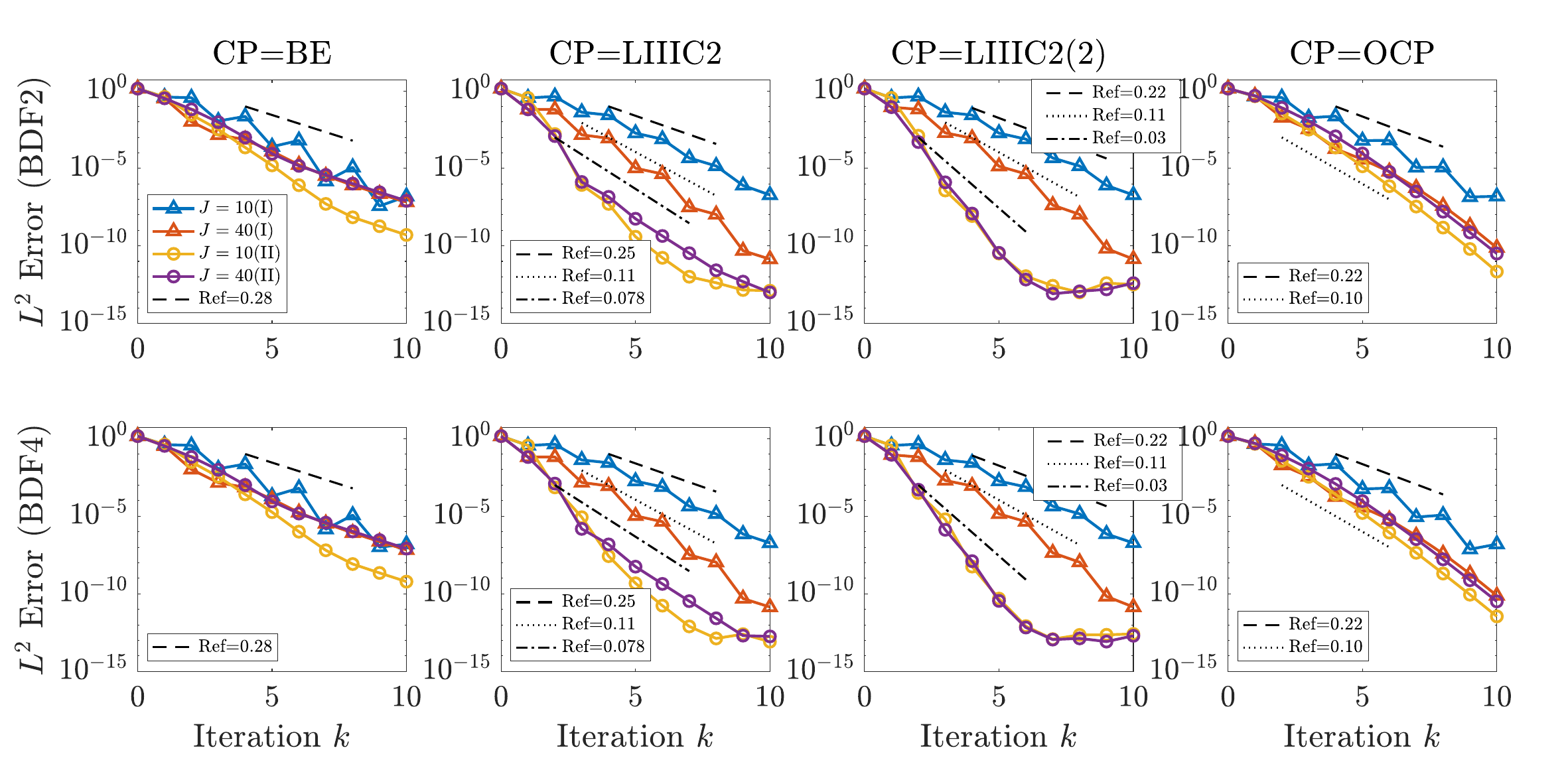}
  \caption{($c_L = 4$): the $L^2$ error of the F-multistep parareal using four CPs, BE, LIIIC2, LIIIC2(2) and OCP, for two FPs, the BDF2 and BDF4 schemes, with $J=10$ and $40$.}
  \label{fig:Ex2_nonlinear_2}
\end{figure*}

\section{Conclusion}
In this work, we propose a novel update scheme for the new variables in the multistep framework. The F-multistep parareal incorporating this scheme achieves faster convergence than the existing one introduced in \cite{ait2024multi}, while maintaining similar computational cost. We provide a detailed linear convergence analysis for the case when BDF2 serves as the FP for linear parabolic equations, and demonstrate the advantages of our update scheme both theoretically and numerically. Furthermore, we show that as the coarsening factor $J$ tends to infinity, the convergence factor of F-multistep parareal approaches that of the plain parareal. The proposed update also performs well for other high-order multistep methods, as illustrated by numerical experiments with BDF4 as the FP. 
A rigorous convergence analysis for higher-order BDF schemes appears substantially more involved and is left for future work. 
Finally, the linear analysis developed here serves as a key tool for studying nonlinear problems \cite{linear_conv}, for which a more refined treatment of the effect of linearization in parareal is required; we also leave this as future work.

\bibliographystyle{siam}
\bibliography{references}

\end{document}


\maketitle

\section{A detailed example}

Here we include some equations and theorem-like environments to show
how these are labeled in a supplement and can be referenced from the
main text.
Consider the following equation:
\begin{equation}
  \label{eq:suppa}
  a^2 + b^2 = c^2.
\end{equation}
You can also reference equations such as \cref{eq:matrices,eq:bb} 
from the main article in this supplement.

\lipsum[100-101]

\begin{theorem}
  An example theorem.
\end{theorem}

\lipsum[102]
 
\begin{lemma}
  An example lemma.
\end{lemma}

\lipsum[103-105]

Here is an example citation: \cite{KoMa14}.

\section[Proof of Thm]{Proof of \cref{thm:bigthm}}
\label{sec:proof}
\lipsum[106-112]

\section{Additional experimental results}
\Cref{tab:foo} shows additional
supporting evidence. 

\begin{table}[htbp]
{\footnotesize
  \caption{Example table}  \label{tab:foo}
\begin{center}
  \begin{tabular}{|c|c|c|} \hline
   Species & \bf Mean & \bf Std.~Dev. \\ \hline
    1 & 3.4 & 1.2 \\
    2 & 5.4 & 0.6 \\ \hline
  \end{tabular}
\end{center}
}
\end{table}

\newpage
\subsubsection{The case $k=0$}
We begin by eliminating the index $k+1$ on the right-hand side of the second equation in \eqref{eqn:coupled} through iterative substitution with respect to $n$:
\begin{align}\label{eqn:k=0}
E_{n+1}^{k+1}&=G_{1}^{2}E_{n-1}^{k+1}+\left( \bar{F}_{1} D_{n}^{k}+\gamma_{0} E_{n}^{k} \right) +G_{1}\left( \bar{F}_{1} D_{n-1}^{k}+\gamma_{0} E_{n-1}^{k} \right) \nonumber \\
&= G_{1}^{n+1} E_{0}^{k+1} + \sum_{i=0}^{n} G_{1}^{i} \left( \bar{F}_1 D_{n-i}^{k} + \gamma_{0} E_{n-i}^{k} \right),
\end{align}
where $E_{0}^{k+1}=U_0^{k+1} - U_0=0$. Setting $k=0$, we obtain:
\begin{align*}
E_{n+1}^{1} &= \sum_{i=0}^{n} G_{1}^{i} \left( F_{1}^{(J)} \left( E_{n-i,-1}^{0} - E_{n-i}^{0} \right) + \gamma_{0} E_{n-i}^{0} \right) \\
&= \sum_{i=0}^{n} G_{1}^{i} F_{1}^{(J)} E_{n-i,-1}^{0} + \sum_{i=0}^{n} G_{1}^{i} \left( \gamma_{0} - F_{1}^{(J)} \right) E_{n-i}^{0}.
\end{align*}
By taking the $L^2$ norm on both sides, we derive the error bound for the case $k=0$,
\begin{equation*}
\| E_{n+1}^{1}\|_{L^{2}} \leq \| F_{1}^{\left( J \right)}\|_{H} \sum_{i=0}^{n} \| E_{i,-1}^{0}\|_{L^{2}} +\| \gamma_{0} -F_{1}^{\left( J \right)}\|_{H} \sum_{i=0}^{n} \| E_{i}^{0}\|_{L^{2}},
\end{equation*}
since $\| G_1 \|_{H} \leq 1$.

\subsubsection{The case $k=1$}
We continue with \eqref{eqn:k=0} in the case $k=0$. Then using the first equation in \eqref{eqn:coupled} to eliminate $D_i^k$ on the right-hand side,
\begin{align*}
E_{n+1}^{k+1} &= \gamma_{0} \sum_{i=0}^{n} G_{1}^{i}E_{n-i}^{k} + \bar{F_{1}} \sum_{i=0}^{n} G_{1}^{i}D_{n-i}^{k} \\
&= \gamma_{0} \sum_{i=0}^{n} G_{1}^{i}E_{n-i}^{k} + \bar{F_{1}} \sum_{i=0}^{n-1} G_{1}^{i}\left( hE_{n-i-1}^{k-1} + \gamma_{1} D_{n-i-1}^{k-1} \right),
\end{align*}
where $D_0^k = E_{0,-1}^k - E_0^k=0$. 
Our next target is to further expand $E_i^k$ on the right-hand side. We have the expression for $E_{n-i}^k$,
\begin{align*}
E_{n-i}^{k} &= G_{1} E_{n-1-i}^{k} + \bar{F_{1}} D_{n-1-i}^{k-1} + \gamma_{0} E_{n-1-i}^{k-1} \\
&= G_{1}^{n-i} E_{0}^{k} + \sum_{j=0}^{n-1-i} G_{1}^{j} \left( \gamma_{0} E_{n-1-i-j}^{k-1} + \bar{F_{1}} D_{n-1-i-j}^{k-1} \right),
\end{align*}
with $E_0^k = U_0^k - U_0=0$. 
Then we continue our expression for the Parareal error $E_{n+1}^{k+1}$:
\begin{align}
E_{n+1}^{k+1} &= \gamma_{0} \sum_{i=0}^{n} G_{1}^{i} \sum_{j=0}^{n-1-i} G_{1}^{j} \left( \gamma_{0} E_{n-1-i-j}^{k-1} + \bar{F_{1}} D_{n-1-i-j}^{k-1} \right) \nonumber \\
&\quad + \bar{F_{1}} h \sum_{i=0}^{n-1} G_{1}^{i} E_{n-1-i}^{k-1} + \bar{F_{1}} \gamma_{1} \sum_{i=0}^{n-1} G_{1}^{i} D_{n-1-i}^{k-1} \nonumber \\
&= \gamma_{0}^{2} \sum_{i=0}^{n} \sum_{j=0}^{n-1-i} G_{1}^{i+j} E_{n-1-i-j}^{k-1} + \gamma_{0} \bar{F_{1}} \sum_{i=0}^{n} \sum_{j=0}^{n-1-i} G_{1}^{i+j} D_{n-1-i-j}^{k-1} \nonumber \\
&\quad + \bar{F_{1}} h \sum_{i=0}^{n-1} G_{1}^{i} E_{n-1-i}^{k-1} + \bar{F_{1}} \gamma_{1} \sum_{i=0}^{n-1} G_{1}^{i} D_{n-1-i}^{k-1} \nonumber \\
&= \sum_{m=0}^{n-1} (m+1) G_{1}^{m} \gamma_{0}^{2} E_{n-1-m}^{k-1} + \gamma_{0} \bar{F_{1}} \sum_{m=0}^{n-1} (m+1) G_{1}^{m} D_{n-1-m}^{k-1} \nonumber\\
&\quad + \bar{F_{1}} h \sum_{i=0}^{n-1} G_{1}^{i} E_{n-1-i}^{k-1} + \bar{F_{1}} \gamma_{1} \sum_{i=0}^{n-1} G_{1}^{i} D_{n-1-i}^{k-1} \nonumber\\
&= \sum_{m=0}^{n-1} G_{1}^{m} \left( (m+1) \gamma_{0}^{2} + \bar{F_{1}} h \right) E_{n-1-m}^{k-1}\nonumber \\
&\quad + \sum_{m=0}^{n-1} G_{1}^{m} \bar{F_{1}} \left( (m+1) \gamma_{0} + \gamma_{1} \right) D_{n-1-m}^{k-1}.\label{eqn:k=1}
\end{align}
Note that the right-hand side of the equality above is only related to the iteration $k-1$. Setting $k=1$ into the equality above, we then obtain
\begin{align*}
E_{n+1}^{2} &= \sum_{m=0}^{n-1} G_{1}^{m} \left( (m+1) \gamma_{0}^{2} + \bar{F}_{1} h \right) E_{n-1-m}^{0} \\
&\quad + \sum_{m=0}^{n-1} G_{1}^{m} \bar{F}_{1} \left( (m+1) \gamma_{0} + \gamma_{1} \right) \left( G_{1} G_{2}^{-1} E_{n-1-m,-1}^{0} - E_{n-1-m}^{0} \right) \\
&= \sum_{m=0}^{n-1} G_{1}^{m} \left( (m+1) \gamma_{0} \left( \gamma_{0} - \bar{F}_{1} \right) + \bar{F}_{1} \left( h - \gamma_{1} \right) \right) E_{n-1-m}^{0} \\
&\quad + \sum_{m=0}^{n-1} G_{1}^{m} \bar{F}_{1} \left( (m+1) \gamma_{0} + \gamma_{1} \right) G_{1} G_{2}^{-1} E_{n-1-m,-1}^{0}.
\end{align*}
After taking the $L^2$ norm on both sides, we then obtain
\begin{align*}
\| E_{n+1}^{2}\|_{L^{2}} &\leq \| \sum_{m=0}^{n-1} G_{1}^{m}\left( \left( m+1 \right) \gamma_{0} \left( \gamma_{0} -\bar{F}_{1} \right) +\bar{F}_{1} \left( h-\gamma_{1} \right) \right) E_{n-1-m}^{0}\|_{L^{2}} \\
&\quad +\| \sum_{m=0}^{n-1} G_{1}^{m}\bar{F}_{1} \left( (m+1)\gamma_{0} +\gamma_{1} \right) G_{1}G_{2}^{-1}E_{n-1-m,-1}^{0}\|_{L^{2}}.
\end{align*}
\red{details in the lemma}For the first term above, we have 
\begin{align*}
& \| \sum_{m=0}^{n-1} G_{1}^{m}\left( \left( m+1 \right) \gamma_{0} \left( \gamma_{0} -\bar{F}_{1} \right) +\bar{F}_{1} \left( h-\gamma_{1} \right) \right) E_{n-1-m}^{0}\|_{L^{2}} \\
& \leq \sup_{p\in \mathbb{N}^{+}} \sum_{m=0}^{n-1} |G_{1,p}^{m}\left( \left( m+1 \right) \gamma_{0,p} \left( \gamma_{0,p} -\bar{F}_{1,p} \right) +\bar{F}_{1,p} \left( h_{p}-\gamma_{1,p} \right) \right) |\sum_{i=0}^{n-1} \| E_{i}^{0}\|_{L^{2}}
\end{align*}
Similarly, for the second term, we have
\begin{align*}
& \left\| \sum_{m=0}^{n-1} G_{1}^{m}\bar{F}_{1} \left( (m+1)\gamma_{0} +\gamma_{1} \right) G_{1}G_{2}^{-1}E_{n-1-m,-1}^{0} \right\|_{L^{2}} \\
& \leq \sup_{p\in \mathbb{N}^{+}} \left| \sum_{m=0}^{n-1} G_{1,p}^{m}\bar{F}_{1,p} \left( \left( m+1 \right) \gamma_{0,p} +\gamma_{1,p} \right) G_{1,p}G_{2,p}^{-1} \right| \sum_{i=0}^{n-1} \left\| E_{i,-1}^{0} \right\|_{L^{2}}.
\end{align*}
Then for $k=1$, we have the following error estimate, 
\begin{align*}
\| E_{n+1}^{2} \|_{L^{2}} 
&\leq \sup_{p\in \mathbb{N}^{+}} \sum_{m=0}^{n-1} \left| G_{1,p}^{m}\left( (m+1) \gamma_{0,p} (\gamma_{0,p} -\bar{F}_{1,p}) + \bar{F}_{1,p} (h_{p}-\gamma_{1,p}) \right) \right| \sum_{i=0}^{n-1} \| E_{i}^{0} \|_{L^{2}} \\
&\quad + \sup_{p\in \mathbb{N}^{+}} \left| \sum_{m=0}^{n-1} G_{1,p}^{m}\bar{F}_{1,p} \left( (m+1)\gamma_{0,p} + \gamma_{1,p} \right) G_{1,p}G_{2,p}^{-1} \right| \sum_{i=0}^{n-1} \left\| E_{i,-1}^{0} \right\|_{L^{2}}.
\end{align*}

\subsubsection{The case $k=2$} 
We continue with \eqref{eqn:k=1} in the case $k=1$. Using the coupled equations \eqref{eqn:coupled}, 
\begin{align*}
E_{n+1}^{k+1} &= \sum_{m=0}^{n-1} G_{1}^{m}\left( \left( m+1 \right) \gamma_{0}^{2} + \bar{F}_{1} h \right) E_{n-1-m}^{k-1} \\
&\quad + \sum_{m=0}^{n-2} G_{1}^{m}\bar{F}_{1} \left( \left( m+1 \right) \gamma_{0} + \gamma_{1} \right) \left( \gamma_{1} D_{n-2-m}^{k-2} + h E_{n-2-m}^{k-2} \right).
\end{align*}
We then further expand $E_{n-1-m}^{k-1}$ and set $k=2$ to obtain
\begin{align*}
E_{n+1}^{3} &= \sum_{m=0}^{n-1} \sum_{j=0}^{n-2-m} G_{1}^{m+j}\left( \left( m+1 \right) \gamma_{0}^{2} +\bar{F}_{1} h \right) \left( \bar{F}_{1} D_{n-2-m-j}^{0}+\gamma_{0} E_{n-2-m-j}^{0} \right) \\
&\quad +\sum_{m=0}^{n-2} G_{1}^{m}\bar{F}_{1} \left( \left( m+1 \right) \gamma_{0} +\gamma_{1} \right) \left( \gamma_{1} D_{n-2-m}^{0}+hE_{n-2-m}^{0} \right) \\
&= \sum_{i=0}^{n-2} G_{1}^{i}\left( \sum_{m=0}^{i} \left( \left( m+1 \right) \gamma_{0}^{2} +\bar{F}_{1} h \right) \right) \left( \bar{F}_{1} D_{n-2-i}^{0}+\gamma_{0} E_{n-2-i}^{0} \right) \\
&\quad +\sum_{m=0}^{n-2} G_{1}^{m}\bar{F}_{1} \left( \left( m+1 \right) \gamma_{0} +\gamma_{1} \right) \left( \gamma_{1} D_{n-2-m}^{0}+hE_{n-2-m}^{0} \right) \\
&= \sum_{i=0}^{n-2} G_{1}^{i} \left( \bar{F}_{1} \left( \left( i+1 \right) \left( \frac{\gamma_{0}^{2} \left( i+2 \right)}{2} +\bar{F}_{1} h+\gamma_{0} \gamma_{1} \right) +\gamma_{1}^{2} \right) D_{n-2-i}^{0} \right. \\
&\quad \left. + \left( \left( i+1 \right) \frac{\gamma_{0}^{3} \left( i+2 \right)}{2} +2\left( i+1 \right) \bar{F}_{1} h\gamma_{0} +\bar{F}_{1} h\gamma_{1} \right) E_{n-2-i}^{0} \right).
\end{align*}

\newpage
\section{Linear Convergence for the Linear Parabolic Problem}

\subsection{Preliminaries}
Consider the following linear parabolic problem
\begin{equation*}
		\left \{
		\begin{aligned}
			u' (t)+ A u(t) &= f(t), \quad 0<t<T,\\
			 u(0)&=u_0 .
		\end{aligned}\right .
\end{equation*}
Thanks to Lemma 10.3 in \cite{thomee2007galerkin}, we are able to give the explicit formula of our FPs:
\begin{align*}
&F\left( T_{0}+n\Delta t,T_{0}+(q-1)\Delta t,u_{0},\cdots ,u_{q-1} \right) \\
    &= \sum_{s=0}^{q-1} \beta_{ns} \left( \Delta tA \right) u_{s} + \Delta t\sum_{j=q}^{n} \beta_{n-j} \left( \Delta tA \right) f\left( T_{0}+j\Delta t \right) \\
    &:= \sum_{s=0}^{q-1} F_{s}^{\left( n-q+1 \right)}(\Delta t A) u_{s} + N^{\left( n-q+1 \right)}\left( f \right) \left( T_{0}+(q-1)\Delta t \right),
\end{align*}
where the \(\beta_j(\lambda)\) and \(\beta_{ns}(\lambda)\) are defined by

$$\widetilde{\beta}(s)=\sum_{j = 0}^{\infty}\beta_j(\lambda)\zeta^j := (\widetilde{\alpha}(\zeta)+\lambda)^{-1}, \quad \beta_{ns}(\lambda)=-\sum_{j = q - s}^{q}\beta_{n - s - j}(\lambda)\alpha_j,$$
with $\tilde{\alpha} \left( \zeta \right) =\alpha_{j} \zeta^{j}$, where the $\alpha_j$ are the coefficients defined by $\tilde{\partial_{q}} U^{n}=\frac{1}{\Delta t} \sum_{j=0}^{q} \alpha_{j} U^{n-j}$.

For convenience, we define several rational functions,  
\begin{equation*}
    h_1(z) = \max_{0 \leq i \leq q-1} \left| \sum_{s=0}^{q-1} \Big( F_{s}^{(J-i)}(z) - F_{s}^{(J)}(z) \Big) \right|,~\gamma_1(z) = \max_{0 \leq i \leq q-1}  \left| F_{s}^{(J-i)}(z) - F_{s}^{(J)}(z) \right|,  
\end{equation*}
\begin{equation*}
    \gamma_0 (z) = \sum_{s=0}^{q-1} F_{s}^{\left( J \right)}\left( z \right) -R\left( Jz \right),~C^{(J)} (z)=\max_{1\leq s\leq q-1} |F_{s}^{\left( J \right)}\left( z \right) |.
\end{equation*}

\newpage
Without loss of generality, we consider the linear problem, $u'+ Au=0$ and the BDF2 scheme as FPs in the Parareal algorithm. Then the multi-step Parareal can be expressed as 
\begin{align} 
\left\{
\begin{aligned}
U_{n+1}^{k+1}&=G_{1}\left( U_{n}^{k+1} \right) +F_{1}\left( U_{n,-1}^{k} \right) +F_{2}\left( U_{n}^{k} \right) -G_{1}\left( U_{n}^{k} \right), \\
U_{n+1,-1}^{k+1}&=G_{2}\left( U_{n}^{k+1} \right) +F_{1}^{\prime}\left( U_{n,-1}^{k} \right) +F_{2}'\left( U_{n}^{k} \right) -G_{2}\left( U_{n}^{k} \right).
\end{aligned}
\right.
\end{align}
The exact solution also satisfies \begin{align*}
\left\{
\begin{aligned}
U_{n+1}      &= F_{1}\left( U_{n,-1} \right) + F_{2}\left( U_{n} \right), \\
U_{n+1,-1} &= F_{1}^{\prime}\left( U_{n,-1} \right) + F_{2}^{\prime}\left( U_{n} \right).
\end{aligned}
\right.
\end{align*} 
Thus, we define the Parareal error $E_n^k = U_n^k - U_n$ and $E_{n,-1}^k = U_{n,-1}^k - U_{n,-1}$, which satisfy
\begin{align}
\left\{
\begin{aligned}
E_{n+1}^{k+1}&=G_{1}\left( E_{n}^{k+1} \right) +F_{1}\left( E_{n,-1}^{k} \right) +F_{2}\left( E_{n}^{k} \right) -G_{1}\left( E_{n}^{k} \right), \\
E_{n+1,-1}^{k+1}&=G_{2}\left( E_{n}^{k+1} \right) +F_{1}^{\prime}\left( E_{n,-1}^{k} \right) +F_{2}'\left( E_{n}^{k} \right) -G_{2}\left( E_{n}^{k} \right).
\end{aligned}
\right.
\end{align}
If $G_2$ is invertible, the second equation in \eqref{eqn:para_err} is equal to 
\begin{equation*}
G_{1}G_{2}^{-1}\left( E_{n+1,-1}^{k+1} \right) =G_{1}\left( E_{n}^{k+1} \right) +G_{1}G_{2}^{-1}F_{1}^{\prime}\left( E_{n,-1}^{k} \right) +G_{1}G_{2}^{-1}F_{2}^{\prime}\left( E_{n}^{k} \right) -G_{1}\left( E_{n}^{k} \right).
\end{equation*}
Given that $F_1',~F_2'$, and $G_1 G_2^{-1}$ mutually commute, this can be rewritten as 
\begin{equation*}
    G_{1}G_{2}^{-1}\left( E_{n+1,-1}^{k+1} \right) =G_{1}\left( E_{n}^{k+1} \right) +F_{1}^{\prime}G_{1}G_{2}^{-1}\left( E_{n,-1}^{k} \right) +F_{2}^{\prime}G_{1}G_{2}^{-1}\left( E_{n}^{k} \right) -G_{1}\left( E_{n}^{k} \right).
\end{equation*}
We further introduce one new variable, $D_{n}^{k}=G_{1}G_{2}^{-1}\left( E_{n,-1}^{k} \right) -E_{n}^{k}$, which satisfies
\begin{align}
D_{n+1}^{k+1} &= \left( F_{1}^{\prime}G_{1}G_{2}^{-1}-F_{1} \right) \left( E_{n,-1}^{k} \right) +\left( F_{2}'{ G_{1}G_{2}^{-1}}-F_{2} \right) \left( E_{n}^{k} \right) \nonumber \\
&= \left( F_{1}^{\prime}-F_{1}G_{2}G_{1}^{-1} \right) \left( G_{1}G_{2}^{-1}\left( E_{n,-1}^{k} \right) -E_{n}^{k} \right) \nonumber \\
&\quad +\left( F_{1}^{\prime}+F_{2}^{\prime}G_{1}G_{2}^{-1}-F_{1}G_{2}G_{1}^{-1}-F_{2} \right) \left( E_{n}^{k} \right) \nonumber \\
&:= \gamma_{1} D_{n}^{k}+hE_{n}^{k}.
\end{align}
The Parareal error $E_{n+1}^{k+1}$ can also be related with $D_n^k$ through
\begin{align*}
E_{n+1}^{k+1} &= G_{1}\left( E_{n}^{k+1} \right) + F_{1}G_{2}G_{1}^{-1}\left( G_{1}G_{2}^{-1}\left( E_{n,-1}^{k} \right) - E_{n}^{k} \right) \\
&\quad + \left( F_{1}G_{2}G_{1}^{-1} + F_{2} - G_1 \right) \left( E_{n}^{k} \right) \\
&:= G_{1} E_{n}^{k+1} + \bar{F_1} D_{n}^{k} + \gamma_{0} E_{n}^{k}. 
\end{align*}
Then we derive the coupled equations between $E_n^k$ and $D_n^k$:
\begin{equation}
    \left\{
\begin{aligned}
D_{n+1}^{k+1} &= \gamma_{1} D_{n}^{k} + h E_{n}^{k}, \\
E_{n+1}^{k+1} &= G_{1} E_{n}^{k+1} + \bar{F}_{1} D_{n}^{k} + \gamma_{0} E_{n}^{k}.
\end{aligned}
\right.
\end{equation}
Our further estimate bases on these coupled equations. Now we start from eliminating the index $k+1$ of the right-hand side of the second equation in \eqref{eqn:coupled} by iterating itself on $n$,
\begin{align*}
E_{n+1}^{k+1} &= G_{1}^{2} E_{n-1}^{k+1}  + \sum_{i=0}^{1} G_{1}^{i}\left( \bar{F}_1 D_{n-i}^{k} + \gamma_{0} E_{n-i}^{k} \right) \\
&= G_{1}^{n+1} E_{0}^{k+1}  + \sum_{i=0}^{n} G_{1}^{i}\left( \bar{F}_1 D_{n-i}^{k} + \gamma_{0} E_{n-i}^{k} \right),
\end{align*}
where $E_{0}^{k+1}=U_{0}^{k+1}-U_{0}=0$. Then using the first equation in \eqref{eqn:coupled} to eliminate $D_i^k$ on the right-hand side,
\begin{align*}
E_{n+1}^{k+1} &= \gamma_{0} \sum_{i=0}^{n} G_{1}^{i}E_{n-i}^{k} + \bar{F_{1}} \sum_{i=0}^{n} G_{1}^{i}D_{n-i}^{k} \\
&= \gamma_{0} \sum_{i=0}^{n} G_{1}^{i}E_{n-i}^{k} + \bar{F_{1}} \sum_{i=0}^{n} G_{1}^{i}\left( hE_{n-i-1}^{k-1} + \gamma_{1} D_{n-i-1}^{k-1} \right).
\end{align*}
Our next target is to further expand $E_i^k$ on the right-hand side, because the numerical experiments indicate that we should derive staggered time steps, and if we don't follow this rule, our estimate is not accurate enough. We have the expression for $E_{n-i}^k$,
\begin{align*}
E_{n-i}^{k} &= G_{1} E_{n-1-i}^{k} + \bar{F_{1}} D_{n-1-i}^{k-1} + \gamma_{0} E_{n-1-i}^{k-1} \\
&= G_{1}^{n-i} E_{0}^{k} + \sum_{j=0}^{n-i} G_{1}^{j} \left( \gamma_{0} E_{n-1-i-j}^{k-1} + \bar{F_{1}} D_{n-1-i-j}^{k-1} \right),
\end{align*}
with $E_0^k = U_0^k - U_0=0$. 
Then we continue our expression for the Parareal error $E_{n+1}^{k+1}$:
\begin{align*}
E_{n+1}^{k+1} &= \gamma_{0} \sum_{i=0}^{n} G_{1}^{i} \sum_{j=0}^{n-i} G_{1}^{j} \left( \gamma_{0} E_{n-1-i-j}^{k-1} + \bar{F_{1}} D_{n-1-i-j}^{k-1} \right) \\
&\quad + \bar{F_{1}} h \sum_{i=0}^{n} G_{1}^{i} E_{n-1-i}^{k-1} + \bar{F_{1}} \gamma_{1} \sum_{i=0}^{n} G_{1}^{i} D_{n-1-i}^{k-1} \\
&= \gamma_{0}^{2} \sum_{i=0}^{n} \sum_{j=0}^{n-i} G_{1}^{i+j} E_{n-1-i-j}^{k-1} + \gamma_{0} \bar{F_{1}} \sum_{i=0}^{n} \sum_{j=0}^{n-i} G_{1}^{i+j} D_{n-1-i-j}^{k-1} \\
&\quad + \bar{F_{1}} h \sum_{i=0}^{n} G_{1}^{i} E_{n-1-i}^{k-1} + \bar{F_{1}} \gamma_{1} \sum_{i=0}^{n} G_{1}^{i} D_{n-1-i}^{k-1} \\
&= \gamma_{0} \sum_{m=0}^{n} (m+1) G_{1}^{m} \gamma_{0}^{2} E_{n-1-m}^{k-1} + \gamma_{0} \bar{F_{1}} \sum_{m=0}^{n} (m+1) G_{1}^{m} D_{n-1-m}^{k-1} \\
&\quad + \bar{F_{1}} h \sum_{i=0}^{n} G_{1}^{i} E_{n-1-i}^{k-1} + \bar{F_{1}} \gamma_{1} \sum_{i=0}^{n} G_{1}^{i} D_{n-1-i}^{k-1} \\
&= \sum_{m=0}^{n} G_{1}^{m} \left( (m+1) \gamma_{0}^{2} + \bar{F_{1}} h \right) E_{n-1-m}^{k-1} \\
&\quad + \sum_{m=0}^{n} G_{1}^{m} \bar{F_{1}} \left( (m+1) \gamma_{0} + \gamma_{1} \right) D_{n-1-m}^{k-1}.
\end{align*}
Note that the right-hand side of the equality above is only related to the iteration $k-1$.\red{$12345$} Now we further expand $D_{n-1-m}^{k-1}$ to iteration $k-3$ based on \eqref{eqn:coupled}: 
\begin{align*}
D_{n-1-m}^{k-1} &= h E_{n-2-m}^{k-2} + \gamma_{1} D_{n-2-m}^{k-2} \\
&= h \left( G_{1} E_{n-3-m}^{k-2} + \gamma_{0} E_{n-3-m}^{k-3} + \bar{F_{1}} D_{n-3-m}^{k-3} \right) + \gamma_{1} D_{n-2-m}^{k-2} \\
&= h \sum_{j=0}^{n-3-m} G_{1}^{j} \left( \gamma_{0} E_{n-3-m-j}^{k-3} + \bar{F_{1}} D_{n-3-m-j}^{k-3} \right) + \gamma_{1} D_{n-2-m}^{k-2}.
\end{align*}
Then we further expand $D_{n-2-m}^{k-2}$ to eliminate iteration $k-2$ on the right-hand side of the above equation, 
\begin{align*}
D_{n-1-m}^{k-1} &= h \sum_{j=0}^{n-3-m} G_{1}^{j} \left( \gamma_{0} E_{n-3-m-j}^{k-3} + \bar{F_{1}} D_{n-3-m-j}^{k-3} \right) + \gamma_{1} h E_{n-3-m}^{k-3} + \gamma_{1}^{2} D_{n-3-m}^{k-3} \\
&= h \gamma_{0} \sum_{j=0}^{n-3-m} G_{1}^{j} E_{n-3-m-j}^{k-3} + \gamma_{1} h E_{n-3-m}^{k-3} + h \bar{F_{1}} \sum_{j=0}^{n-3-m} G_{1}^{j} D_{n-3-m-j}^{k-3} + \gamma_{1}^{2} D_{n-3-m}^{k-3}. 
\end{align*}
Finally, we take the expression of $D_{n-1-m}^{k-1}$ back to the Parareal error $E_{n+1}^{k+1}$,  
\begin{align*}
E_{n+1}^{k+1} &= \sum_{m=0}^{n} G_{1}^{m} \left( (m+1) \gamma_{0}^{2} + \bar{F_{1}} h \right) E_{n-1-m}^{k-1} \\
&\quad + \sum_{m=0}^{n} G_{1}^{m} \bar{F_{1}} \left( (m+1) \gamma_{0} + \gamma_{1} \right) h \left( \gamma_{0} \sum_{j=0}^{n-3-m} G_{1}^{j} E_{n-3-m-j}^{k-3} + \gamma_{1} E_{n-3-m}^{k-3} \right) \\
&\quad + \sum_{m=0}^{n} G_{1}^{m} \bar{F_{1}} \left( (m+1) \gamma_{0} + \gamma_{1} \right) \left( h \bar{F_{1}} \sum_{j=0}^{n-3-m} G_{1}^{j} D_{n-3-m-j}^{k-3} + \gamma_{1}^{2} D_{n-3-m}^{k-3} \right).
\end{align*}
Now we take a close look at the equation above. The first row involves $E_i^{k-1}$, the second row involves $E_i^{k-3}$, and the third row involves $D_i^{k-3}$. To analyze the $L^2$ norm of $E_{n+1}^{k+1}$, we consider the $p$-th spectrum of the elliptic operator $A$ and take the inner product with the eigenfunction $\lambda_p$.  
\begin{align*}
(E_{n+1}^{k+1},\lambda_p) &= \sum_{m=0}^{n} G_{1}^{m} \left( (m+1) \gamma_{0}^{2} + \bar{F_{1}} h \right) (E_{n-1-m}^{k-1},\lambda_p)  \\
&\quad + \sum_{m=0}^{n} G_{1}^{m} \bar{F_{1}} \left( (m+1) \gamma_{0} + \gamma_{1} \right) h \left( \gamma_{0} \sum_{j=0}^{n-3-m} G_{1}^{j} (E_{n-3-m-j}^{k-3},\lambda_p) + \gamma_{1} (E_{n-3-m}^{k-3},\lambda_p) \right) \\
&\quad + \sum_{m=0}^{n} G_{1}^{m} \bar{F_{1}} \left( (m+1) \gamma_{0} + \gamma_{1} \right) \left( h \bar{F_{1}} \sum_{j=0}^{n-3-m} G_{1}^{j} (D_{n-3-m-j}^{k-3},\lambda_p) + \gamma_{1}^{2} (D_{n-3-m}^{k-3},\lambda_p)  \right).
\end{align*}
Let $e_n^k=(E_n^k,\lambda_p)$ and $d_n^k = (D_n^k,\lambda_p)$. We omit the explicit dependence of $e_n^k$, $d_n^k$ and all the operators appear above on $p$ for brevity. 
\begin{align*}
e_{n+1}^{k+1} &= \sum_{m=0}^{n} G_{1}^{m} \left( (m+1) \gamma_{0}^{2} + \bar{F_{1}} h \right) e_{n-1-m}^{k-1} \\
&\quad + \sum_{m=0}^{n} G_{1}^{m} \bar{F_{1}} \left( (m+1) \gamma_{0} + \gamma_{1} \right) h \left( \gamma_{0} \sum_{j=0}^{n-3-m} G_{1}^{j} e_{n-3-m-j}^{k-3} + \gamma_{1} e_{n-3-m}^{k-3} \right) \\
&\quad + \sum_{m=0}^{n} G_{1}^{m} \bar{F_{1}} \left( (m+1) \gamma_{0} + \gamma_{1} \right) \left( h \bar{F_{1}} \sum_{j=0}^{n-3-m} G_{1}^{j} d_{n-3-m-j}^{k-3} + \gamma_{1}^{2} d_{n-3-m}^{k-3} \right).
\end{align*}

\textbf{(Term I)} This term represents the primary error component of $E_{n+1}^{k+1}$, as it incorporates the complete error propagation from the $(k-1)$-th iteration. 
\begin{align*}
\left| \sum_{m=0}^{n} G_{1}^{m}\left( (m+1)\gamma_{0}^{2} +\bar{F_{1}} h \right) e_{n-1-m}^{k-1}\right|
&\leq \max_{i} |e_{i}^{k-1}|\sum_{m=0}^{n} |G_{1}|^{m}\left| (m+1) \gamma_{0}^{2} +\bar{F_{1}} h\right| \\
&\leq \gamma_{a,p} \max_{i} |e_{i}^{k-1}|.
\end{align*}
We have to make this bound clear: 
\begin{equation}\label{eqn:g_a}
\gamma_{a,p}:=\sum_{m=0}^{n} |G_{1}|^{m}\left| (m+1) \gamma_{0,p}^{2} +\bar{F_{1}} h_p\right|. 
\end{equation}

\begin{equation*}
\gamma_{a,p} :=\left( \frac{\gamma_{0,p}}{1-|G_{1}\left( Jz_{p} \right) |} \right)^{2} +\frac{|\bar{F}_{1,p} h_{p}|}{1-|G_{1}\left( Jz_{p} \right) |},
\end{equation*}
where $z_p = \Delta t\lambda_p$ and 
\begin{align*}
\gamma_{0,p} &= \left| \left( F_{1}(z_{p}) G_{2}(Jz_{p}) G_{1}^{-1}(Jz_{p}) + F_{2}(z_{p}) \right) - G_{1}(Jz_{p}) \right|, \\
\bar{F}_{1,p} &= |F_{1}(z_{p}) G_{2}(Jz_{p}) G_{1}^{-1}(Jz_{p})|, \\
h_{p} &= \left| F_{1}'(z_{p}) + F_{2}'(z_{p}) G_{1}(Jz_{p})G_2^{-1}(Jz_p) - F_{1}(z_{p}) G_{2}(Jz_{p}) G_{1}^{-1}(Jz_{p}) - F_{2}(z_{p}) \right|.
\end{align*}

\textbf{(Term II)} We bound for the second summation: 
\begin{align*}
&\left| h\gamma_{0} \bar{F_{1}} \sum_{m=0}^{n} \sum_{j=0}^{n-3-m} G_{1}^{m+j}\left( \left( m+1 \right) \gamma_{0} +\gamma_{1} \right) e_{n-3-m-j}^{k-3} \right. \\
&\quad \left. + h\gamma_{1} \bar{F_{1}} \sum_{m=0}^{n} G_{1}^{m}\left( \left( m+1 \right) \gamma_{0} +\gamma_{1} \right) e_{n-3-m}^{k-3} \right| \\
&\leq \max_{i} |e_{i}^{k-3}| \left( |h\gamma_{0} \bar{F_{1}} |\sum_{m=0}^{n} \sum_{j=0}^{n-3-m} |G_{1}|^{m+j}|\left( m+1 \right) \gamma_{0} +\gamma_{1} | \right) \\
&\quad + \max_{i} |e_{i}^{k-3}| \left( |h\gamma_{1} \bar{F_{1}} |\sum_{m=0}^{n} |G_{1}|^{m}|\left( m+1 \right) \gamma_{0} +\gamma_{1} | \right) \\
&\leq \left( \frac{|\bar{F_{1}} h\gamma_{0}^{2} |}{\left( 1-|G_{1}| \right)^{3}} + \frac{|\bar{F_{1}} h\gamma_{0} \gamma_{1} |}{\left( 1-|G_{1}| \right)^{2}} \right) \max_{i} |e_{i}^{k-3}| \\
&\quad + \left( \frac{|\bar{F_{1}} h\gamma_{0} \gamma_{1} |}{\left( 1-|G_{1}| \right)^{2}} + \frac{|\bar{F_{1}} h\gamma_{1}^{2} |}{1-|G_{1}|} \right) \max_{i} |e_{i}^{k-3}| \\
&\leq  \frac{|\bar{F_{1}} h|}{1-|G_{1}|} \left( |\gamma_{1} |+\frac{|\gamma_{0} |}{1-|G_{1}|} \right)^{2} \max_{i} |e_{i}^{k-3}|.
\end{align*}
We should make the bound clear: 
\begin{equation}\label{eqn:g_b}
    \gamma_{b,p} :=\frac{\bar{F}_{1,p} h_{p}}{1-|G_{1}\left( Jz_{p} \right) |} \left( \gamma_{1,p} +\frac{\gamma_{0,p}}{1-|G_{1}\left( Jz_{p} \right) |} \right)^{2},
\end{equation}
where 
\begin{equation*}
  \gamma_{1,p} =|F_{1}^{\prime}(z_{p})-F_{1}(z_{p})G_{2}(Jz_{p})G_{1}^{-1}(Jz_{p})|.
\end{equation*}

\textbf{(Term III)} 
We analyze the final summation term, specifically examining the misalignment-induced error $d_i^{k-3}$. Our approach involves expressing $d_i^{k+1}$ in terms of $d_i^{k-1}$ and $e_i^{k-1}$, which enables us to establish an upper bound for $d_i^{k+1}$. We start from \eqref{eqn:coupled},
\begin{align*}
d_{n+1}^{k+1} &= h e_{n}^{k} + \gamma_{1} d_{n}^{k} \\
&= h \left( G_{1} e_{n-1}^{k} + \gamma_{0} e_{n-1}^{k-1} + \bar{F_{1}} d_{n-1}^{k-1} \right) + \gamma_{1} d_{n}^{k} \\
&= h \sum_{j=0}^{n-1} G_{1}^{j} \left( \gamma_{0} e_{n-1-j}^{k-1} + \bar{F_{1}} d_{n-1-j}^{k-1} \right) + +\gamma_{1} d_{n}^{k}.
\end{align*}
Then we further expand $d_n^k$ with $d_{n}^{k}=he_{n-1}^{k-1}+\gamma_{1} d_{n-1}^{k-1}$,
\begin{align*}
d_{n+1}^{k+1} &= h\sum_{j=0}^{n-1} G_{1}^{j}\left(\gamma_{0} e_{n-1-j}^{k-1} + \bar{F_{1}} d_{n-1-j}^{k-1}\right) + \gamma_{1}\left(he_{n-1}^{k-1} + \gamma_{1} d_{n-1}^{k-1}\right) \\
&= h\gamma_{0} \sum_{j=1}^{n-1} G_{1}^{j}e_{n-1-j}^{k-1} + h(\gamma_{1}+\gamma_0) e_{n-1}^{k-1}+ h\bar{F_{1}} \sum_{j=1}^{n-1} G_{1}^{j}d_{n-1-j}^{k-1} + (h\bar{F}_1+\gamma_{1}^{2}) d_{n-1}^{k-1}.
\end{align*}
We take the absolute norm on both sides and derive the bound for $d_{n+1}^{k+1}$, 
\begin{align*}
    |d_{n+1}^{k+1}| &\leq \left( \frac{|G_{1} h \gamma_{0}|}{1 - |G_{1}|} + |h (\gamma_{0} + \gamma_{1})| \right) \max_{i} |e_{i}^{k-1}| 
+ \left( \frac{|G_{1} h \bar{F}_{1}|}{1 - |G_{1}|} + |h \bar{F}_{1} + \gamma_{1}^{2}| \right) \max_{i} |d_{i}^{k-1}|\\
&:= \gamma_{d,p}\max_{i}|e_i^{k-1}| + \gamma_{e,p} \max_{i} |d_i^{k-1}|,
\end{align*}
where we define the linear bounds, 
\begin{equation}\label{eqn:g_d,d_e}
    \gamma_{d,p} =\frac{|G_{1}\left( Jz_{p} \right) h_{p}\gamma_{0,p} |}{1-|G_{1}\left( Jz_{p} \right) |} +|h_{p}\left( \gamma_{0,p} +\gamma_{1,p} \right) |,~\gamma_{e,p} =\frac{|G_{1}\left( Jz_{p} \right) h_{p}\bar{F}_{1,p} |}{1-|G_{1}\left( Jz_{p} \right) |} +|h_{p}\bar{F}_{1,p} +\gamma_{1,p}^{2} |.
\end{equation}
Then we derive the bound for the last summation, 
\begin{align*}
&\left| \sum_{m=0}^{n} G_{1}^{m}\bar{F}_{1} \left( (m+1)\gamma_{0} + \gamma_{1} \right) \left( h\bar{F}_{1} \sum_{j=0}^{n-3-m} G_{1}^{j}d_{n-3-m-j}^{k-3} + \gamma_{1}^{2} d_{n-3-m}^{k-3} \right) \right| \\
&\leq \left| h\bar{F}_{1}^{2} \sum_{m=0}^{n} \sum_{j=0}^{n-3-m} G_{1}^{m+j} \left( (m+1)\gamma_{0} + \gamma_{1} \right) d_{n-3-m-j}^{k-3} \right| \\
&\quad + \left| \gamma_{1}^{2} \bar{F}_{1} \sum_{m=0}^{n} G_{1}^{m} \left( (m+1)\gamma_{0} + \gamma_{1} \right) d_{n-3-m}^{k-3} \right| \\
&\leq \left( \frac{|h\bar{F}_{1}^{2} \gamma_{0}|}{(1-|G_{1}|)^{3}} + \frac{|h\bar{F}_{1}^{2} \gamma_{1}|}{(1-|G_{1}|)^{2}} \right) \max_{i} |d_{i}^{k-3}|  + \left( \frac{|\bar{F}_{1} \gamma_{0} \gamma_{1}^{2}|}{(1-|G_{1}|)^{2}} + \frac{|\bar{F}_{1} \gamma_{1}^{3}|}{1-|G_{1}|} \right) \max_{i} |d_{i}^{k-3}|\\
&=  \left( \frac{|h\bar{F}_{1}^{2} \gamma_{0}|}{(1-|G_{1}|)^{3}} + \frac{|h\bar{F}_{1}^{2} \gamma_{1}|}{(1-|G_{1}|)^{2}} + \frac{|\bar{F}_{1} \gamma_{0} \gamma_{1}^{2}|}{(1-|G_{1}|)^{2}} + \frac{|\bar{F}_{1} \gamma_{1}^{3}|}{1-|G_{1}|} \right) \max_{i} |d_{i}^{k-3}|.
\end{align*}
Then we clarify the bound, but this is a little bit complicated, 
\begin{equation}\label{eqn:g_c}
    \gamma_{c,p} =\frac{h_{p}\bar{F}_{1,p}^{2} \gamma_{0,p}}{\left( 1-|G_{1}\left( Jz_{p} \right) | \right)^{3}} +\frac{h_{p}\bar{F}_{1,p}^{2} \gamma_{1,p}}{\left( 1-|G_{1}\left( Jz_{p} \right) | \right)^{2}} +\frac{\bar{F}_{1,p} \gamma_{0,p} \gamma_{1,p}^{2}}{\left( 1-|G_{1}\left( Jz_{p} \right) | \right)^{2}} +\frac{\bar{F}_{1,p} \gamma_{1,p}^{3}}{1-|G_{1}\left( Jz_{p} \right) |}.
\end{equation}

Finally, we arrive at the iteration on index $k$, based on the estimates of the three terms: 
\begin{align*}
\left\{
\begin{aligned}
|e_{n+1}^{k+1}| &\leq \gamma_{a,p} \max_{i} |e_{i}^{k-1}| + \gamma_{b,p} \max_{i} |e_{i}^{k-3}| + \gamma_{c,p} \max_{i} |d_{i}^{k-3}|, \\
|d_{n}^{k-1}|   &\leq \gamma_{d,p} \max_{i} |e_{i}^{k-3}| + \gamma_{e,p} \max_{i} |d_{i}^{k-3}|,
\end{aligned}
\right.
\end{align*}
where the left-hand side of the first equation is independent of $n$, then we take the maximum over $n$. We further introduce $x_k=\max_{i}|e_i^{k}|$ and $y_k = \max_{i} |d_i^k|$ and obtain
\begin{align}\label{eqn:bound}
\left\{
\begin{aligned}
x_{k+1} &\leq \gamma_{a,p} x_{k-1} + \gamma_{b,p} x_{k-3} + \gamma_{c,p} y_{k-3}, \\
y_{k-1} &\leq \gamma_{d,p} x_{k-3} + \gamma_{e,p} y_{k-3}.
\end{aligned}
\right.
\end{align}
Next we study the recursion, 
\begin{align*}
x_{k+1} &\leq \gamma_{a,p} x_{k-1}+\gamma_{b,p} x_{k-3}+\gamma_{c,p} \left( \gamma_{d,p} x_{k-5}+\gamma_{e,p} y_{k-5} \right) \\
&\leq \gamma_{a,p} x_{k-1}+\gamma_{b,p} x_{k-3}+\gamma_{c,p} \gamma_{d,p} x_{k-5}+\gamma_{c,p} \gamma_{e,p} y_{k-5} \\
&\leq \gamma_{a,p} x_{k-1}+\gamma_{b,p} x_{k-3}+\gamma_{c,p} \gamma_{d,p} x_{k-5}+\gamma_{c,p} \gamma_{e,p} \left( \gamma_{d,p} x_{k-7}+\gamma_{e,p} y_{k-7} \right) \\
&\leq \gamma_{a,p} x_{k-1}+\gamma_{b,p} x_{k-3}+\gamma_{c,p} \gamma_{d,p} \left( x_{k-5}+\gamma_{e,p} x_{k-7} \right) +\gamma_{c,p} \gamma_{e,p}^{2} y_{k-7} \\
&\leq \gamma_{a,p} x_{k-1}+\gamma_{b,p} x_{k-3}+\gamma_{c,p} \gamma_{d,p} \sum_{i=0}^{\lfloor \frac{k-5}{2} \rfloor} \gamma_{e,p}^{i} x_{k-5-2i}+\gamma_{c,p} \gamma_{e,p}^{\lfloor \frac{k-5}{2} \rfloor +1} y_{k-5-2\lfloor \frac{k-5}{2} \rfloor}.
\end{align*}

\subsubsection{Special case}
If $G_2$ is not invertible in \eqref{eqn:para_err}, we first take the inner product of both sides with $\lambda_p$,
\begin{align*}
\left\{
\begin{aligned}
e_{n+1}^{k+1} &= G_{1}(Jz_{p}) e_{n}^{k+1} + F_{1}(z_{p}) e_{n,-1}^{k} + F_{2}(z_{p}) e_{n}^{k} - G_{2}(Jz_{p}) e_{n}^{k}, \\
e_{n+1,-1}^{k+1} &= G_{2}(Jz_{p}) e_{n}^{k+1} + F_{1}'(z_{p}) e_{n,-1}^{k} + F_{2}'(z_{p}) e_{n}^{k} - G_{2}(Jz_{p}) e_{n}^{k}.
\end{aligned}
\right.
\end{align*}
We consider the case when $G_2(Jz_p)=0$, then the equations above reduce to XXX. \red{also consider G1=0}

\subsection{Comparison} 
The theoretical estimate is almost done, only the last step left. In this section, we compare the convergence rate between the multi-step Parareal by Maday and by our newly proposed update, through \eqref{eqn:bound}. In Maday's work, they choose $G_1 = G_2=R(\Delta TA)$ in \eqref{eqn:m-para}. We propose a more reasonable update: $G_1=R(\Delta TA)$ and $G_2 = R((\Delta T-\Delta t)A)$ in \eqref{eqn:m-para}. Next, we will illustrate these two algorithms through the recursion \eqref{eqn:bound}. 

\subsubsection{$\gamma_{a,p}$}
The expression of the convergence function $\gamma_{a,p}$ is given in \eqref{eqn:g_a}. This can be further bounded by 
\begin{equation*}
\gamma_{a,p} \leq\left( \frac{\gamma_{0,p}}{1-|G_{1}\left( Jz_{p} \right) |} \right)^{2} +\frac{|\bar{F}_{1,p} h_{p}|}{1-|G_{1}\left( Jz_{p} \right) |}.
\end{equation*}
However, this bound is not sharp when $\gamma_{0}^2(z_p)$ and $\bar{F}_1(z_p)h_p(z_p)$ have opposite signs. Then we plot the value of $\gamma_{a,p}$ when $n=1000$ to illustrate the behavior of different updates. In Fig.~\ref{fig:gamma_a_BE}, we plot the graph of $\gamma_a$ when CP is BE. As $J$ increases, the supremum of $\gamma_a$ also becomes larger. In this case, the benefit of the new update can not be observed since BE is not an accuracy solver. For comparison, in Fig.~\ref{fig:gamma_a_Exact}, we plot the graph of $\gamma_a (z)$ when we choose the exact solver as the CP, i.e., $R(z)=\text{exp}(-z)$. Different from the case for BE, as $J$ increases, the supremum of $\gamma_a$ decreases for both updates. Note that the supremum in the new update is much smaller than that one in the Maday's update, which implies that our new update is more reasonable. This can be explained through the important gradient in $\gamma_a$: $\gamma_0$ defined in \eqref{eqn:g_0},
\begin{align*}
    \gamma_{0} \left( z \right) &=F_{1}\left( z \right) +F_{2}\left( z \right) -\text{exp} \left( -Jz \right);&~\text{(Maday)}\\
    \gamma_{0} \left( z \right) &=F_{1}\left( z \right) \text{exp} \left( z \right) +F_{2}\left( z \right) -\text{exp} \left( -Jz \right);&~\text{(New)}.
\end{align*}
Obviously, the new update is more reasonable since $F_1$ and $F_2$ should not be in the same position. 

\begin{figure}[htbp]
    \centering
    \begin{subfigure}[b]{0.48\textwidth}
\includegraphics[width=\linewidth]{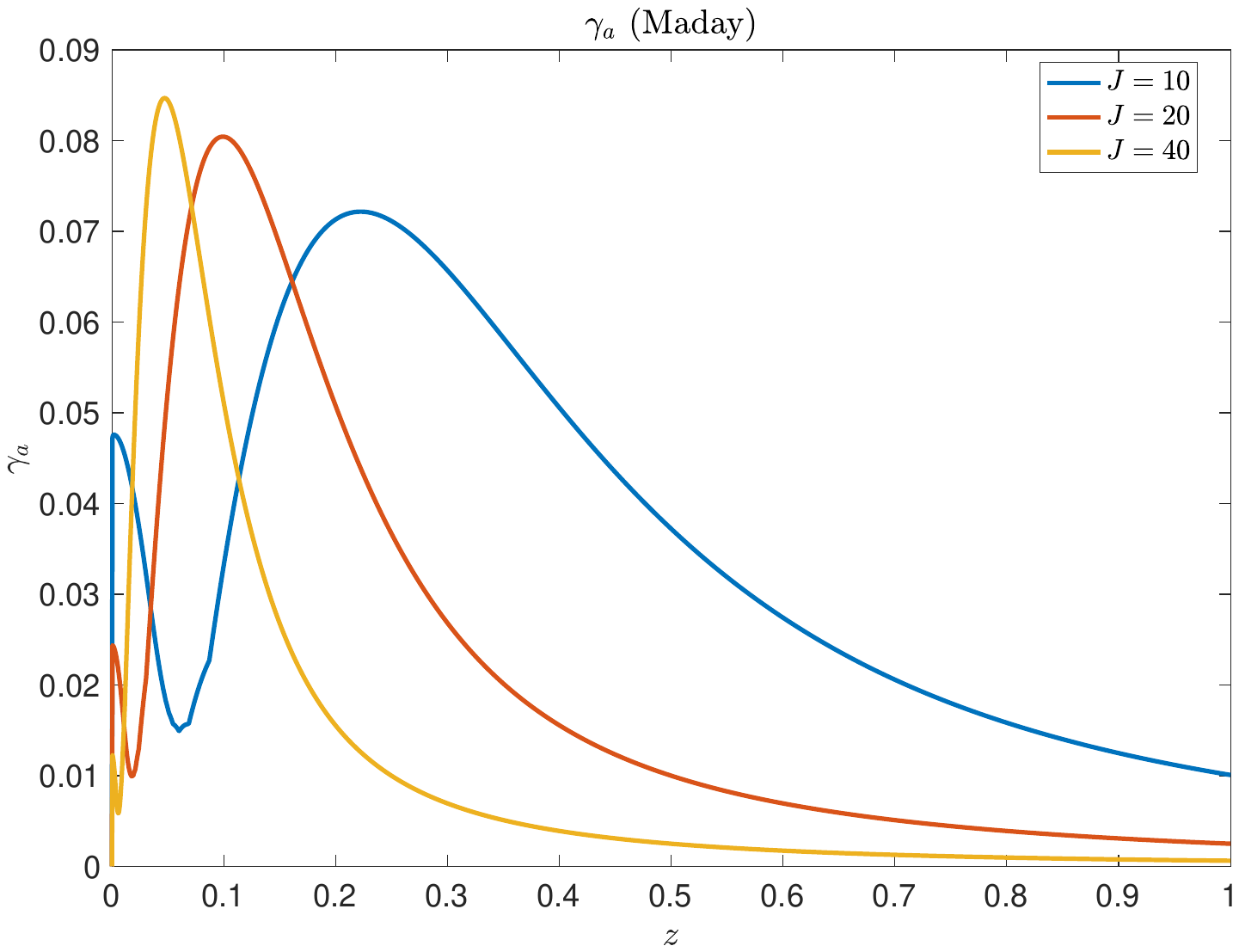}
    \end{subfigure}
    \begin{subfigure}[b]{0.48\textwidth}
        \includegraphics[width=\linewidth]{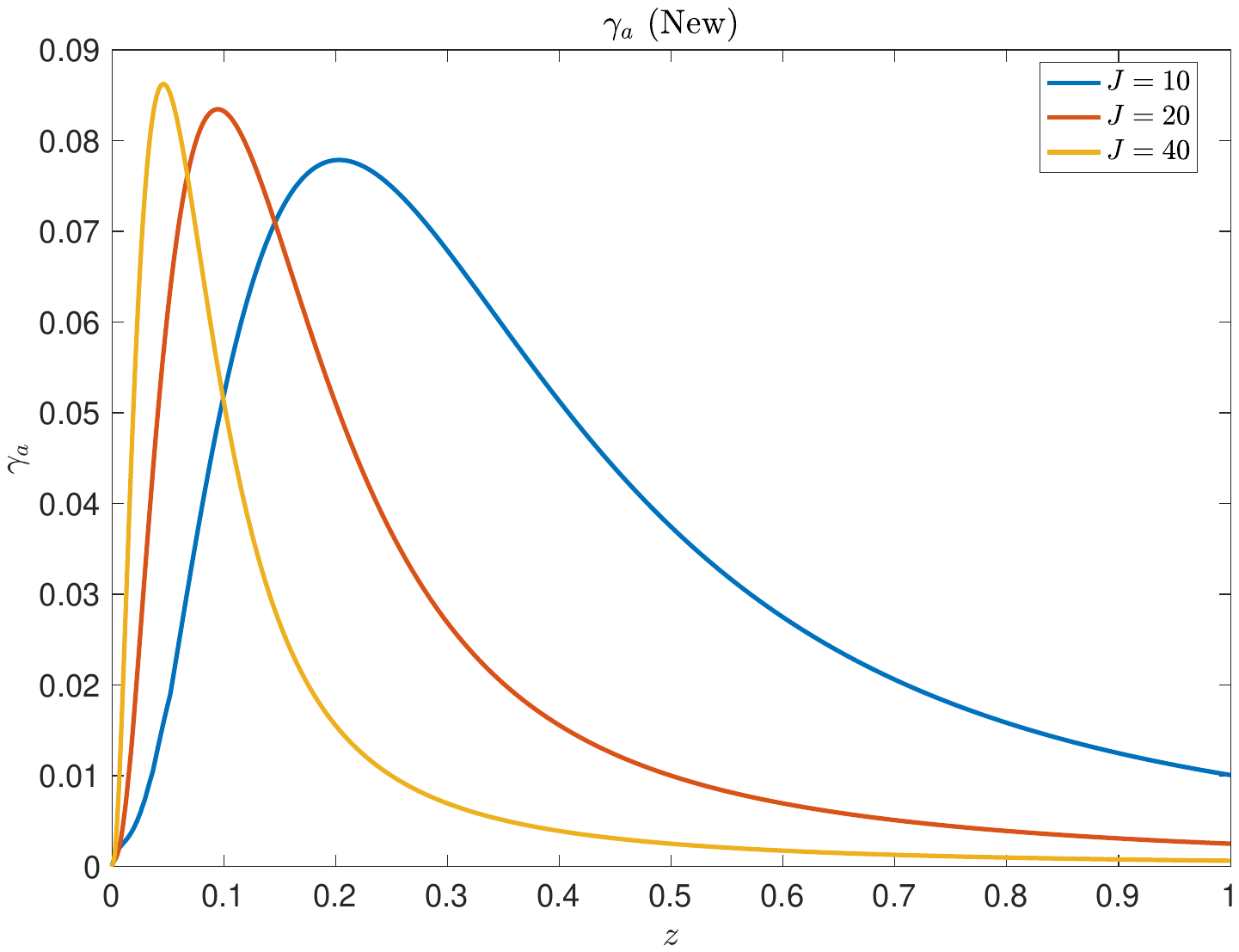}
    \end{subfigure}
    \caption{The graph of $\gamma_a(z)$ when BE is chosen as the CP. Left: Maday's update; Right: New update.}
    \label{fig:gamma_a_BE}
\end{figure}

\begin{figure}[htbp]
    \centering
    \begin{subfigure}[b]{0.48\textwidth}
\includegraphics[width=\linewidth]{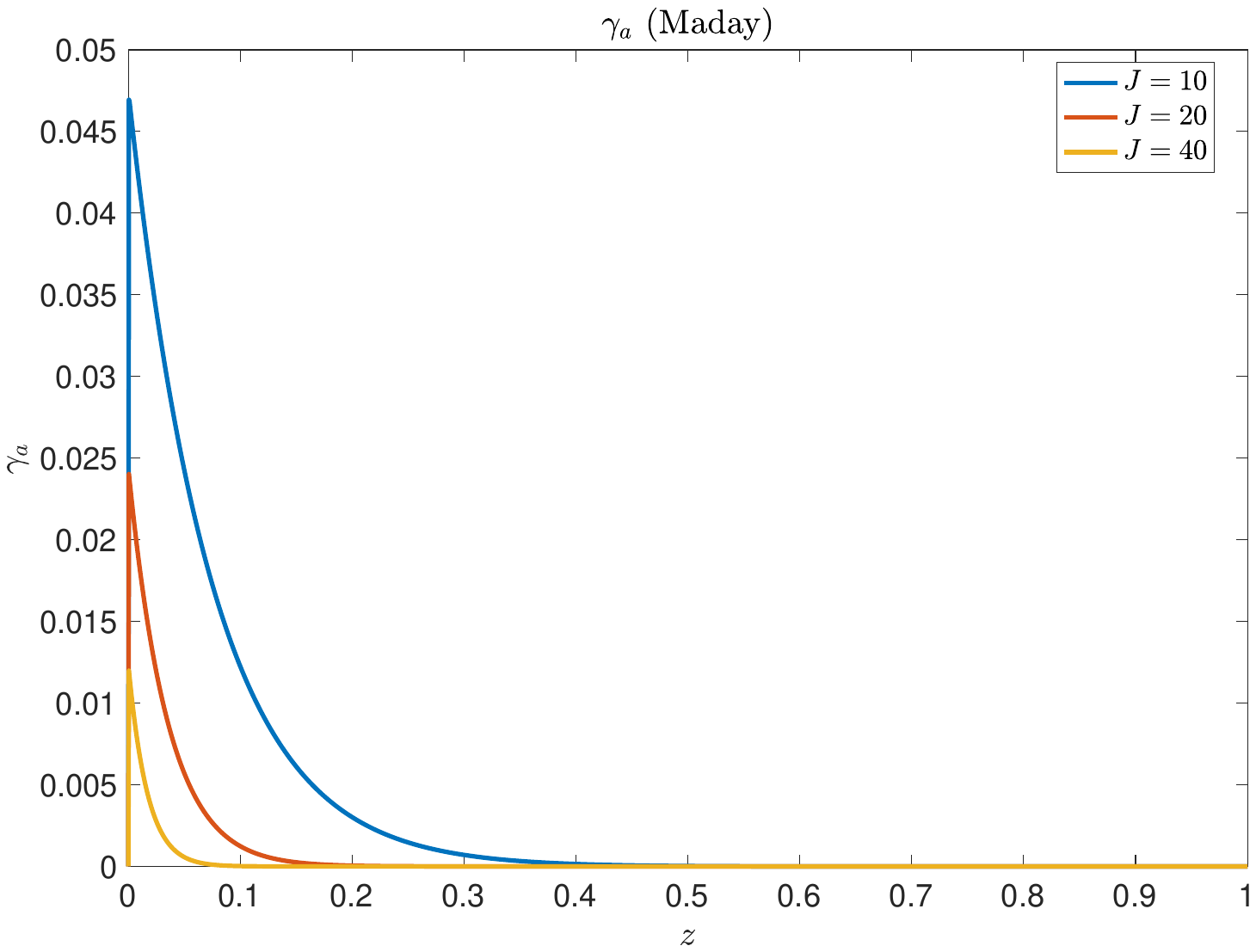}
    \end{subfigure}
    \begin{subfigure}[b]{0.48\textwidth}
        \includegraphics[width=\linewidth]{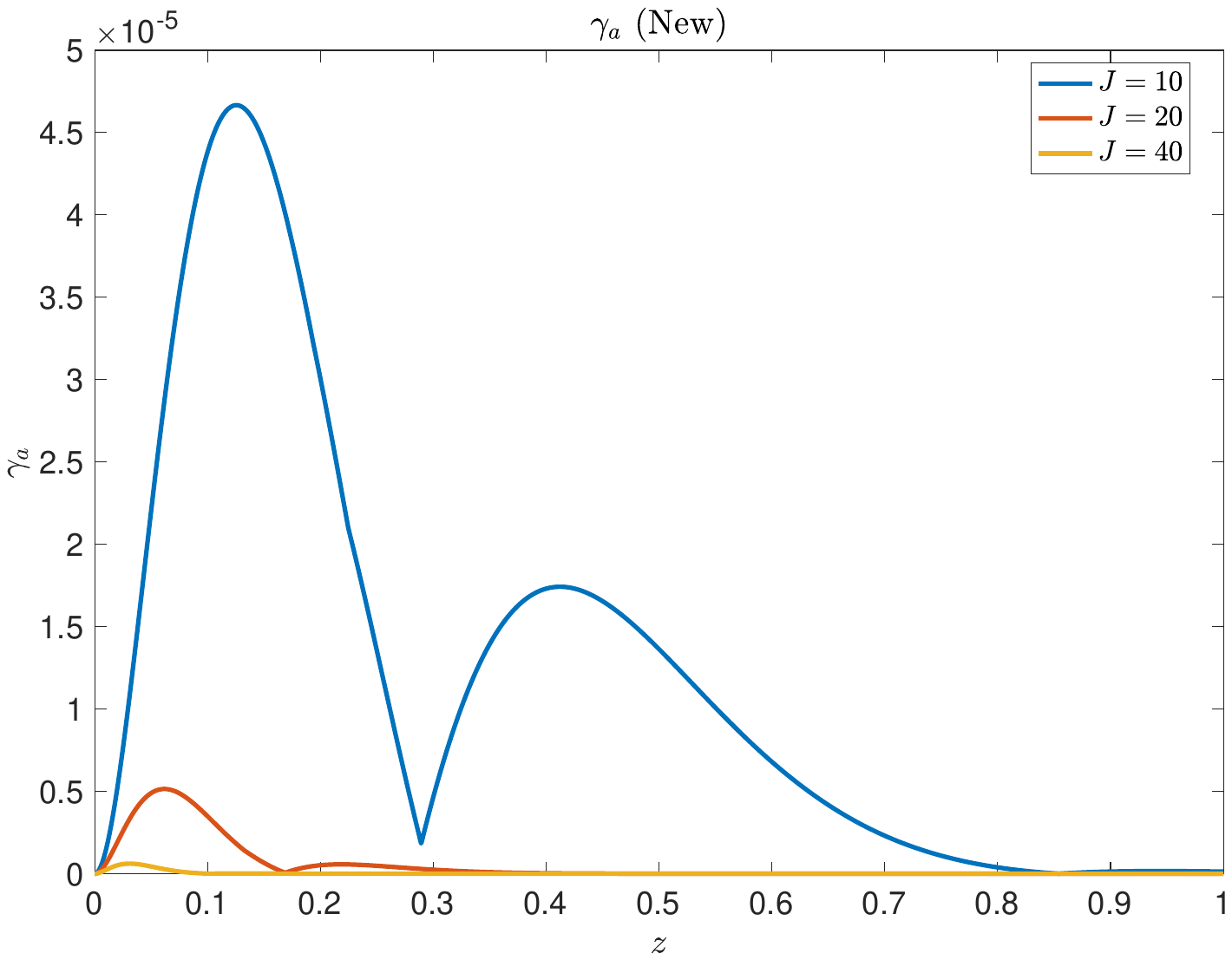}
    \end{subfigure}
    \caption{The graph of $\gamma_a(z)$ when the exact solver is chosen as the CP. Left: Maday's update; Right: New update.}
    \label{fig:gamma_a_Exact}
\end{figure}

\subsubsection{$\gamma_{b,p}$}
The expression of $\gamma_{b,p}$ is given in \eqref{eqn:g_b}. 
\begin{figure}[htbp]
    \centering
    \begin{subfigure}[b]{0.48\textwidth}
\includegraphics[width=\linewidth]{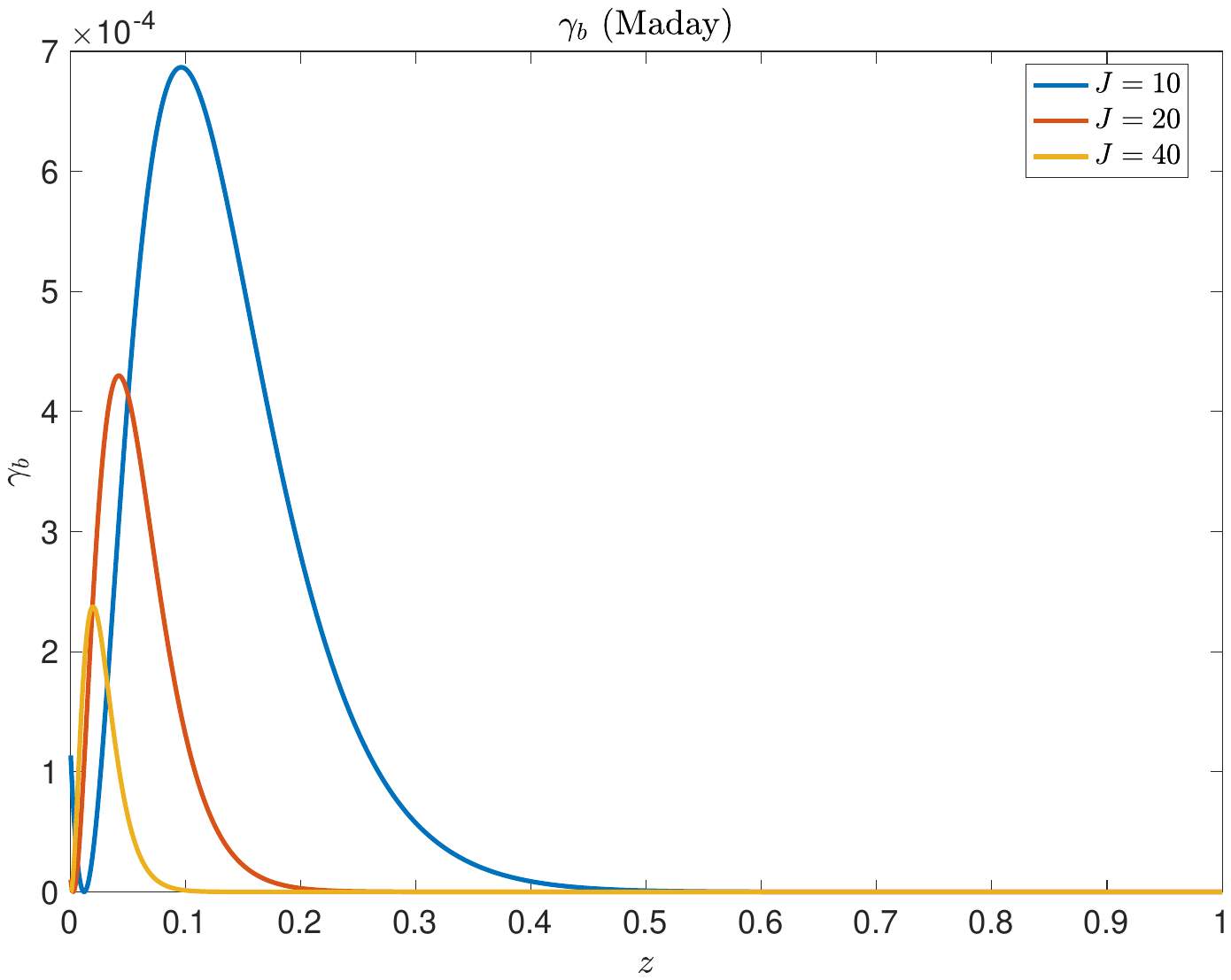}
    \end{subfigure}
    \begin{subfigure}[b]{0.48\textwidth}
        \includegraphics[width=\linewidth]{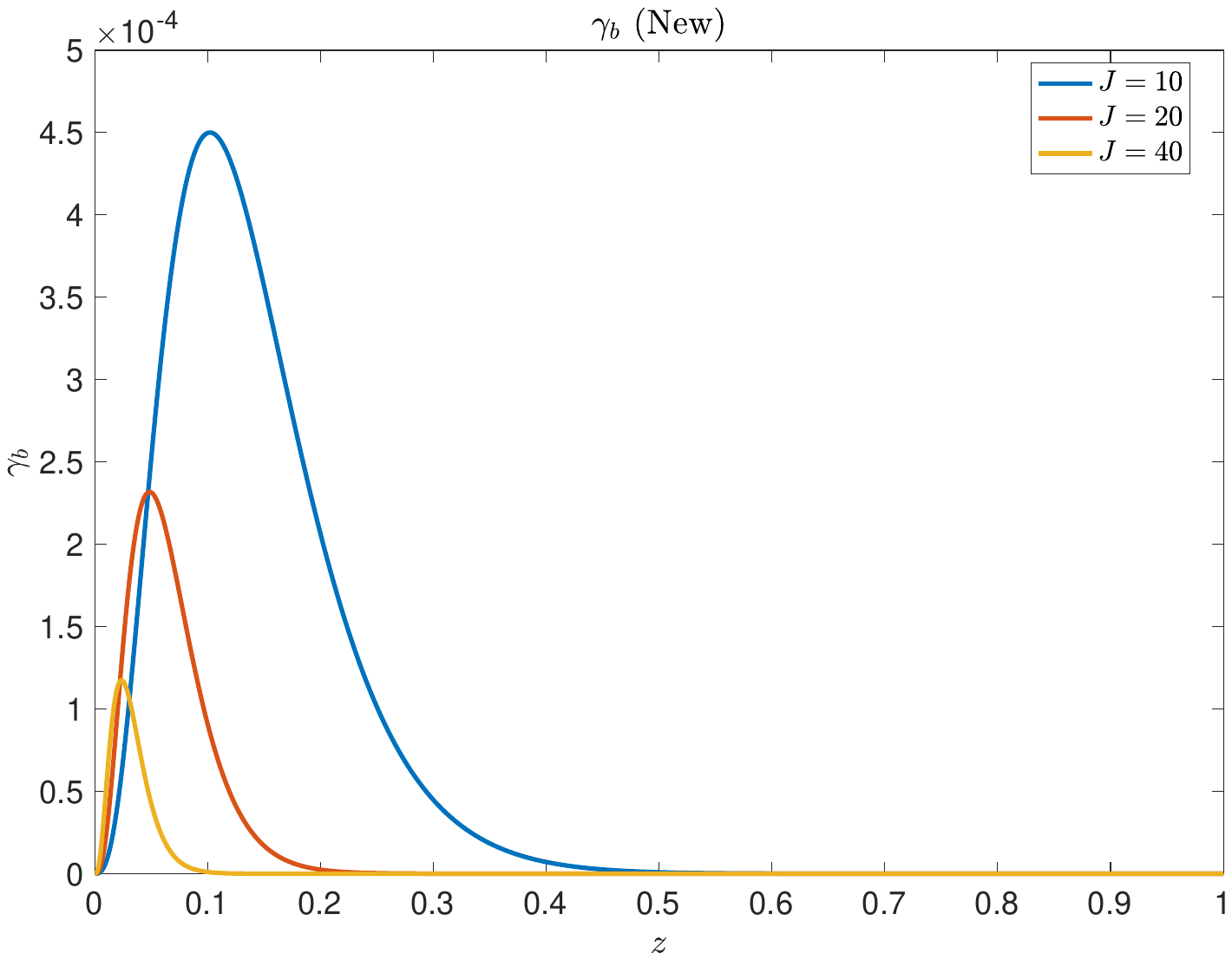}
    \end{subfigure}
    \caption{The graph of $\gamma_b(z)$ when BE is chosen as the CP. Left: Maday's update; Right: New update.}
    \label{fig:gamma_b_BE}
\end{figure}

\begin{figure}[htbp]
    \centering
    \begin{subfigure}[b]{0.48\textwidth}
\includegraphics[width=\linewidth]{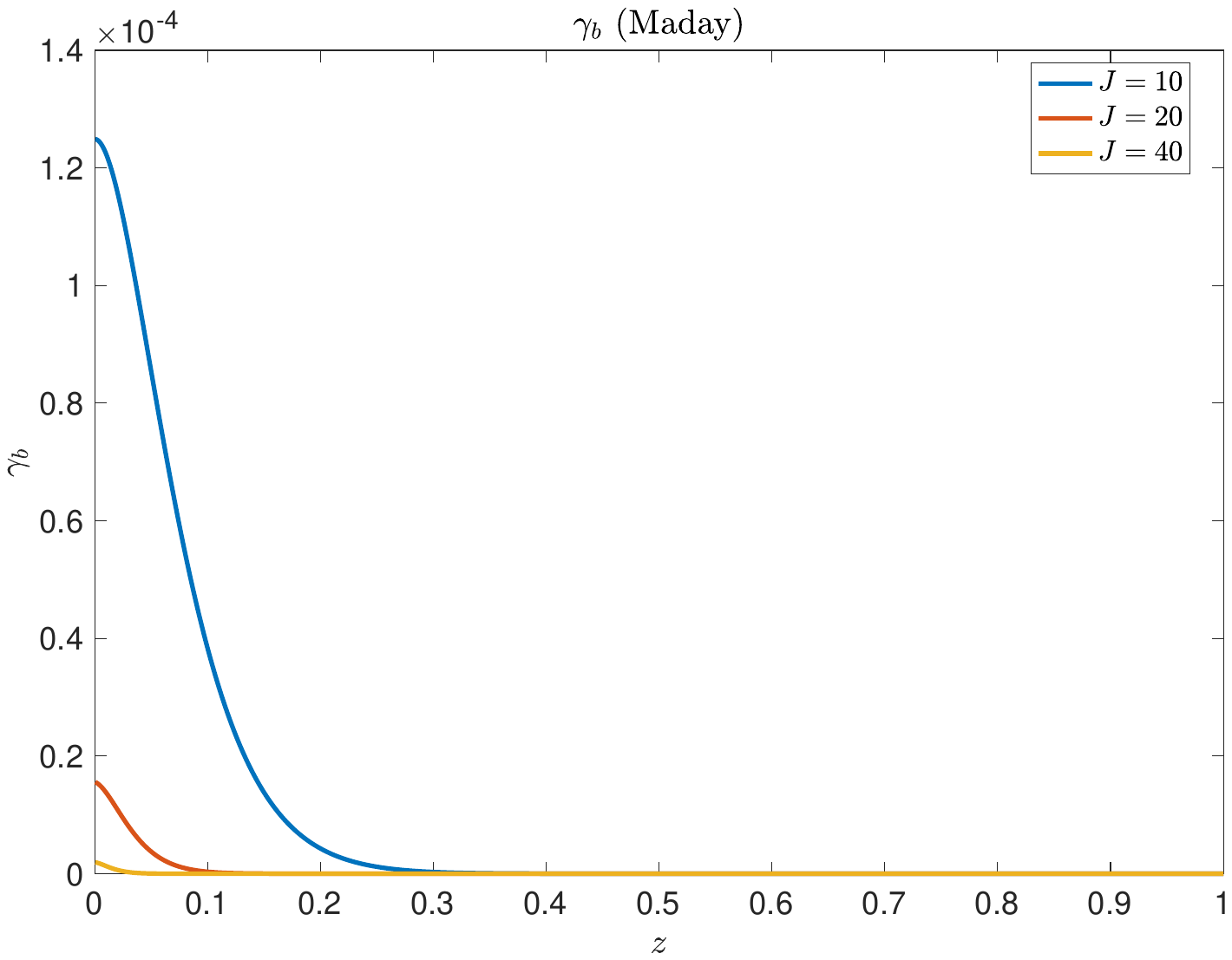}
    \end{subfigure}
    \begin{subfigure}[b]{0.48\textwidth}
        \includegraphics[width=\linewidth]{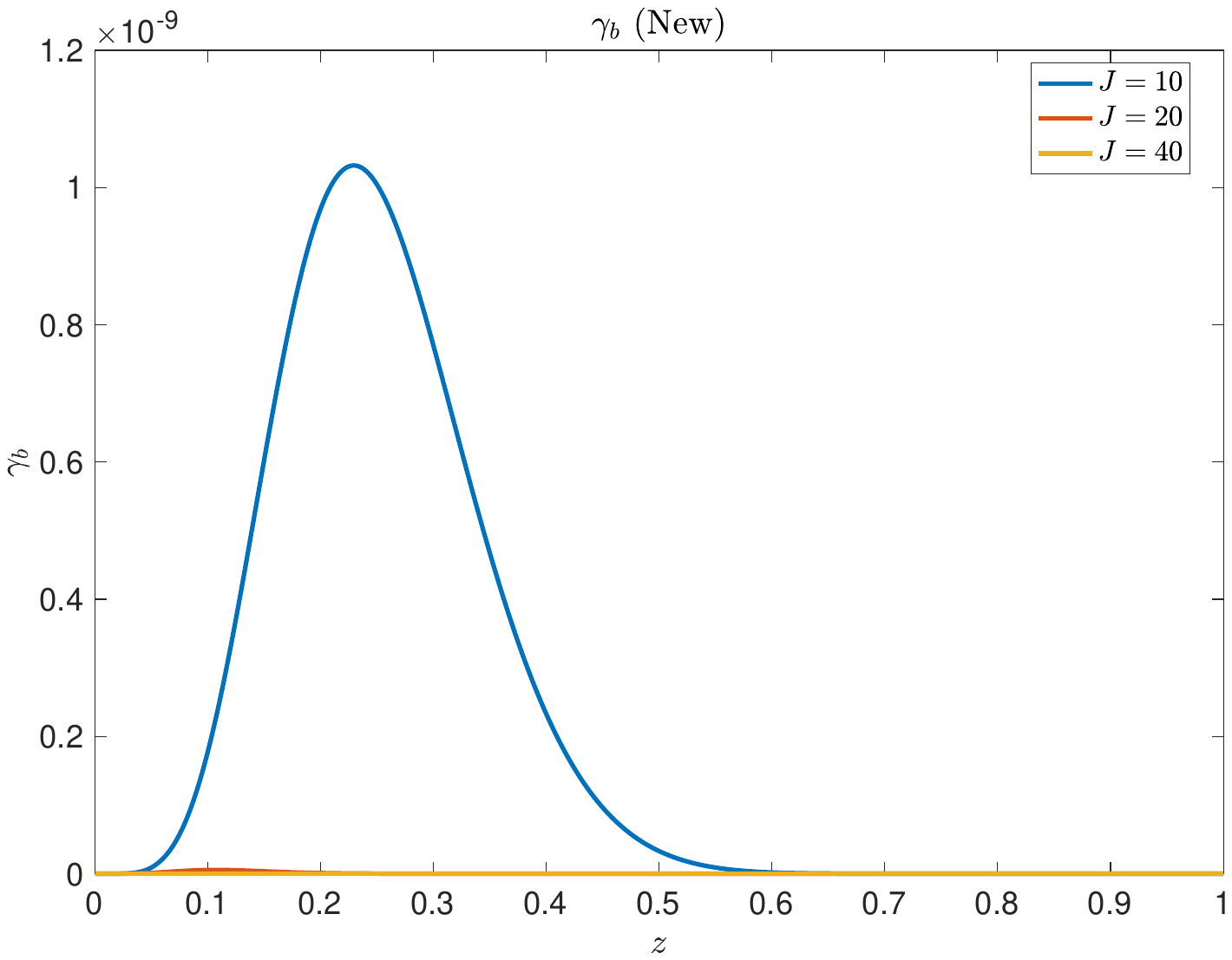}
    \end{subfigure}
    \caption{The graph of $\gamma_b(z)$ when the exact solver is chosen as the CP. Left: Maday's update; Right: New update.}
    \label{fig:gamma_b_Exact}
\end{figure}

\subsubsection{$\gamma_{c,p}\gamma_{d,p}$}
The expression of $\gamma_{c,p}\gamma_{d,p}$ is given in \eqref{eqn:g_c} and \eqref{eqn:g_d,d_e}.
\begin{figure}[htbp]
    \centering
    \begin{subfigure}[b]{0.48\textwidth}
\includegraphics[width=\linewidth]{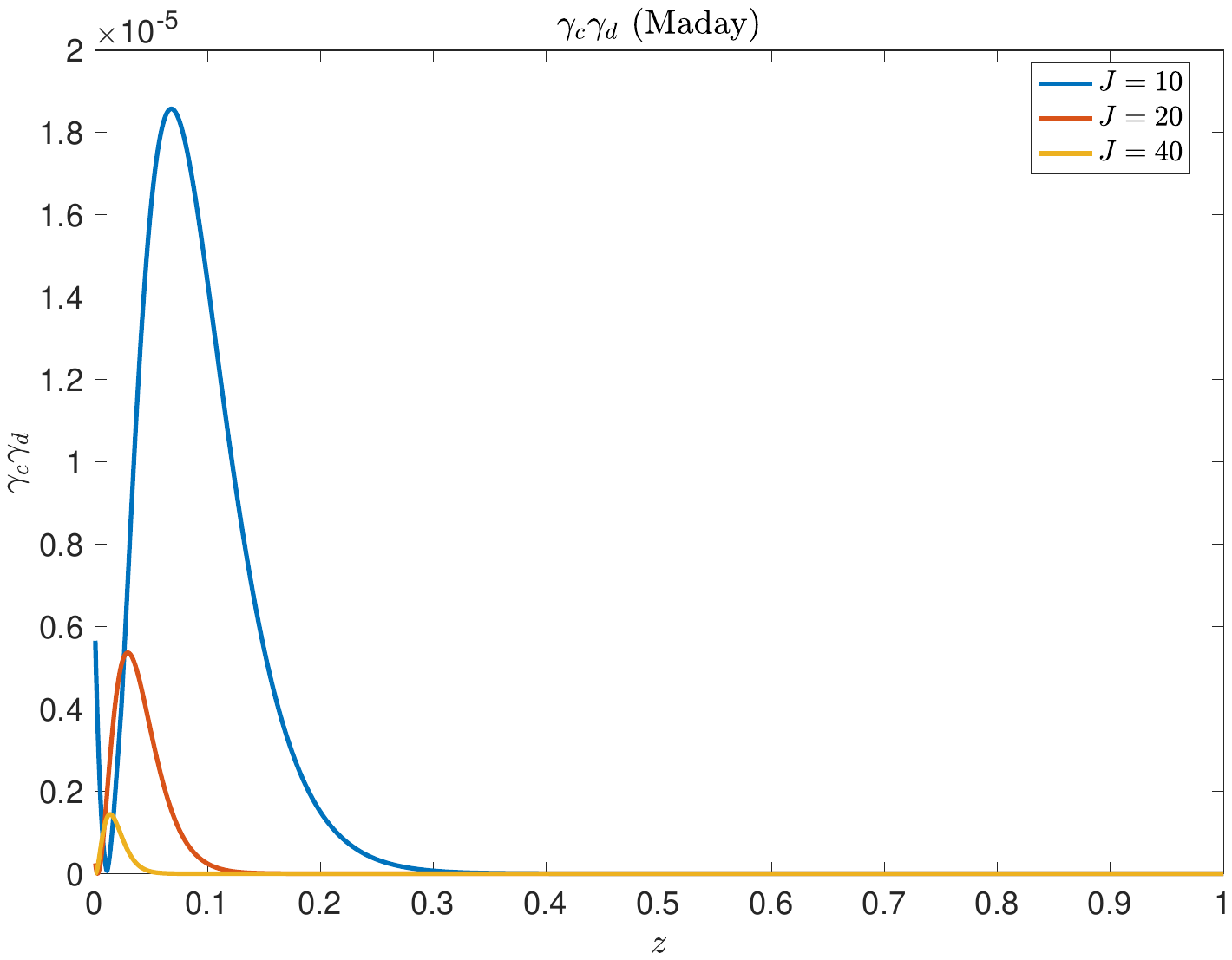}
    \end{subfigure}
    \begin{subfigure}[b]{0.48\textwidth}
    \includegraphics[width=\linewidth]{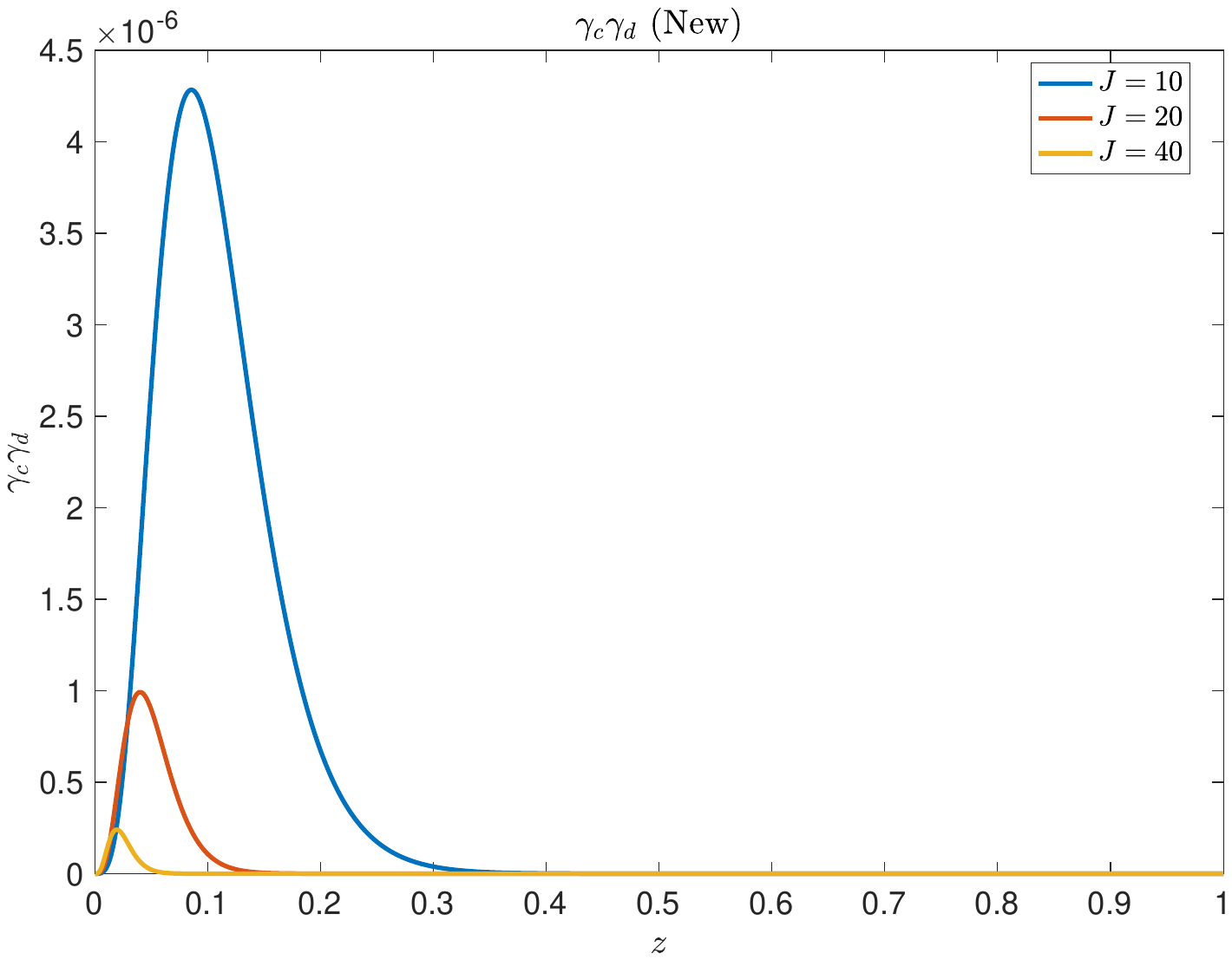}
    \end{subfigure}
    \caption{The graph of $\gamma_c(z)\gamma_d(z)$ when BE is chosen as the CP. Left: Maday's update; Right: New update.}
    \label{fig:gamma_cd_BE}
\end{figure}

\begin{figure}[htbp]
    \centering
    \begin{subfigure}[b]{0.48\textwidth}
\includegraphics[width=\linewidth]{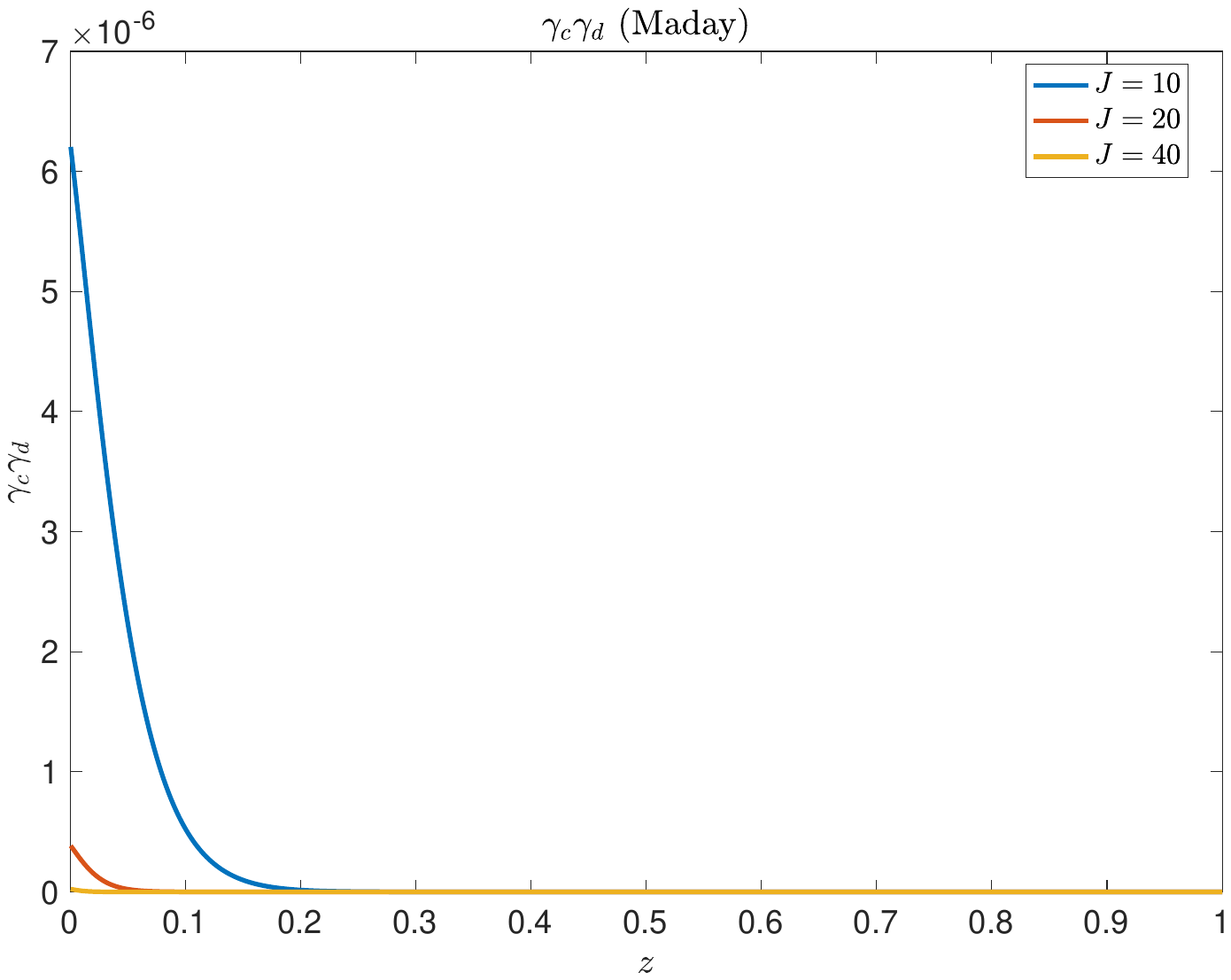}
    \end{subfigure}
    \begin{subfigure}[b]{0.48\textwidth}
        \includegraphics[width=\linewidth]{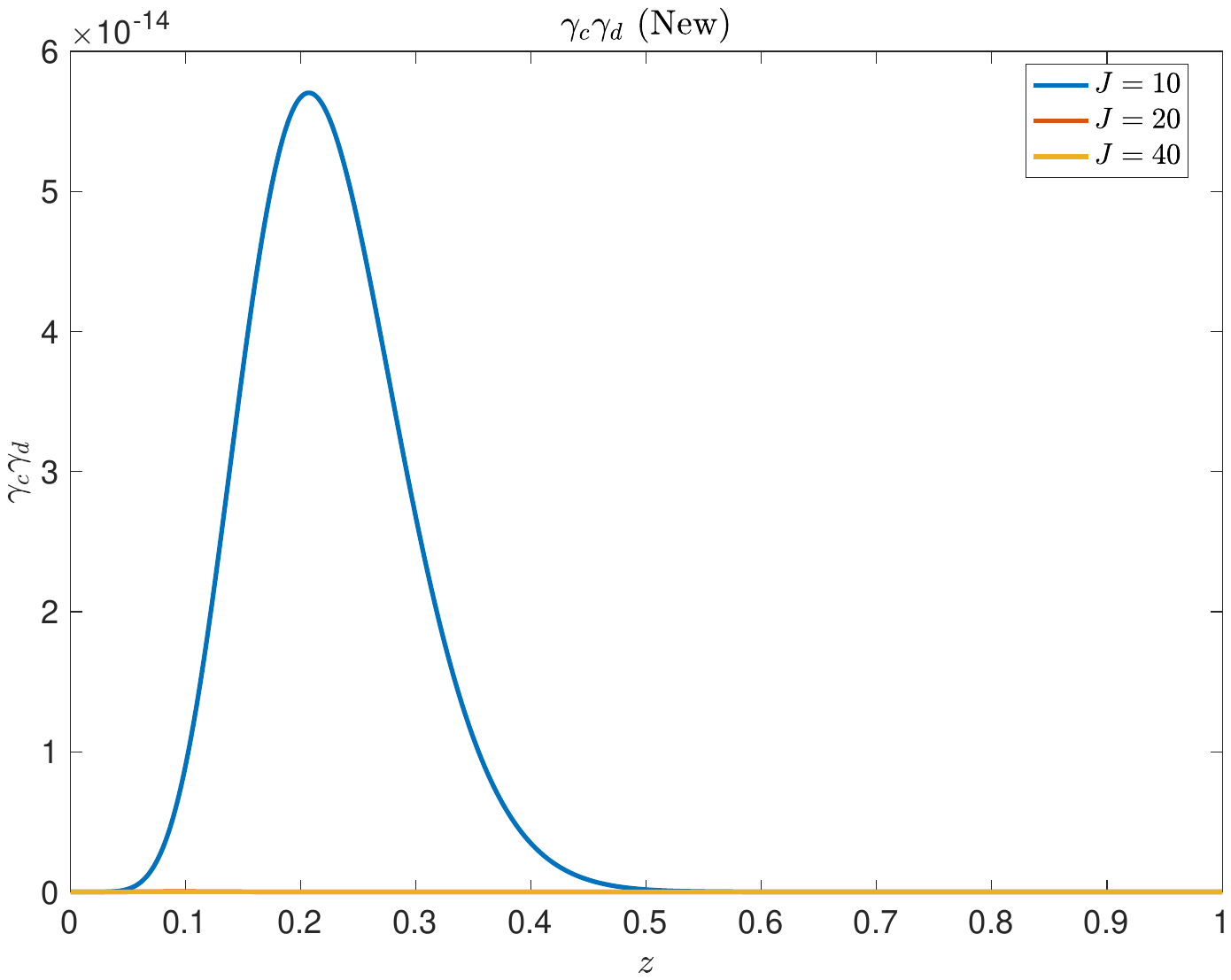}
    \end{subfigure}
    \caption{The graph of $\gamma_c(z)\gamma_d(z)$ when the exact solver is chosen as the CP. Left: Maday's update; Right: New update.}
    \label{fig:gamma_cd_Exact}
\end{figure}

\subsubsection{$\gamma_{c,p}\gamma_{d,p} \gamma_{e,p}$}
The expression of $\gamma_{c,p}\gamma_{d,p} \gamma_{e,p}$ is given in \eqref{eqn:g_c} and \eqref{eqn:g_d,d_e}.
\begin{figure}[htbp]
    \centering
    \begin{subfigure}[b]{0.48\textwidth}
\includegraphics[width=\linewidth]{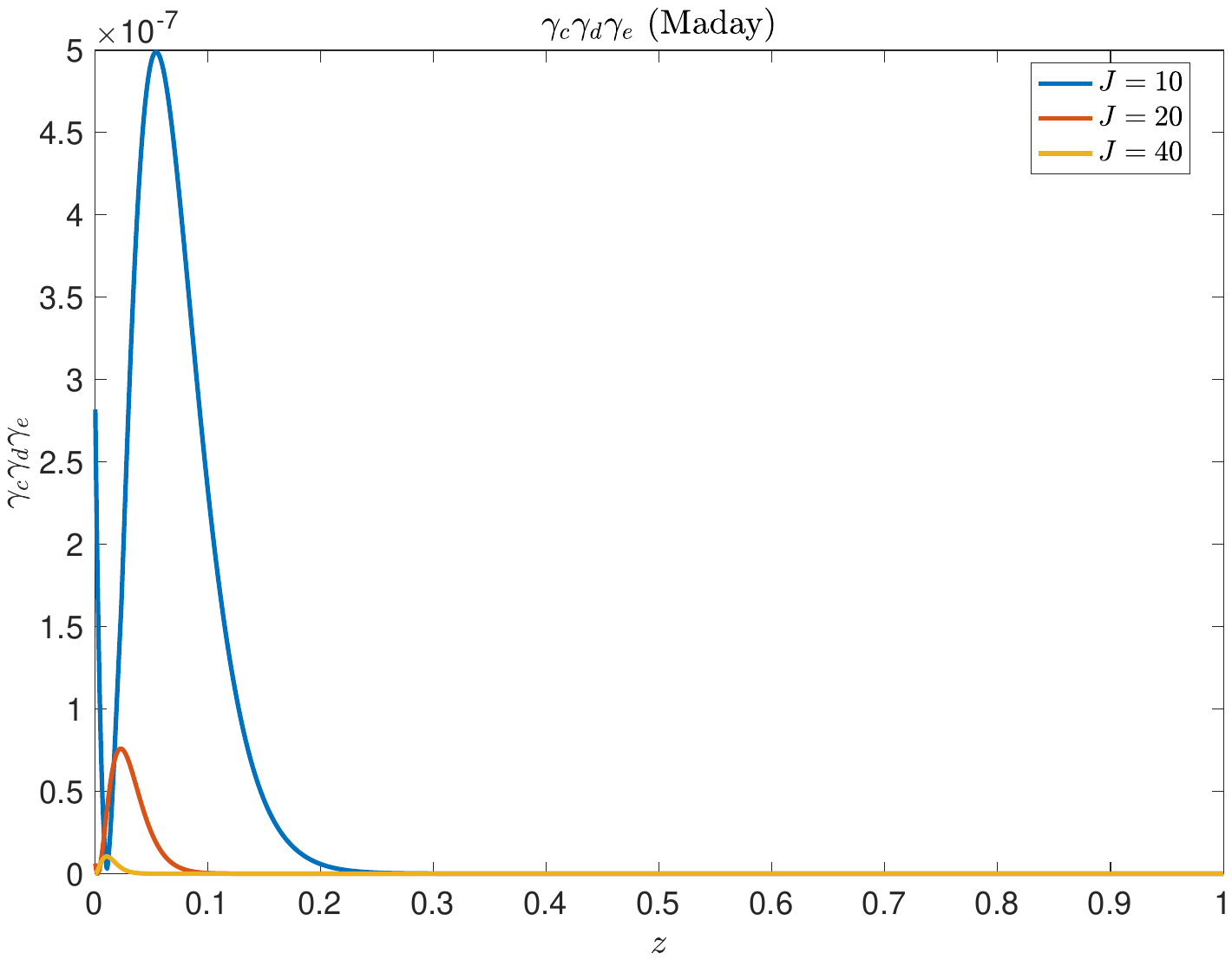}
    \end{subfigure}
    \begin{subfigure}[b]{0.48\textwidth}
        \includegraphics[width=\linewidth]{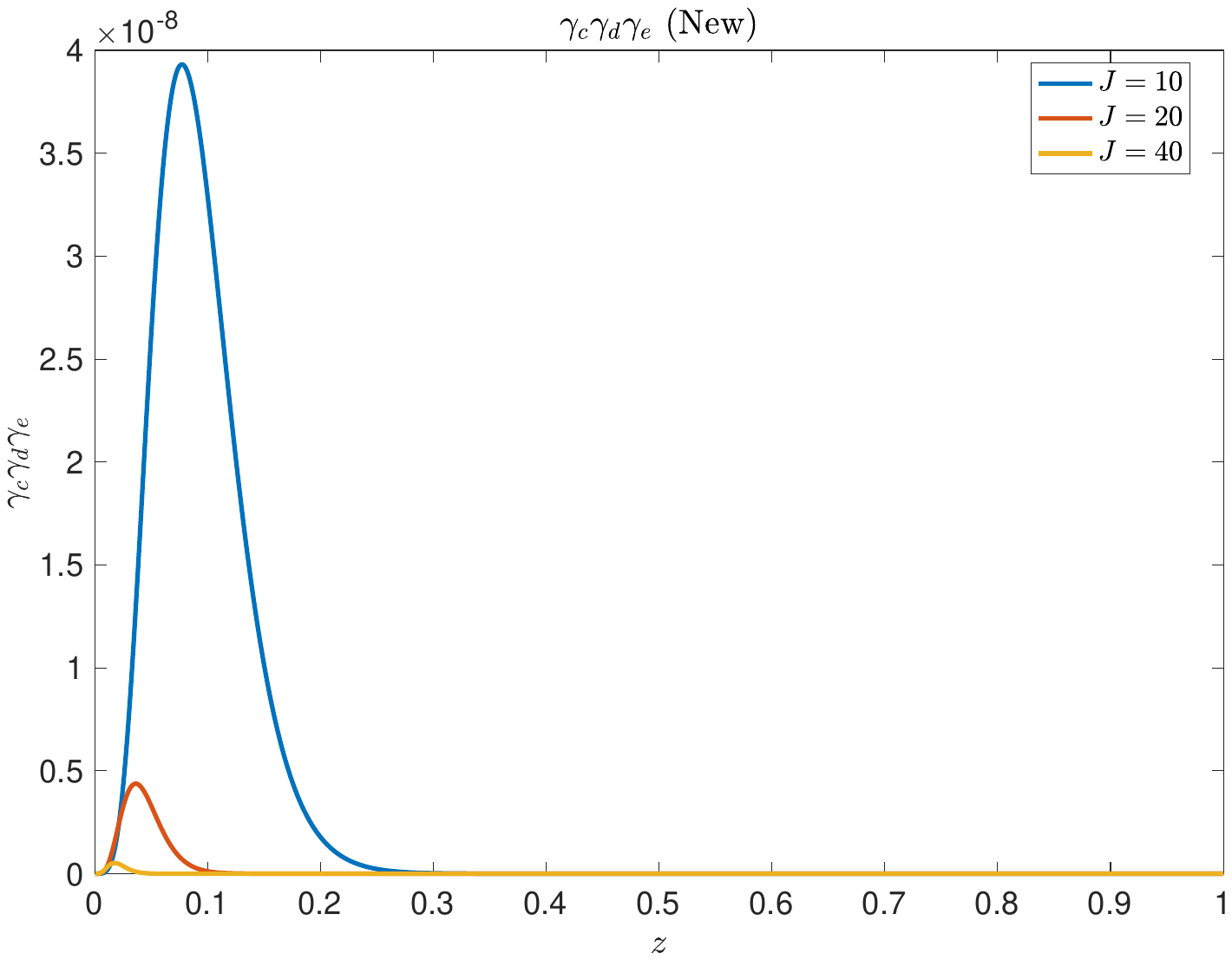}
    \end{subfigure}
    \caption{The graph of $\gamma_c(z)\gamma_d(z)\gamma_e(z)$ when BE is chosen as the CP. Left: Maday's update; Right: New update.}
    \label{fig:gamma_cde_BE}
\end{figure}

\begin{figure}[htbp]
    \centering
    \begin{subfigure}[b]{0.48\textwidth}
\includegraphics[width=\linewidth]{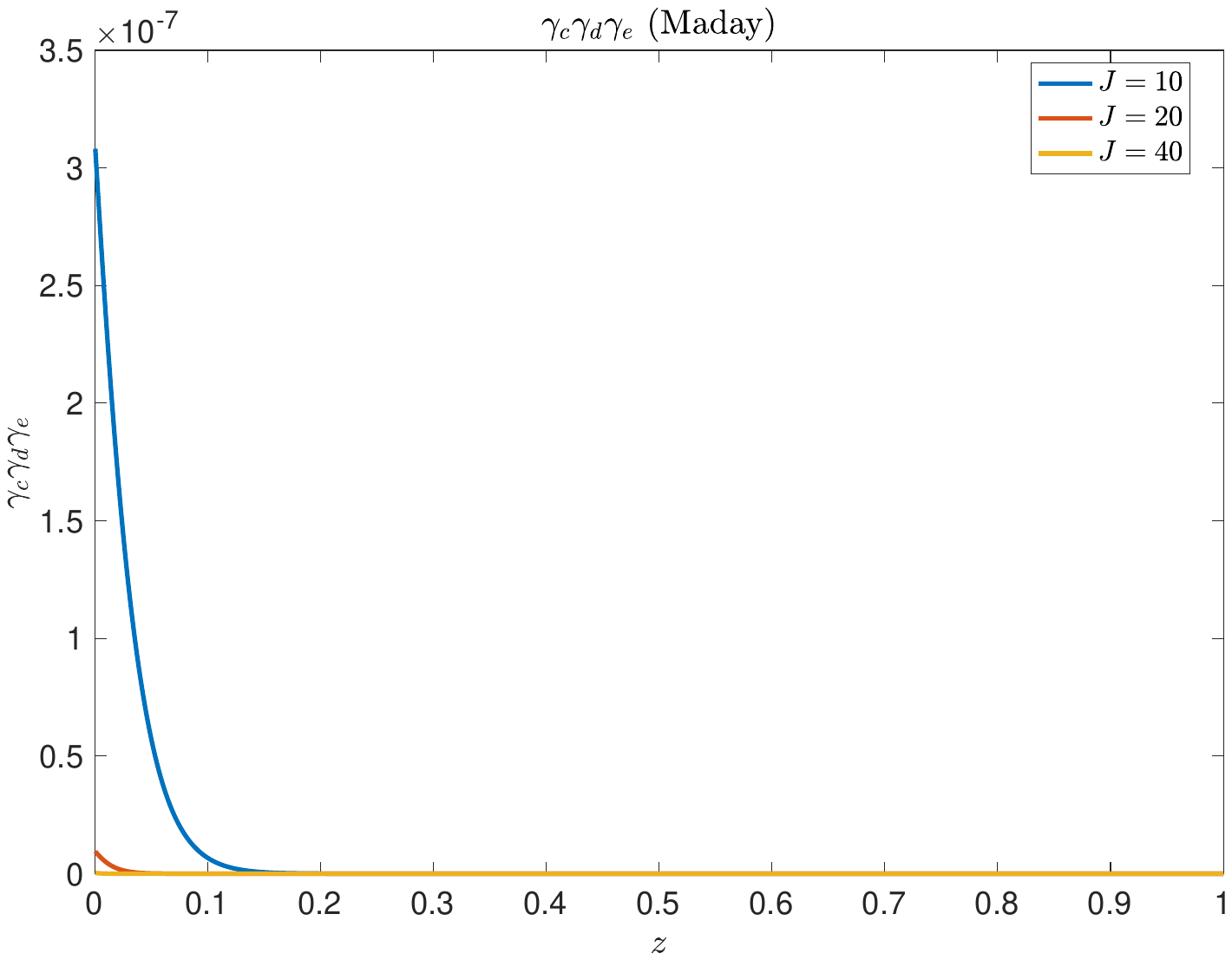}
    \end{subfigure}
    \begin{subfigure}[b]{0.48\textwidth}
        \includegraphics[width=\linewidth]{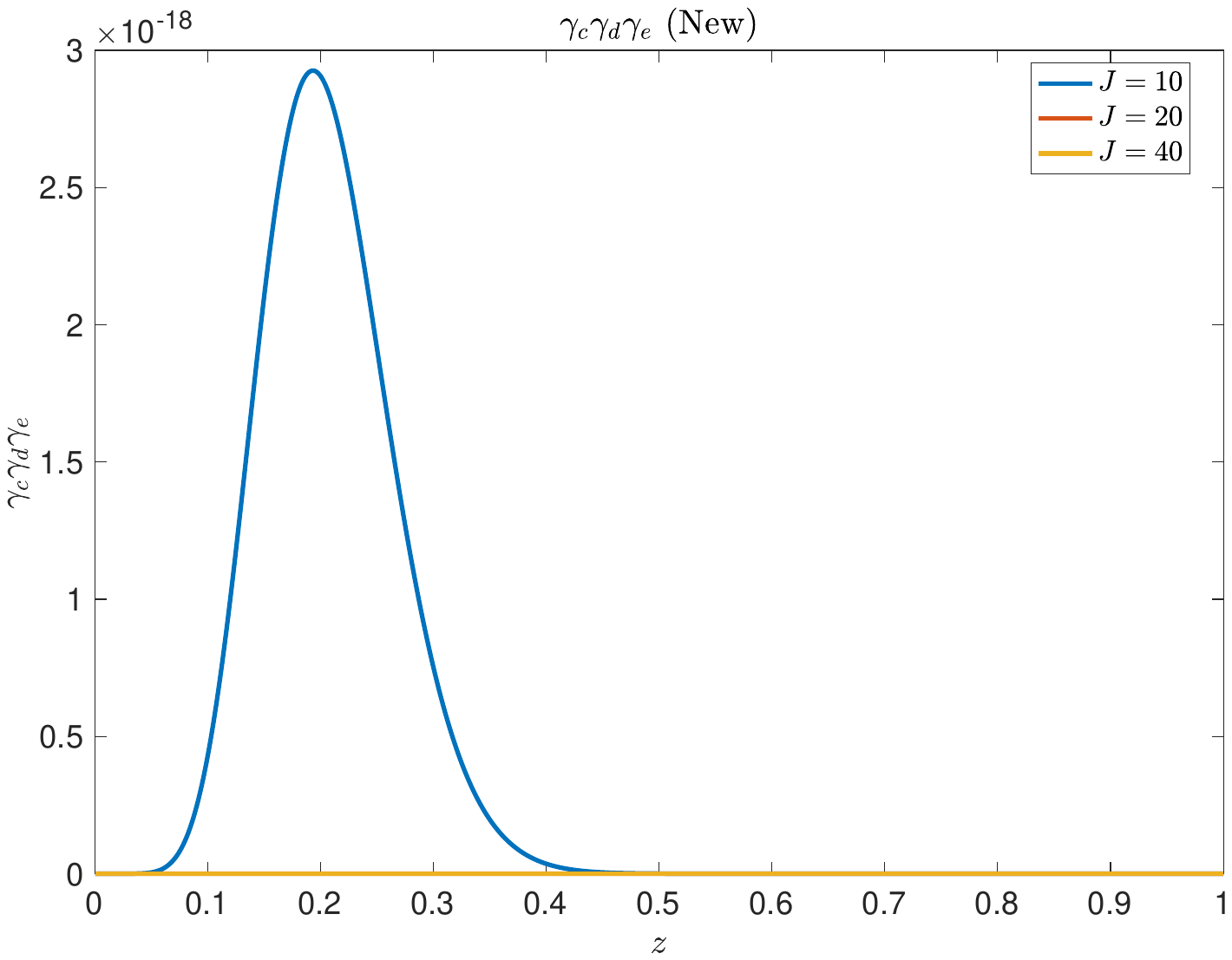}
    \end{subfigure}
    \caption{The graph of $\gamma_c(z)\gamma_d(z) \gamma_e (z)$ when the exact solver is chosen as the CP. Left: Maday's update; Right: New update.}
    \label{fig:gamma_cde_Exact}
\end{figure}

\bibliographystyle{siamplain}
\bibliography{references}